\theoremstyle{plain}
\newtheorem{theorem}{Theorem}[section]
\newtheorem{lemma}{Lemma}[section]
\newtheorem{proposition}{Proposition}[section]
\newtheorem{corollary}{Corollary}[section]
\theoremstyle{definition}
\newtheorem{definition}{Definition}[section]
\newtheorem{question}{Question}
\numberwithin{equation}{section}
\newtheorem{remark}{Remark}
\journal{}
\newcommand{\hollowstar}{\text{\ding{78}}}
\newcommand{\fivestar}{\text{\ding{79}}}
\begin{document}

\begin{frontmatter}
\title{On the existence of $t_r$-norm and $t_r$-conorm not in
convolution form\tnoteref{mytitlenote}}
\tnotetext[mytitlenote]{This work was supported by National Natural Science Foundation of China
(No. 11601449), Yibin University Cultivating Foundation (No. 02180199),
Science and Technology Innovation Team of Education Department of Sichuan for
Dynamical System and its Applications (No. 18TD0013), Youth Science and Technology Innovation
Team of Southwest Petroleum University for Nonlinear Systems (No. 2017CXTD02), and
Key Natural Science Foundation of Universities in Guangdong Province (No. 2019KZDXM027).}

\author[a1,a2,a3]{Xinxing Wu\corref{mycorrespondingauthor}}
\cortext[mycorrespondingauthor]{Corresponding author}
\address[a1]{School of Mathematics and Statistics, Guizhou University of Finance and Economics, Guiyang 550025, China}
\address[a2]{Institute for Artificial Intelligence, Southwest Petroleum University, Chengdu, Sichuan 610500, China}
\address[a3]{Zhuhai College of Jilin University, Zhuhai, Guangdong 519041, China}
\ead{wuxinxing5201314@163.com}

\author[a4]{Guanrong Chen}
\address[a4]{Department of Electrical Engineering, City University of Hong Kong,
Hong Kong SAR, China}
\ead{gchen@ee.cityu.edu.hk}

\author[a3]{Lidong Wang}
\ead{wld0707@126.com}

\begin{abstract}
This paper constructs a $t_{r}$-norm and a $t_{r}$-conorm on the set of all normal and
convex functions from ${[0, 1]}$ to ${[0, 1]}$, which are not obtained by using the following
two formulas on binary operations
${\curlywedge}$ and ${\curlyvee}$:
$$
{(f\curlywedge g)(x)=\sup\left\{f(y)\ast g(z)\mid y\vartriangle z=x\right\},}
$$
$$
{(f\curlyvee g)(x)=\sup\left\{f(y)\ast g(z)\mid y\ \triangledown\ z=x\right\},}
$$
where ${f, g\in Map([0, 1], [0, 1])}$, ${\vartriangle}$ and ${\triangledown}$ are respectively
a ${t}$-norm and a ${t}$-conorm on ${[0, 1]}$, and ${\ast}$ is a binary operation on ${[0, 1]}$.
{\color{blue}This result answers affirmatively an open problem posed in \cite{HCT2015}. Moreover, the duality
between $t_r$-norms and  $t_r$-conorms is obtained by the introduction of operations dual to binary
operations on ${Map([0, 1], [0, 1])}$.}
\end{abstract}

\begin{keyword}
Normal and convex function, $t$-norm, $t$-conorm, $t_{r}$-norm, $t_r$-conorm, type-2 fuzzy set.
\end{keyword}

\end{frontmatter}

\section{Introduction}
In 1975, Zadeh~\cite{Z1975} introduced the notion of type-2 fuzzy sets (T2FSs), that is,
a fuzzy set with fuzzy sets as truth values (simply, ``fuzzy-fuzzy sets''), as an extension
of type-1 fuzzy sets (FSs) and interval-valued fuzzy sets (IVFSs), which was then equivalently
expressed in different forms by Mendel et al. \cite{KM2001,KM2001a,M2001,MJ2002}.
{\color{blue}The definitions of triangular norms (briefly $t$-norms) and triangular conorms
(briefly $t$-conorms) on the real unit interval were introduced by Schweizer
and Sklar \cite{SS1961} in the framework of probabilistic metric spaces.
These definitions exploits the main idea of Menger \cite{Me1942} that extends
the classical triangle inequality in metric spaces to probabilistic metric
spaces.} In 2006, Walker and Walker \cite{WW2006} extended $t$-norm and $t$-conorm on $I$ to the algebra
of truth values on T2FSs and IVFSs. Then, Hern\'{a}ndez et al.~\cite{HCT2015} modified Walker
and Walker's definition and introduced the notions of {\color{blue}a $t_{r}$-norm and a $t_{r}$-conorm}
by adding some ``restrictive axioms'' (see Definition~\ref{Def-1} below). In particular,
{\color{blue}in \cite{HCT2015} they proved} that the binary operation $\curlywedge$ (resp., $\curlyvee$)
on the set $\mathbf{L}$ of all normal and convex functions is a $t_{r}$-norm (resp., a $t_{r}$-conorm).
They also proposed the following two open problems on the binary operations $\curlywedge$ and
$\curlyvee$ (see Definition~\ref{HCT-Def} below).

\begin{question}\label{Q-2}\cite{HCT2015}
{\it Apart from the $t$-norm, does there exist other binary operation
`$\ast$' on $I$ such that `$\curlywedge$' and `$\curlyvee$' are, respectively,
a $t_r$-norm and a $t_r$-conorm on $\mathbf{L}$?}
\end{question}

\begin{question}\label{Q-1}\cite{HCT2015}
{\it Determine other binary operations, which
are not obtained using the formulas given for the operations `$\curlywedge$'
and `$\curlyvee$', that are either a $t_r$-norm or a $t_r$-conorm on $\mathbf{L}$.}
\end{question}

Recently, {\color{blue}in \cite{WC-TFS} we have} answered negatively Question~\ref{Q-2},
proving that, if a binary operation $\ast$ ensures $\curlywedge$ be a $t_{r}$-norm on $\mathbf{L}$
or $\curlyvee$ be a $t_{r}$-conorm on $\mathbf{L}$, then $\ast$ is a $t$-norm. This paper is
devoted to solving Question~\ref{Q-1} by constructing a $t_r$-norm `$\hollowstar$'
(see Section~\ref{S-V}) and a $t_r$-conorm `$\fivestar$' (see Section~\ref{S-VI}) on $\mathbf{L}$,
which cannot be obtained by the formulas defining the operations `$\curlywedge$'
and `$\curlyvee$'.

\section{Preliminaries and basic concepts}
Throughout this paper, let $I=[0, 1]$, $Map(X, Y)$ be the set of all mappings from $X$ to $Y$,
and `$\leq$' denote the usual order relation in the lattice of real numbers, with $\mathbf{M}=Map(I, I)$.
Let $\vee$ and $\wedge$ be the maximum and minimum operations, respectively, on lattice $I$.

\begin{definition}\cite{Z1965}
A {\it type-1 fuzzy set} $A$ in space $X$ is a mapping from $X$ to $I$, i.e.,
$A\in Map(X, I)$.
\end{definition}

\begin{definition}\cite{WW2005}
A {\it type-2 fuzzy set} $A$ in space $X$ is a mapping
$$
A: X\rightarrow \mathbf{M},
$$
i.e., $A\in Map(X, \mathbf{M})$.
\end{definition}

{\color{blue}\begin{definition}\cite{WW2005}
A function $f\in \mathbf{M}$ is
\begin{itemize}
  \item[(1)] {\it normal} if $\sup\{f(x)\mid x\in I\}=1$;
  \item[(2)] {\it convex} if, for any $0\leq x\leq y\leq z\leq 1$, $f(y)\geq f(x)\wedge f(z)$.
\end{itemize}
\end{definition}
}

Let $\mathbf{N}$ and $\mathbf{L}$ denote the set of all normal functions in $\mathbf{M}$ and
the set of all normal and convex functions in $\mathbf{M}$, respectively.

For any subset $B$ of $X$, a special fuzzy set $\bm{1}_{B}$, called the {\it characteristic function}
of $B$, is defined by
$$
\bm{1}_{B}(x)=\begin{cases}
1, & x\in B,\\
0, & x\in X\backslash B.
\end{cases}
$$
Let
$\mathbf{J}=\{\bm{1}_{\{x\}}\mid x\in I\}$ and $\mathbf{K}=\{\bm{1}_{[a, b]}\mid 0\leq a \leq b\leq 1\}$.

\begin{definition}\cite{KMP2000}
A binary operation $\ast: I^{2}\rightarrow I$ is a {\it $t$-norm} on $I$ if it satisfies the following axioms:
\begin{enumerate}
  \item[(T1)] ({\it commutativity}) $x\ast y=y\ast x$ for $x, y\in I$;
  \item[(T2)] ({\it associativity}) $(x \ast y) \ast z=x\ast (y\ast z)$ for $x, y, z\in I$;
  \item[(T3)] ({\it monotonicity}) $\ast$ is increasing in each argument;
  \item[(T4)] ({\it neutral element}) $1\ast x=x\ast 1=x$ for $x\in I$.
\end{enumerate}
A binary operation $\ast: I^2\rightarrow I$ is a
{\it $t$-conorm} on $I$ if it satisfies axioms (T1), (T2), and (T3) above;
and axiom (T4'): $0\ast x=x\ast 0=x$ for $x\in I$.
\end{definition}


\begin{definition}\cite{HCT2015}\label{HCT-Def}
Let $\ast$ be a binary operation on $I$, $\vartriangle$ be a
$t$-norm on $I$, and $\triangledown$ be a $t$-conorm on $I$. Define the binary operations
$\curlywedge$ and $\curlyvee: \mathbf{M}^2\rightarrow \mathbf{M}$ as follows: for $f, g\in \mathbf{M}$,
\begin{equation}\label{O-1}
(f\curlywedge g)(x)=\sup\left\{f(y)\ast g(z)\mid y\vartriangle z =x\right\},
\end{equation}
and
\begin{equation}\label{O-2}
(f\curlyvee g)(x)=\sup\left\{f(y)\ast g(z)\mid y\ \triangledown\ z =x\right\}.
\end{equation}
\end{definition}

\begin{definition}\cite{WW2005}\label{Def-7}
The operations of $\sqcup$ ({\it union}), $\sqcap$ ({\it intersection}),
$\neg$ ({\it complementation}) on $\mathbf{M}$ are
defined as follows: for $f, g\in \mathbf{M}$,
\begin{align*}
(f\sqcup g)(x)&=\sup\{f(y)\wedge g(z)\mid y\vee z=x\},\\
(f\sqcap g)(x)&=\sup\{f(y)\wedge g(z)\mid y\wedge z=x\},
\end{align*}
and
$$
(\neg f)(x)=\sup\{f(y)\mid 1-y=x\}=f(1-x).
$$
\end{definition}

From \cite{WW2005}, it follows that
$\mathfrak{M}=(\mathbf{M}, \sqcup, \sqcap, \neg, \bm{1}_{\{0\}}, \bm{1}_{\{1\}})$
does not have a lattice structure, although $\sqcup$ and $\sqcap$ satisfy the
De Morgan's laws with respect
to the complementation $\neg$.

Walker and Walker \cite{WW2005} introduced the following partial orders $\sqsubseteq$
and $\preccurlyeq$ on $\mathbf{M}$.
\begin{definition}\cite{WW2005}
$f\sqsubseteq g$ if $f\sqcap g=f$; $f\preccurlyeq g$ if $f\sqcup g=g$.
\end{definition}

It follows from \cite[Proposition 14]{WW2005} that both $\sqsubseteq$ and $\preccurlyeq$ are partial
orders on $\mathbf{M}$. Generally, the partial orders $\sqsubseteq$ and $\preccurlyeq$ do not coincide.
{\color{blue}In \cite{HWW2010,HWW2008,WW2005}}, it was proved that $\sqsubseteq$ and $\preccurlyeq$ coincide on $\mathbf{L}$,
and the subalgebra $\mathfrak{L}=(\mathbf{L}, \sqcup, \sqcap, \neg, \bm{1}_{\{0\}}, \bm{1}_{\{1\}})$
is a bounded complete lattice. In particular, $\bm{1}_{\{0\}}$ and $\bm{1}_{\{1\}}$
are {\color{blue}the minimum and the maximum} of $\mathfrak{L}$, respectively.

\begin{definition}\cite{HCT2015}\label{Def-1}
A binary operation $\widetilde{T}: \mathbf{L}^2 \rightarrow \mathbf{L}$ is a {\it $t_{r}$-norm}
($t$-norm according to the restrictive axioms), if
\begin{itemize}
  \item[(O1)] $\widetilde{T}$ is commutative, i.e., $\widetilde{T}(f, g)=\widetilde{T}(g, f)$ for $f, g\in \mathbf{L}$;
  \item[(O2)] $\widetilde{T}$ is associative, i.e., $\widetilde{T}(\widetilde{T}(f, g), h)=\widetilde{T}(f, \widetilde{T}(g, h))$
  for $f, g, h\in \mathbf{L}$;
  \item[(O3)] $\widetilde{T}(f, \bm{1}_{\{1\}})=f$ for $f\in \mathbf{L}$ (neutral element);
  \item[(O4)] $\widetilde{T}$ is increasing, i.e., for $f, g, h\in \mathbf{L}$ such that $f\sqsubseteq g$,
  $\widetilde{T}(f, h)\sqsubseteq \widetilde{T}(g, h)$;
  \item[(O5)] $\widetilde{T}(\bm{1}_{[0, 1]}, \bm{1}_{[a, b]})=\bm{1}_{[0, b]}$;
  \item[(O6)] $\widetilde{T}$ is closed on $\mathbf{J}$;
  \item[(O7)] $\widetilde{T}$ is closed on $\mathbf{K}$.
\end{itemize}
A binary operation $\widetilde{S}: \mathbf{L}^2\rightarrow \mathbf{L}$ is a
{\it $t_r$-conorm} if it satisfies axioms (O1), (O2), (O4), (O6), and (O7) above;
axiom (O3'): $\widetilde{S}(f, \bm{1}_{\{0\}})=f$; and axiom (O5'):
$\widetilde{S}(\bm{1}_{[0, 1]}, \bm{1}_{[a, b]})=\bm{1}_{[a, 1]}$. Axioms (O1), (O2), (O3), (O3'),
and (O4) are called ``{\it basic axioms}'', and an operation that complies with
these axioms will be referred to as {\it $t$-norm} and {\it $t$-conorm}, respectively.
\end{definition}

\begin{definition}
For $f\in \mathbf{M}$, define
\begin{align*}
f^{L}(x)&=\sup\{f(y)\mid y\leq x\},\\
f^{L_{\mathrm{w}}}(x)&=\begin{cases}
\sup\{f(y)\mid y< x\}, & x\in (0, 1], \\
f(0), & x=0,
\end{cases}
\end{align*}
and
\begin{align*}
f^{R}(x)&=\sup\{f(y)\mid y\geq x\},\\
f^{R_{\mathrm{w}}}(x)&=\begin{cases}
\sup\{f(y)\mid y> x\}, & x\in [0, 1), \\
f(1), & x=1.
\end{cases}
\end{align*}
\end{definition}
Clearly, (1) $f^L$, $f^{L_{\mathrm{w}}}$ and $f^R$, $f^{R_{\mathrm{w}}}$ are {\color{blue}increasing} and
decreasing, respectively; (2) $f^{L}(x)\vee f^{R}(x)=f^{L}(x)\vee f^{R_{\mathrm{w}}}(x)=\sup_{z\in I}\{f(z)\}$ and
$f^{R}(x)\vee f^{L_{\mathrm{w}}}(x)=\sup_{z\in I}\{f(z)\}$ for all $x\in I$. The following properties of $f^{L}$
and $f^{R}$ {\color{blue}were obtained} by Walker et al. \cite{HWW2010,HWW2008,WW2005}.
\begin{proposition}{\rm \cite{WW2005}}\label{Main-Prop}
For $f, g\in \mathbf{M}$,
\begin{enumerate}[{\rm (1)}]
  \item $f\leq f^{L}\wedge f^{R}$;
  \item $(f^{L})^{L}=f^{L}$, $(f^{R})^{R}=f^{R}$;
  \item {\color{blue}$(\neg f)^{L}=\neg (f^{R})$, $(\neg f)^{R}=\neg (f^{L})$; }
  \item $(f^{L})^{R}=(f^{R})^{L}=\sup_{x\in I} \{f(x)\}$;
  \item $f\sqsubseteq g$ if and only if $f^{R}\wedge g \leq f\leq g^{R}$;
  \item $f\preccurlyeq g$ if and only if $f\wedge g^{L}\leq g\leq f^{L}$;
  \item $f$ is convex if and only if $f=f^{L}\wedge f^{R}$.
\end{enumerate}
\end{proposition}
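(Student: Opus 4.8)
The plan is to treat the seven items in three groups according to difficulty. Items (1)--(4) are immediate consequences of the definitions together with the monotonicity recorded just before the statement. For (1), since $x\leq x$ and $x\geq x$, the point $y=x$ competes in both defining suprema, so $f(x)\leq f^{L}(x)$ and $f(x)\leq f^{R}(x)$, giving $f\leq f^{L}\wedge f^{R}$. For (2), because $f^{L}$ is increasing, the supremum defining $(f^{L})^{L}(x)$ over $y\leq x$ is attained at $y=x$, so $(f^{L})^{L}=f^{L}$; dually for $f^{R}$. For (3), I would substitute $w=1-y$ into the defining suprema: $(\neg f)^{L}(x)=\sup\{f(1-y)\mid y\leq x\}=\sup\{f(w)\mid w\geq 1-x\}=f^{R}(1-x)=\neg(f^{R})(x)$, and symmetrically for the second identity. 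For (4), monotonicity forces $(f^{L})^{R}(x)=\sup_{y\geq x}f^{L}(y)=f^{L}(1)=\sup_{z}f(z)$ and $(f^{R})^{L}(x)=\sup_{y\leq x}f^{R}(y)=f^{R}(0)=\sup_{z}f(z)$.

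The crux, and the step I expect to be the main obstacle, is a pointwise representation of $\sqcap$ and $\sqcup$ through the $L$- and $R$-operators, from which (5) and (6) will follow mechanically. Decomposing the constraint $y\wedge z=x$ into the two cases $y=x,\ z\geq x$ and $z=x,\ y\geq x$, and justifying that the supremum over $\{(y,z):y\wedge z=x\}$ genuinely splits as the resulting finite maximum, yields
\begin{equation*}
f\sqcap g=(f\wedge g^{R})\vee(f^{R}\wedge g),
\end{equation*}
and, decomposing $y\vee z=x$ symmetrically,
\begin{equation*}
f\sqcup g=(f\wedge g^{L})\vee(f^{L}\wedge g).
\end{equation*}

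With these two identities in hand, (5) becomes routine. If $f\sqsubseteq g$, i.e. $f\sqcap g=f$, then $f^{R}\wedge g\leq f$ is read off directly, while $f\leq g^{R}$ follows because at any point with $f(x)>g^{R}(x)$ both joinands of $f\sqcap g$ would be strictly below $f(x)$ (using $g\leq g^{R}$), contradicting $f\sqcap g=f$. Conversely, $f^{R}\wedge g\leq f\leq g^{R}$ gives $f\wedge g^{R}=f$ and $f^{R}\wedge g\leq f$, whence $f\sqcap g=f$. Item (6) is the exact dual, replacing $\sqcap$ by $\sqcup$, $g^{R}$ by $g^{L}$, and $f^{R}$ by $f^{L}$, using the characterization $f\preccurlyeq g\iff f\sqcup g=g$.

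Finally, for (7) I would combine (1) with the convexity inequality. Assuming $f$ convex, fix $y$ and choose, for each $\varepsilon>0$, points $x\leq y$ and $z\geq y$ with $f(x)>f^{L}(y)-\varepsilon$ and $f(z)>f^{R}(y)-\varepsilon$; convexity on $x\leq y\leq z$ gives $f(y)\geq f(x)\wedge f(z)>(f^{L}(y)\wedge f^{R}(y))-\varepsilon$, so letting $\varepsilon\to 0$ yields $f\geq f^{L}\wedge f^{R}$, which with (1) gives $f=f^{L}\wedge f^{R}$. Conversely, if $f=f^{L}\wedge f^{R}$ then for $x\leq y\leq z$ the defining suprema give $f(x)\leq f^{L}(y)$ and $f(z)\leq f^{R}(y)$, hence $f(y)=f^{L}(y)\wedge f^{R}(y)\geq f(x)\wedge f(z)$, i.e. $f$ is convex.
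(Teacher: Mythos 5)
Your proof is correct. Note that the paper itself gives no argument for this proposition --- it is quoted verbatim from Walker and Walker \cite{WW2005} --- so there is nothing in the text to compare against; but the route you take, reducing (5) and (6) to the pointwise identities $f\sqcap g=(f\wedge g^{R})\vee(f^{R}\wedge g)$ and $f\sqcup g=(f\wedge g^{L})\vee(f^{L}\wedge g)$ obtained by splitting the constraint $y\wedge z=x$ (resp.\ $y\vee z=x$) into its two cases and pulling the constant through the supremum, is exactly the standard derivation in that reference, and your handling of (1)--(4) and (7) is the expected one.
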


\begin{theorem}{\rm \cite{HWW2010,HWW2008}}\label{order-theorem}
Let $f, g\in \mathbf{L}$. Then, $f\sqsubseteq g$ if and only if
$f^L\geq g^L$ and $f^R\leq g^R$.
\end{theorem}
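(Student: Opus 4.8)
The plan is to derive the characterization from Proposition~\ref{Main-Prop}(5), which already encodes $\sqsubseteq$ as a single inequality on each side: $f\sqsubseteq g$ if and only if $f^{R}\wedge g\leq f$ and $f\leq g^{R}$. Since $f,g\in\mathbf{L}$ are normal and convex, it therefore suffices to show that the pair $f^{R}\wedge g\leq f\leq g^{R}$ is equivalent to the target pair $f^{L}\geq g^{L}$ and $f^{R}\leq g^{R}$. I would establish this by treating the two inequalities separately and using convexity (Proposition~\ref{Main-Prop}(7)) to rewrite both $f$ and $g$ through their $L$- and $R$-transforms.

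First I would dispose of the right-hand inequality by showing $f\leq g^{R}$ is equivalent to $f^{R}\leq g^{R}$: applying the monotone $R$-transform to $f\leq g^{R}$ and invoking $(g^{R})^{R}=g^{R}$ from Proposition~\ref{Main-Prop}(2) gives $f^{R}\leq g^{R}$, while conversely $f\leq f^{R}\leq g^{R}$ by Proposition~\ref{Main-Prop}(1). This already yields one target inequality and reduces the problem to proving that, under the standing hypothesis $f^{R}\leq g^{R}$, the inequality $f^{R}\wedge g\leq f$ is equivalent to $f^{L}\geq g^{L}$. Here I would use $f=f^{L}\wedge f^{R}$ and $g=g^{L}\wedge g^{R}$ together with $f^{R}\leq g^{R}$ to compute $f^{R}\wedge g=f^{R}\wedge g^{L}\wedge g^{R}=f^{R}\wedge g^{L}$, so the inequality $f^{R}\wedge g\leq f$ becomes $f^{R}\wedge g^{L}\leq f^{L}\wedge f^{R}$. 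The easy direction is then immediate: if $g^{L}\leq f^{L}$, then $f^{R}\wedge g^{L}\leq f^{R}\wedge f^{L}=f$.

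The converse direction is the only genuinely delicate step, and it is exactly where normality must enter. I would argue by contradiction: assume $f^{R}\wedge g^{L}\leq f^{L}$ holds pointwise but $g^{L}(x_{0})>f^{L}(x_{0})$ for some $x_{0}$. Then $\min\{f^{R}(x_{0}),g^{L}(x_{0})\}\leq f^{L}(x_{0})<g^{L}(x_{0})$, which rules out $g^{L}(x_{0})$ being the minimum and forces $f^{R}(x_{0})\leq f^{L}(x_{0})$. Combining this with the identity $f^{L}(x_{0})\vee f^{R}(x_{0})=\sup_{z\in I}f(z)=1$ (normality of $f$) gives $f^{L}(x_{0})=1$, which contradicts $g^{L}(x_{0})>f^{L}(x_{0})$ since $g^{L}\leq 1$. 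Hence $f^{L}\geq g^{L}$, and assembling the equivalences completes the proof. The crux is this final contradiction: convexity lets me compress $f$ and $g$ into the lattice pair $(f^{L},f^{R})$ by routine manipulation, but it is normality, through $f^{L}\vee f^{R}\equiv 1$, that eliminates the competing case in which the pointwise minimum is realized by $f^{R}$ rather than by $g^{L}$.
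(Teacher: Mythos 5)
The paper does not prove this theorem at all: it is imported verbatim from \cite{HWW2010,HWW2008} as a quoted result, so there is no internal proof to compare against. Your argument is a correct, self-contained derivation from facts the paper does record, which is genuinely more than the paper offers. The reduction of Proposition~\ref{Main-Prop}(5) to the pair $f^{R}\leq g^{R}$, $f^{R}\wedge g^{L}\leq f^{L}\wedge f^{R}$ is sound: monotonicity of the $R$-transform together with $(g^{R})^{R}=g^{R}$ and $f\leq f^{R}$ handles the right-hand inequality, and convexity ($f=f^{L}\wedge f^{R}$, $g=g^{L}\wedge g^{R}$) plus the standing hypothesis $f^{R}\leq g^{R}$ correctly collapses $f^{R}\wedge g$ to $f^{R}\wedge g^{L}$. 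The contradiction step is also right: from $\min\{f^{R}(x_{0}),g^{L}(x_{0})\}\leq f^{L}(x_{0})<g^{L}(x_{0})$ the minimum must be $f^{R}(x_{0})$, whence $f^{R}(x_{0})\leq f^{L}(x_{0})$ and normality ($f^{L}\vee f^{R}\equiv 1$) forces $f^{L}(x_{0})=1$, contradicting $g^{L}(x_{0})>f^{L}(x_{0})$. You correctly identify that convexity alone gives only the lattice bookkeeping and that normality is the ingredient that closes the converse; this matches the hypotheses $f,g\in\mathbf{L}$ exactly, and every auxiliary fact you invoke is available in Proposition~\ref{Main-Prop}. No gaps.
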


{\color{blue}The following result follows from the definitions of $f^{L}$ and $f^{R}$.}

\begin{lemma}\label{L-R}
For $f\in \mathbf{M}$, $f^{L}(1)=f^{R}(0)=\sup_{x\in I}\{f(x)\}$.
\end{lemma}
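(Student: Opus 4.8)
The plan is to read off both equalities directly from the definitions of $f^{L}$ and $f^{R}$ by evaluating them at the two endpoints of $I$. First I would substitute $x=1$ into $f^{L}(x)=\sup\{f(y)\mid y\leq x\}$. Since the domain under consideration is $I=[0,1]$, the constraint $y\leq 1$ imposes no restriction at all, so the index set $\{y\in I\mid y\leq 1\}$ coincides with the whole of $I$. Hence $f^{L}(1)=\sup\{f(y)\mid y\in I\}=\sup_{x\in I}\{f(x)\}$.

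Symmetrically, I would substitute $x=0$ into $f^{R}(x)=\sup\{f(y)\mid y\geq x\}$. Because $0$ is the bottom element of $I$, the constraint $y\geq 0$ is vacuous, so $\{y\in I\mid y\geq 0\}=I$ as well, and therefore $f^{R}(0)=\sup_{x\in I}\{f(x)\}$. Chaining the two computations gives $f^{L}(1)=f^{R}(0)=\sup_{x\in I}\{f(x)\}$, which is exactly the claim.

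There is essentially no obstacle here: the statement is an immediate unpacking of the two constrained suprema, the only point worth flagging being that $1$ and $0$ are respectively the top and bottom of the order on $I$, which is precisely what collapses each constrained supremum onto the global supremum of $f$ over $I$. One could alternatively deduce it from Proposition~\ref{Main-Prop}(4): since $f^{L}$ is increasing, $(f^{L})^{R}$ is the constant function $f^{L}(1)$, and Proposition~\ref{Main-Prop}(4) identifies that constant with $\sup_{x\in I}\{f(x)\}$; the analogous remark for the decreasing function $f^{R}$ handles $f^{R}(0)$. This route, however, invokes heavier machinery than the result requires, so I would prefer the two-line direct argument above.
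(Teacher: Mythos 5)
Your proof is correct and matches the paper, which gives no explicit argument and simply notes that the lemma follows from the definitions of $f^{L}$ and $f^{R}$; your direct two-line unpacking (the constraints $y\leq 1$ and $y\geq 0$ are vacuous on $I$, so each constrained supremum is the global one) is exactly that intended argument.
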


\begin{proposition}\label{Pro-L}
For $f\in \mathbf{M}$, $f^{L_{\mathrm{w}}}(x)=\sup_{t\in [0, x)}\{f^{L}(t)\}$ for all $x\in (0, 1]$.
\end{proposition}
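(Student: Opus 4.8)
The plan is to prove the identity by establishing the two opposite inequalities between the suprema appearing on each side. Set $A := f^{L_{\mathrm{w}}}(x) = \sup\{f(y) \mid 0 \le y < x\}$ (using $x \in (0,1]$, so that the defining formula for $f^{L_{\mathrm{w}}}$ reduces to this supremum) and $B := \sup_{t \in [0,x)}\{f^L(t)\}$; the goal is $A = B$.

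First I would show $A \le B$. For each $y$ with $0 \le y < x$, the definition of $f^L$ gives $f(y) \le f^L(y)$ (equivalently, this is Proposition~\ref{Main-Prop}(1)), and since $y$ lies in the interval $[0,x)$ we have $f^L(y) \le B$ by the definition of $B$ as a supremum over that interval. Taking the supremum over all admissible $y$ then yields $A \le B$.

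Next I would establish the reverse inequality $B \le A$. Here I fix an arbitrary $t \in [0,x)$ and observe that, because $t < x$, every $y \in I$ with $y \le t$ automatically satisfies $y < x$; hence the index set $\{y \mid y \le t\}$ is contained in $\{y \mid y < x\}$, which forces $f^L(t) = \sup\{f(y) \mid y \le t\} \le \sup\{f(y) \mid y < x\} = A$. As this bound is uniform in $t$, taking the supremum over $t \in [0,x)$ gives $B \le A$.

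The two inequalities together yield the claim. The argument is entirely elementary and I do not anticipate a genuine obstacle; the only step meriting care is the second one, where one must verify that the strict bound $t < x$ propagates to $y < x$ for every $y \le t$, so that the half-open range $[0,x)$ correctly captures the weak-left closure. This monotone containment of index sets is precisely what makes $f^{L_{\mathrm{w}}}$ recoverable as a supremum of ordinary left closures over $[0,x)$.
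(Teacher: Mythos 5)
Your proposal is correct and follows essentially the same two-inequality strategy as the paper; the first inequality is identical, and for the reverse one you use the direct containment $\{y \mid y\le t\}\subseteq\{y\mid y<x\}$ whereas the paper routes through the midpoint $\tfrac{t+x}{2}$ and the monotonicity of $f^{L_{\mathrm{w}}}$. Your version of that step is, if anything, slightly more direct, but the arguments are substantively the same.
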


\begin{proof}
Fix any $x\in (0, 1]$, noting that $f(t)\leq f^{L}(t)$ for all $t\in [0, x)$, we have
$$
f^{L_{\mathrm{w}}}(x)=\sup_{t\in [0, x)}\{f(t)\}\leq\sup_{t\in [0, x)}\{f^{L}(t)\}.
$$

Moreover, for any $t\in [0, x)$, it follows from $t< \frac{t+x}{2}<x$ that
$f^{L}(t)\leq f^{L_{\mathrm{w}}}(\frac{t+x}{2})\leq f^{L_{\mathrm{w}}}(x)$,
implying that
$$
\sup_{t\in [0, x)}\{f^{L}(t)\}\leq f^{L_{\mathrm{w}}}(x).
$$
Thus,
$$
f^{L_{\mathrm{w}}}(x)=\sup_{t\in [0, x)}\{f^{L}(t)\}.
$$
\end{proof}

\begin{lemma}\label{<-Lemma}
For $f\in \mathbf{N}$, $\inf\{x\in I\mid f^{L}(x)=1\}\leq \sup\{x\in I\mid f^{R}(x)=1\}$.
\end{lemma}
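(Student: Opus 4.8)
The plan is to argue by contradiction, exploiting the elementary identity $f^{L}(x)\vee f^{R}(x)=\sup_{z\in I}\{f(z)\}$ recorded immediately after the definition of $f^{L}$ and $f^{R}$. First I would check that the two sets in the statement are nonempty, so that the $\inf$ and $\sup$ are genuinely attained over nonempty sets: by Lemma~\ref{L-R} together with the normality of $f$ one has $f^{L}(1)=f^{R}(0)=\sup_{x\in I}\{f(x)\}=1$, so $1\in\{x\in I\mid f^{L}(x)=1\}$ and $0\in\{x\in I\mid f^{R}(x)=1\}$. Hence the quantities $a:=\inf\{x\in I\mid f^{L}(x)=1\}$ and $b:=\sup\{x\in I\mid f^{R}(x)=1\}$ are well defined, and the goal is to prove $a\leq b$.

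Next I would assume, toward a contradiction, that $a>b$ and pick any $c$ with $b<c<a$. Since $c<a=\inf\{x\mid f^{L}(x)=1\}$, the point $c$ cannot lie in $\{x\mid f^{L}(x)=1\}$ (otherwise $a\leq c$ would hold), and as $f^{L}(c)\leq 1$ always, this forces $f^{L}(c)<1$. Symmetrically, from $c>b=\sup\{x\mid f^{R}(x)=1\}$ one gets $f^{R}(c)<1$. Here the monotonicity of $f^{L}$ and $f^{R}$ (increasing and decreasing, respectively) is the conceptual reason the two defining sets are the ``tails'' $[a,1]$ and $[0,b]$, so that a point strictly between $b$ and $a$ escapes both; but formally only the $\inf$/$\sup$ definitions are needed for the two strict inequalities.

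The conclusion then drops out from the key identity: evaluating $f^{L}(x)\vee f^{R}(x)=\sup_{z\in I}\{f(z)\}$ at $x=c$ and invoking normality gives $f^{L}(c)\vee f^{R}(c)=1$, which is incompatible with $f^{L}(c)<1$ and $f^{R}(c)<1$. This contradiction rules out $a>b$, so $a\leq b$, exactly the claimed inequality. I do not anticipate a genuine obstacle: the only points demanding care are the nonemptiness of the two sets (so that $a,b$ make sense) and the strictness of the inequalities coming from the $\inf$/$\sup$, both handled above. The entire argument really rests on the single observation that normality forces $f^{L}\vee f^{R}\equiv 1$ throughout $I$, so the ``$f^{L}=1$ region'' and the ``$f^{R}=1$ region'' must overlap or at least abut, and cannot leave a gap.
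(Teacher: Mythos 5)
Your proof is correct and rests on exactly the same ingredients as the paper's: nonemptiness of the two sets via Lemma~\ref{L-R}, and the identity $f^{L}(x)\vee f^{R}(x)=\sup_{z\in I}\{f(z)\}=1$ forcing $f^{R}=1$ wherever $f^{L}<1$. The only difference is packaging --- the paper argues directly that every $\alpha<\eta$ satisfies $\alpha\leq\xi$, while you run the same observation as a contradiction at an intermediate point $c$ --- so this is essentially the paper's proof.
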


\begin{proof}
From $f\in \mathbf{N}$ and Lemma~\ref{L-R}, it follows that $f^{L}(1)=f^{R}(0)=\sup\{f(x)\mid x\in I\}=1$,
which means that
both $\{x\in I\mid f^{L}(x)=1\}$ and $\{x\in I\mid f^{R}(x)=1\}$ are nonempty sets. Denote $\eta=
\inf\{x\in I\mid f^{L}(x)=1\}$ and $\xi=\sup\{x\in I\mid f^{R}(x)=1\}$. If $\eta=0$, this holds trivially.
If $\eta>0$, then for any $0\leq \alpha<\eta$, $f^{L}(\alpha)< 1$. This, together with
{\color{blue}$f^{L}(\alpha)\vee f^{R}(\alpha)=
\sup_{x\in I}\{f(x)\}=1$},
implies that $f^{R}(\alpha)=1$. Thus, $\alpha \leq \xi$. Therefore,
$$
\xi \geq \sup\{\alpha\mid 0\leq \alpha <\eta\}=\eta.
$$
\end{proof}


\section{\color{blue}Basic properties of $\ast$}\label{S3-1}
In this section, {\color{blue}basic properties} of $\ast$ determined by
the binary operations $\vartriangle$,
$\triangledown$, $\curlywedge$, and $\curlyvee$ are obtained.
\begin{proposition}\label{1-Lemma}
\begin{itemize}
\item[{\rm (1)}] Let $\ast$ be a $t$-norm on $I$. Then, $x\ast y=1$ if and only if $x=y=1$.
\item[{\rm (2)}] Let $\ast$ be a $t$-conorm on $I$. Then, $x\ast y=0$ if and only if $x=y=0$.
\end{itemize}
\end{proposition}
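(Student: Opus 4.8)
The plan is to prove both parts by the same monotonicity-plus-neutral-element argument, exploiting the fact that the two statements are formally dual under the exchange of the boundary points $0$ and $1$. I would treat part~(1) in full and then obtain part~(2) by reversing all inequalities. Notice that neither associativity (T2) nor commutativity (T1) is needed: the neutral-element axiom already supplies both one-sided identities, so only (T3) and (T4) (respectively (T4')) will be invoked.

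For part~(1), the ``if'' direction is immediate, since $x=y=1$ together with (T4) gives $1\ast 1=1$. For the ``only if'' direction I would assume $x\ast y=1$ and use monotonicity in each argument separately. Because $\ast$ is increasing in its first argument and $x\leq 1$, we have $x\ast y\leq 1\ast y=y$ by (T4); as the left-hand side equals $1$, this forces $1\leq y$, hence $y=1$. Running the same reasoning in the second argument, from $y\leq 1$ and $x\ast 1=x$ we get $x\ast y\leq x$, so $x=1$. Thus $x=y=1$.

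Part~(2) is entirely analogous. The ``if'' direction follows from (T4') via $0\ast 0=0$, and for the ``only if'' direction I would reverse the inequalities: from $x\geq 0$ and monotonicity, $x\ast y\geq 0\ast y=y$ by (T4'), which combined with $x\ast y=0$ yields $y=0$; symmetrically $x=0$.

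There is no substantial obstacle in this elementary result. The only point requiring care is bookkeeping: one must invoke the correct neutral element — the unit $1$ for the $t$-norm and the unit $0$ for the $t$-conorm — and apply monotonicity in the appropriate argument so that the comparison bounds $x\ast y$ from the right side (by the other factor) in the $t$-norm case and from the left side in the $t$-conorm case.
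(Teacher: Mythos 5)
Your proof is correct; the paper states Proposition~\ref{1-Lemma} without any proof at all, treating it as a standard fact. Your monotonicity-plus-neutral-element argument (bounding $x\ast y\leq 1\ast y=y$ and $x\ast y\leq x\ast 1=x$, and dually for the $t$-conorm) is exactly the canonical way to fill that gap, and your observation that (T1) and (T2) are not needed is accurate.
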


{\color{blue}\begin{lemma}\label{Con-1}
\begin{itemize}
\item[{\rm (1)}] Let $\vartriangle$ be a $t$-norm on $I$ and $\ast$ be a binary operation on $I$.
Then,
$$
(f\curlywedge g)(1)=f(1)\ast g(1).
$$
\item[{\rm (2)}] Let $\triangledown$ be a $t$-conorm on $I$ and $\ast$ be a binary operation on $I$.
Then,
$$
(f\curlyvee g)(0)=f(0)\ast g(0).
$$
\end{itemize}
\end{lemma}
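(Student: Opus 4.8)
The plan is to reduce each of the two displayed suprema to a single term, the entire content being carried by Proposition~\ref{1-Lemma}. For part~(1), I would first unfold the definition~\eqref{O-1} of $\curlywedge$ at the point $x=1$, which gives
$$
(f\curlywedge g)(1)=\sup\left\{f(y)\ast g(z)\mid y\vartriangle z=1\right\}.
$$
The key step is then to invoke Proposition~\ref{1-Lemma}(1): since $\vartriangle$ is a $t$-norm, the equation $y\vartriangle z=1$ holds if and only if $y=z=1$. Consequently the index set of the supremum collapses to the single pair $(y,z)=(1,1)$, so the supremum is taken over the singleton $\{f(1)\ast g(1)\}$ and therefore equals $f(1)\ast g(1)$, as claimed. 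Note that this argument makes no use of commutativity, associativity, or monotonicity of $\ast$, and indeed none of those is assumed; only the fact that $\ast$ is a well-defined binary operation is needed, so that $f(1)\ast g(1)$ is a single element of $I$.

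Part~(2) is handled by the symmetric argument. I would unfold definition~\eqref{O-2} at $x=0$ to obtain $(f\curlyvee g)(0)=\sup\{f(y)\ast g(z)\mid y\ \triangledown\ z=0\}$, and then apply Proposition~\ref{1-Lemma}(2), which states that for a $t$-conorm $\triangledown$ one has $y\ \triangledown\ z=0$ if and only if $y=z=0$. As before, the fibre of $\triangledown$ over $0$ is the single point $(0,0)$, so the supremum reduces to $f(0)\ast g(0)$.

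The proof presents no real obstacle: the only point requiring care is the recognition that the constraint defining each supremum forces a unique pair $(y,z)$, so that the supremum is attained trivially over a one-element set. Once the boundary behaviour of the $t$-norm and the $t$-conorm is pinned down by Proposition~\ref{1-Lemma}, both identities are immediate. I expect the write-up to consist of little more than substituting $x=1$ (respectively $x=0$) into the defining formula and quoting Proposition~\ref{1-Lemma}.
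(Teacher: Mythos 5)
Your proof is correct and follows exactly the paper's own route: substitute $x=1$ (resp.\ $x=0$) into the defining formula and invoke Proposition~\ref{1-Lemma} to collapse the index set of the supremum to the single pair $(1,1)$ (resp.\ $(0,0)$). No further comment is needed.
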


\begin{proof}
Since $\vartriangle$ is a $t$-norm, from Proposition~\ref{1-Lemma}, we have
$$
(f\curlywedge g)(1)=\sup\{f(y)\ast g(z)\mid y\vartriangle z=1\}=f(1)\ast g(1).
$$
Similarly, we have
$$
(f\curlyvee g)(0)=\sup\{f(y)\ast g(z)\mid y\triangledown z=1\}=f(0)\ast g(0).
$$
\end{proof}
}

\begin{proposition}\label{Commu-Asso-Thm}
Let $\vartriangle$ be a $t$-norm on $I$ and $\ast$ be a binary operation on $I$.
If $\curlywedge$ is commutative on $\mathbf{L}$, then $\ast$ is commutative.
\end{proposition}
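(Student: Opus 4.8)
The plan is to test the hypothesized identity $f\curlywedge g=g\curlywedge f$ only at the point $x=1$, where the supremum defining $\curlywedge$ collapses to a single product. By Lemma~\ref{Con-1}(1) we already know that $(f\curlywedge g)(1)=f(1)\ast g(1)$ for all $f,g\in\mathbf{L}$, this being exactly the spot where Proposition~\ref{1-Lemma}(1) forces the constraint $y\vartriangle z=1$ to entail $y=z=1$. Hence, if $\curlywedge$ is commutative, then evaluating the functional identity $f\curlywedge g=g\curlywedge f$ at $x=1$ gives
$$
f(1)\ast g(1)=(f\curlywedge g)(1)=(g\curlywedge f)(1)=g(1)\ast f(1).
$$

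It remains to see that the pair of values $(f(1),g(1))$ can be made to range over all of $I^2$ while keeping $f,g\in\mathbf{L}$. For this I would, for each $c\in I$, introduce the step function $h_c\in\mathbf{M}$ given by $h_c(0)=1$ and $h_c(x)=c$ for $x\in(0,1]$. Such $h_c$ is normal because $\sup_{x\in I}h_c(x)=1$, and it is convex because it is nonincreasing: for $0\le x\le y\le z\le 1$ one has $h_c(x)\ge h_c(y)\ge h_c(z)$, whence $h_c(y)\ge h_c(z)=h_c(x)\wedge h_c(z)$. Thus $h_c\in\mathbf{L}$ and $h_c(1)=c$.

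Finally, given arbitrary $a,b\in I$, I would set $f=h_a$ and $g=h_b$ and read off $a\ast b=f(1)\ast g(1)=g(1)\ast f(1)=b\ast a$ from the displayed equality, proving that $\ast$ is commutative. The argument has no genuine obstacle: the only point requiring care is the elementary verification that each $h_c$ is simultaneously normal and convex, and the conceptual step is simply recognizing that commutativity of the whole convolution need only be probed at $x=1$, where Lemma~\ref{Con-1} reduces it to the bare operation $\ast$.
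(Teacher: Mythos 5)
Your proof is correct and follows essentially the same route as the paper: both evaluate the commutativity identity at $x=1$ via Lemma~\ref{Con-1} and then realize arbitrary pairs $(a,b)\in I^2$ as values $(f(1),g(1))$ of functions in $\mathbf{L}$. The only cosmetic difference is your choice of step functions $h_c$ where the paper uses the decreasing linear functions $x\mapsto (c-1)x+1$; both are valid witnesses.
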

\begin{proof}
Suppose, on the contrary, that $\ast$ is not commutative. Then, there
exist $u, v\in I$ such that $u\ast v\neq v\ast u$.
Choose two functions $f, g\in \mathbf{M}$, as follows
$$
f(x)=(u-1)x+1,
$$
and
$$
g(x)=(v-1)x+1,
$$ for $x\in I$. It can be verified that
$f, g\in \mathbf{L}$, as both $f$ and $g$ are decreasing.
Since $\curlywedge$ is commutative, {\color{blue}Lemma~\ref{Con-1}} yields
\begin{align*}
&u\ast v=f(1) \ast g(1)=(f\curlywedge g)(1)\\
&=(g\curlywedge f)(1)=g(1) \ast f(1)=v\ast u,
\end{align*}
which is a contradiction. Therefore,
$\ast$ is commutative.
\end{proof}

\begin{proposition}\label{0-element}
Let $\vartriangle$ be a $t$-norm on $I$ and $\ast$ be a binary operation on $I$.
If $\curlywedge$ is a $t$-norm on $\mathbf{L}$, then
$0\ast 0=0\ast 1=1\ast 0=0$ and $1\ast 1=1$.
\end{proposition}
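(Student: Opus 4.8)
The plan is to exploit the neutral element axiom (O3), namely $f\curlywedge \bm{1}_{\{1\}}=f$, together with the evaluation formula $(f\curlywedge g)(1)=f(1)\ast g(1)$ from Lemma~\ref{Con-1}, feeding in the two extreme characteristic functions $\bm{1}_{\{0\}}$ and $\bm{1}_{\{1\}}$ of $\mathbf{L}$. I recall that both lie in $\mathbf{L}$, with $\bm{1}_{\{1\}}(1)=1$ and $\bm{1}_{\{0\}}(1)=0$, and that $\bm{1}_{\{0\}}$ and $\bm{1}_{\{1\}}$ are, respectively, the minimum and the maximum of the lattice $\mathfrak{L}$.

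First I would settle $1\ast 1$. Applying (O3) with $f=\bm{1}_{\{1\}}$ gives $\bm{1}_{\{1\}}\curlywedge\bm{1}_{\{1\}}=\bm{1}_{\{1\}}$, and evaluating at $1$ via Lemma~\ref{Con-1} yields $1\ast 1=(\bm{1}_{\{1\}}\curlywedge\bm{1}_{\{1\}})(1)=\bm{1}_{\{1\}}(1)=1$. Next, for $0\ast 1$ I would take $\bm{1}_{\{0\}}$ as the first argument and $\bm{1}_{\{1\}}$ as the neutral one: by (O3), $\bm{1}_{\{0\}}\curlywedge\bm{1}_{\{1\}}=\bm{1}_{\{0\}}$, so Lemma~\ref{Con-1} gives $0\ast 1=(\bm{1}_{\{0\}}\curlywedge\bm{1}_{\{1\}})(1)=\bm{1}_{\{0\}}(1)=0$. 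The symmetric identity $1\ast 0=0$ then follows either from commutativity of $\curlywedge$ (axiom (O1)), which forces $\bm{1}_{\{1\}}\curlywedge\bm{1}_{\{0\}}=\bm{1}_{\{0\}}$, or directly from the commutativity of $\ast$ established in Proposition~\ref{Commu-Asso-Thm}.

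The remaining case $0\ast 0$ is the only delicate one, since here neither argument is the neutral element and so (O3) alone does not apply. The key idea is to combine the monotonicity axiom (O4) with the extremality of $\bm{1}_{\{0\}}$ in $\mathfrak{L}$. From $\bm{1}_{\{0\}}\sqsubseteq\bm{1}_{\{1\}}$ and (O4) I obtain $\bm{1}_{\{0\}}\curlywedge\bm{1}_{\{0\}}\sqsubseteq\bm{1}_{\{1\}}\curlywedge\bm{1}_{\{0\}}=\bm{1}_{\{0\}}$, while minimality of $\bm{1}_{\{0\}}$ gives the reverse inequality $\bm{1}_{\{0\}}\sqsubseteq\bm{1}_{\{0\}}\curlywedge\bm{1}_{\{0\}}$; antisymmetry of $\sqsubseteq$ then forces $\bm{1}_{\{0\}}\curlywedge\bm{1}_{\{0\}}=\bm{1}_{\{0\}}$. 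Evaluating at $1$ through Lemma~\ref{Con-1} gives $0\ast 0=(\bm{1}_{\{0\}}\curlywedge\bm{1}_{\{0\}})(1)=\bm{1}_{\{0\}}(1)=0$, which finishes the argument.

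I expect this last step to be the main obstacle, precisely because the value $0\ast 0$ cannot be read off from the neutral element and instead requires invoking both the order-theoretic minimality of $\bm{1}_{\{0\}}$ and the monotonicity of $\curlywedge$; the other three identities are immediate consequences of (O3), (O1), and Lemma~\ref{Con-1}.
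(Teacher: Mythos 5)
Your proof is correct, and for three of the four identities ($1\ast 1$, $0\ast 1$, $1\ast 0$) it coincides with the paper's: both evaluate $(f\curlywedge \bm{1}_{\{1\}})(1)$ via Lemma~\ref{Con-1} and the neutral-element axiom (O3), with commutativity supplying $1\ast 0=0\ast 1$. The divergence is in the case $0\ast 0$, which you rightly flag as the delicate one. The paper stays inside the convolution formula: it evaluates $(\bm{1}_{\{0.5\}}\curlywedge\bm{1}_{\{1\}})(0)=\bm{1}_{\{0.5\}}(0)=0$ and observes that, since $1\vartriangle 0=0$, the pair $(y,z)=(1,0)$ contributes the term $\bm{1}_{\{0.5\}}(1)\ast\bm{1}_{\{1\}}(0)=0\ast 0$ to the supremum, whence $0\ast 0\leq 0$. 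You instead argue order-theoretically: monotonicity (O4) together with $\bm{1}_{\{0\}}\sqsubseteq\bm{1}_{\{1\}}$ gives $\bm{1}_{\{0\}}\curlywedge\bm{1}_{\{0\}}\sqsubseteq\bm{1}_{\{1\}}\curlywedge\bm{1}_{\{0\}}=\bm{1}_{\{0\}}$, minimality of $\bm{1}_{\{0\}}$ in $\mathfrak{L}$ gives the reverse relation, and antisymmetry of the partial order pins down $\bm{1}_{\{0\}}\curlywedge\bm{1}_{\{0\}}=\bm{1}_{\{0\}}$, after which Lemma~\ref{Con-1} at the point $1$ yields $0\ast 0=0$. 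Both arguments are sound. The paper's is shorter and purely computational, needing only a single witness term of the supremum; yours trades the explicit formula for the basic axioms (O1)--(O4) plus the known fact that $\bm{1}_{\{0\}}$ is the minimum of $(\mathbf{L},\sqsubseteq)$, so that the only place the convolution structure enters your argument is through Lemma~\ref{Con-1} itself.
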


\begin{proof}
Since $\bm{1}_{\{1\}}$ is the neutral element of $\curlywedge$, from Lemma \ref{Con-1}
and Proposition~\ref{Commu-Asso-Thm},
it follows that
\begin{align*}
0&=\bm{1}_{\{0\}}(1)=(\bm{1}_{\{1\}}\curlywedge \bm{1}_{\{0\}})(1)\\
&=\bm{1}_{\{1\}}(1) \ast \bm{1}_{\{0\}}(1)\\
&=1\ast 0=0\ast 1;
\end{align*}
\begin{align*}
0&=\bm{1}_{\{0.5\}}(0)=(\bm{1}_{\{0.5\}}\curlywedge \bm{1}_{\{1\}})(0)\\
&\geq \bm{1}_{\{0.5\}}(1) \ast \bm{1}_{\{1\}}(0)\ (\text{as } 1\vartriangle 0=0)\\
& =0\ast 0;
\end{align*}
and
\begin{align*}
1&=\bm{1}_{\{1\}}(1)=(\bm{1}_{\{1\}}\curlywedge \bm{1}_{\{1\}})(1)\\
&=\bm{1}_{\{1\}}(1)\ast \bm{1}_{\{1\}}(1) =1\ast 1.
\end{align*}
\end{proof}

\begin{proposition}\label{0-element-2}
Let $\triangledown$ be a $t$-conorm on $I$ and $\ast$ be a binary operation on $I$.
If $\curlyvee$ is a $t$-conorm on $\mathbf{L}$, then
$0\ast 0=0\ast 1=1\ast 0=0$ and $1\ast 1=1$.
\end{proposition}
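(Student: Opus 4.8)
The plan is to imitate the proof of Proposition~\ref{0-element} verbatim under the order-reversing dictionary that exchanges the roles of the $t$-norm $\vartriangle$ and the $t$-conorm $\triangledown$, of the neutral elements $\bm{1}_{\{1\}}$ and $\bm{1}_{\{0\}}$, and of the boundary points $1$ and $0$. Since $\curlyvee$ is a $t$-conorm on $\mathbf{L}$, its neutral element is $\bm{1}_{\{0\}}$ (axiom (O3')), and the exact boundary value is now read off at the point $0$ through Lemma~\ref{Con-1}(2), namely $(f\curlyvee g)(0)=f(0)\ast g(0)$. All four identities are driven by pairing a test function with the neutral element and then re-expressing the resulting value via Lemma~\ref{Con-1}(2).

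First I would dispose of the three identities that come straight from the neutral element. For $1\ast 1$, take $f=g=\bm{1}_{\{0\}}$: axiom (O3') gives $\bm{1}_{\{0\}}\curlyvee\bm{1}_{\{0\}}=\bm{1}_{\{0\}}$, whose value at $0$ is $1$, while Lemma~\ref{Con-1}(2) evaluates the same quantity as $\bm{1}_{\{0\}}(0)\ast\bm{1}_{\{0\}}(0)=1\ast 1$; hence $1\ast 1=1$. For $0\ast 1$ and $1\ast 0$, pair $\bm{1}_{\{1\}}$ with $\bm{1}_{\{0\}}$ (in either order, using (O1) and (O3')): the $\curlyvee$-product is forced to be $\bm{1}_{\{1\}}$, whose value at $0$ is $0$, and Lemma~\ref{Con-1}(2) rewrites this value as $\bm{1}_{\{1\}}(0)\ast\bm{1}_{\{0\}}(0)=0\ast 1$ (respectively $\bm{1}_{\{0\}}(0)\ast\bm{1}_{\{1\}}(0)=1\ast 0$), so $0\ast 1=1\ast 0=0$.

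The remaining identity $0\ast 0=0$ is the only step I expect to need a genuine idea, because it cannot be extracted from the boundary evaluation at $0$ alone: to produce $0\ast 0$ one needs two functions vanishing at $0$, but then Lemma~\ref{Con-1}(2) no longer pins down the value of the product, since neither factor is the neutral element. The device, dual to the one used for $\curlywedge$, is to evaluate at the opposite endpoint $1$ and exploit the absorbing element of the $t$-conorm, $0\triangledown 1=1$. Taking $f=\bm{1}_{\{0.5\}}$ and $g=\bm{1}_{\{0\}}$, axiom (O3') gives $\bm{1}_{\{0.5\}}\curlyvee\bm{1}_{\{0\}}=\bm{1}_{\{0.5\}}$, so its value at $1$ equals $\bm{1}_{\{0.5\}}(1)=0$. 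On the other hand, since $0\triangledown 1=1$ is an admissible decomposition of $1$, the defining supremum in \eqref{O-2} yields $(\bm{1}_{\{0.5\}}\curlyvee\bm{1}_{\{0\}})(1)\geq \bm{1}_{\{0.5\}}(0)\ast\bm{1}_{\{0\}}(1)=0\ast 0$. Combining the two estimates gives $0\geq 0\ast 0$, and since $\ast$ maps into $I$ we conclude $0\ast 0=0$. The only subtlety throughout is the choice of endpoint—$0$ for the neutral-element identities and $1$ for $0\ast 0$—dictated by where $\triangledown$ exhibits its neutral and its absorbing behaviour.
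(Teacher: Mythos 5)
Your proof is correct, and for three of the four identities it coincides with the paper's own argument: $0\ast 1$ and $1\ast 0$ come from evaluating $\bm{1}_{\{1\}}\curlyvee\bm{1}_{\{0\}}$ and $\bm{1}_{\{0\}}\curlyvee\bm{1}_{\{1\}}$ at $0$ via Lemma~\ref{Con-1}(2), and $0\ast 0$ from bounding $(\bm{1}_{\{0.5\}}\curlyvee\bm{1}_{\{0\}})(1)=0$ below by the term supplied by the admissible decomposition $0\triangledown 1=1$. The only place you diverge is $1\ast 1$: the paper evaluates $(\bm{1}_{\{1\}}\curlyvee\bm{1}_{\{0\}})(1)$ and must first use $0\ast 0=0\ast 1=1\ast 0=0$ to reduce the supremum over all decompositions $y\triangledown z=1$ to $\sup\{0,\,1\ast 1\}$, whereas you apply Lemma~\ref{Con-1}(2) to $\bm{1}_{\{0\}}\curlyvee\bm{1}_{\{0\}}=\bm{1}_{\{0\}}$ at the point $0$, getting $1\ast 1=\bm{1}_{\{0\}}(0)\ast\bm{1}_{\{0\}}(0)=1$ in one line and independently of the other three identities. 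Your variant avoids the supremum analysis entirely and is the cleaner of the two; there are no gaps.
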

\begin{proof}
Since $\bm{1}_{\{0\}}$ is the neutral element of $\curlyvee$, from Lemma \ref{Con-1},
it follows that
\begin{equation}\label{eq-1-0-element-2}
\begin{split}
0&=\bm{1}_{\{1\}}(0)=(\bm{1}_{\{1\}}\curlyvee \bm{1}_{\{0\}})(0)\\
&=\bm{1}_{\{1\}}(0) \ast \bm{1}_{\{0\}}(0)=0\ast 1;
\end{split}
\end{equation}
\begin{equation}\label{eq-2-0-element-2}
\begin{split}
0&=\bm{1}_{\{1\}}(0)=(\bm{1}_{\{0\}}\curlyvee\bm{1}_{\{1\}})(0)\\
&=\bm{1}_{\{0\}}(0) \ast \bm{1}_{\{1\}}(0)=1\ast 0;
\end{split}
\end{equation}
and
\begin{equation}\label{eq-3-0-element-2}
\begin{split}
0&=\bm{1}_{\{0.5\}}(1)=(\bm{1}_{\{0.5\}}\curlyvee \bm{1}_{\{0\}})(1)\\
&\geq \bm{1}_{\{0.5\}}(0) \ast \bm{1}_{\{0\}}(1)\ (\text{as } 0\triangledown 1=1)\\
& =0\ast 0.
\end{split}
\end{equation}
It follows from \eqref{eq-1-0-element-2}--\eqref{eq-3-0-element-2} that,
for $y, z\in I$, one has $\bm{1}_{\{1\}}(y)\ast \bm{1}_{\{0\}}(z)\in \{0, 1\ast 1\}$.
This implies that
{\color{blue}\begin{align*}
1&=\bm{1}_{\{1\}}(1)=(\bm{1}_{\{1\}}\curlyvee \bm{1}_{\{0\}})(1)\\
&=\sup\{0, \bm{1}_{\{1\}}(1)\ast \bm{1}_{\{0\}}(0)\} \ (\text{as } 1\triangledown 0=1)\\
&=1\ast 1.
\end{align*}
}
\end{proof}


\begin{proposition}\label{product}
Let $\vartriangle$ be a $t$-norm on $I$ and $\ast$ be a binary operation on $I$.
If $\curlywedge$ is a $t$-norm on $\mathbf{L}$, then,
for $x_1, x_2\in I$, one has $\bm{1}_{\{x_1\}}\curlywedge \bm{1}_{\{x_2\}}=\bm{1}_{\{x_1\vartriangle x_2\}}$.
\end{proposition}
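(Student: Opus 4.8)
The plan is to show that $\bm{1}_{\{x_1\}}\curlywedge\bm{1}_{\{x_2\}}$ is the characteristic function of the single point $x_1\vartriangle x_2$, by evaluating the supremum in~\eqref{O-1} pointwise. First I would fix an arbitrary $x\in I$ and write
$$
(\bm{1}_{\{x_1\}}\curlywedge\bm{1}_{\{x_2\}})(x)=\sup\bigl\{\bm{1}_{\{x_1\}}(y)\ast\bm{1}_{\{x_2\}}(z)\mid y\vartriangle z=x\bigr\}.
$$
The key observation is that $\bm{1}_{\{x_1\}}(y)$ takes only the values $0$ and $1$, namely $1$ exactly when $y=x_1$ and $0$ otherwise, and similarly for $\bm{1}_{\{x_2\}}(z)$. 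Hence each term $\bm{1}_{\{x_1\}}(y)\ast\bm{1}_{\{x_2\}}(z)$ lies in the set $\{0\ast 0,\ 0\ast 1,\ 1\ast 0,\ 1\ast 1\}$, and by Proposition~\ref{0-element} this set is exactly $\{0,1\}$, with the value $1$ attained only by the term $1\ast 1$, i.e.\ only when simultaneously $y=x_1$ and $z=x_2$.

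From here the argument splits into two cases according to whether $x=x_1\vartriangle x_2$. If $x=x_1\vartriangle x_2$, then the pair $(y,z)=(x_1,x_2)$ satisfies the constraint $y\vartriangle z=x$ and contributes the value $1\ast 1=1$; since all terms are at most $1$, the supremum equals $1$. If $x\neq x_1\vartriangle x_2$, then for every admissible pair $(y,z)$ with $y\vartriangle z=x$ we cannot have both $y=x_1$ and $z=x_2$ (otherwise $x=x_1\vartriangle x_2$), so the only potentially nonzero term is excluded; every remaining term is one of $0\ast 0$, $0\ast 1$, or $1\ast 0$, each equal to $0$ by Proposition~\ref{0-element}, so the supremum is $0$. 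Combining the two cases gives
$$
(\bm{1}_{\{x_1\}}\curlywedge\bm{1}_{\{x_2\}})(x)=\begin{cases}1,&x=x_1\vartriangle x_2,\\0,&x\neq x_1\vartriangle x_2,\end{cases}
$$
which is precisely $\bm{1}_{\{x_1\vartriangle x_2\}}(x)$.

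The computation is essentially routine once Proposition~\ref{0-element} is invoked; the only point requiring a little care is the empty-supremum issue. If for some $x$ there is no pair $(y,z)\in I^2$ with $y\vartriangle z=x$, the supremum is taken over the empty set. I would handle this by noting that the range of $\vartriangle$ on $I^2$ is all of $[0,1]$ (since $\vartriangle$ is a $t$-norm, $0\vartriangle 0=0$, $1\vartriangle 1=1$, and $t\mapsto t\vartriangle 1=t$ is the identity, so every $x$ is attained), whence the constraint set is never empty and the degenerate case does not arise. This is the main obstacle to a fully rigorous write-up, but it is minor. Thus $\bm{1}_{\{x_1\}}\curlywedge\bm{1}_{\{x_2\}}=\bm{1}_{\{x_1\vartriangle x_2\}}$, as claimed.
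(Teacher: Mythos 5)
Your proposal is correct and follows essentially the same route as the paper: both invoke Proposition~\ref{0-element} to reduce every term $\bm{1}_{\{x_1\}}(y)\ast\bm{1}_{\{x_2\}}(z)$ to $0$ or $1$, with $1$ attained exactly when $(y,z)=(x_1,x_2)$, and then evaluate the supremum pointwise. Your extra remark that the constraint set $\{(y,z)\mid y\vartriangle z=x\}$ is nonempty (via $x\vartriangle 1=x$) addresses a degenerate case the paper leaves implicit, but it changes nothing substantive.
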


\begin{proof}
{\color{blue}Proposition~\ref{0-element} yields}
\begin{itemize}
  \item[(a)] for $y, z\in I$, $\bm{1}_{\{x_1\}}(y)\ast \bm{1}_{\{x_2\}}(z)\in\{0, 1\}$;
  \item[(b)] $\bm{1}_{\{x_1\}}(y)\ast \bm{1}_{\{x_2\}}(z)=1$ if and only if $y=x_1$ and $z=x_2$.
\end{itemize}
This, together with
$$(\bm{1}_{\{x_1\}}\curlywedge \bm{1}_{\{x_2\}})(x)
=\sup\{\bm{1}_{\{x_1\}}(y)\ast \bm{1}_{\{x_2\}}(z)\mid y\vartriangle z=x\},
$$
implies that
$$
\bm{1}_{\{x_1\}}\curlywedge \bm{1}_{\{x_2\}}=\bm{1}_{\{x_1\vartriangle x_2\}}.
$$
\end{proof}

\begin{proposition}\label{product-2}
Let $\triangledown$ be a $t$-conorm on $I$ and $\ast$ be a binary operation on $I$.
If $\curlyvee$ is a $t$-conorm, then, for $x_1, x_2\in I$, one has
$\bm{1}_{\{x_1\}}\curlyvee \bm{1}_{\{x_2\}}=\bm{1}_{\{x_1\triangledown x_2\}}$.
\end{proposition}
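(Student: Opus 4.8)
The plan is to mirror, step for step, the proof of Proposition~\ref{product}, substituting the dual ingredient Proposition~\ref{0-element-2} for Proposition~\ref{0-element}. First I would invoke Proposition~\ref{0-element-2}: since $\curlyvee$ is assumed to be a $t$-conorm on $\mathbf{L}$, the underlying binary operation $\ast$ on $I$ must satisfy $0\ast 0=0\ast 1=1\ast 0=0$ and $1\ast 1=1$. Crucially this is the \emph{same} four-value table that Proposition~\ref{0-element} produced in the $t$-norm case, so from here the argument is formally identical.

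From this table I would extract two facts about the characteristic functions $\bm{1}_{\{x_1\}}$ and $\bm{1}_{\{x_2\}}$, each of which takes only the values $0$ and $1$: (a) for all $y,z\in I$ one has $\bm{1}_{\{x_1\}}(y)\ast\bm{1}_{\{x_2\}}(z)\in\{0,1\}$, since all four entries of the table lie in $\{0,1\}$; and (b) $\bm{1}_{\{x_1\}}(y)\ast\bm{1}_{\{x_2\}}(z)=1$ if and only if $\bm{1}_{\{x_1\}}(y)=\bm{1}_{\{x_2\}}(z)=1$, that is, if and only if $y=x_1$ and $z=x_2$ (because $1\ast 1$ is the only entry equal to $1$).

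Finally I would feed (a) and (b) into the defining formula~\eqref{O-2}, namely $(\bm{1}_{\{x_1\}}\curlyvee\bm{1}_{\{x_2\}})(x)=\sup\{\bm{1}_{\{x_1\}}(y)\ast\bm{1}_{\{x_2\}}(z)\mid y\triangledown z=x\}$. By (a) this supremum equals $0$ or $1$; by (b) it attains the value $1$ exactly when the admissible pair $(y,z)=(x_1,x_2)$ occurs, i.e.\ precisely when $x=x_1\triangledown x_2$, and it equals $0$ for every other $x$. This is exactly the characteristic function $\bm{1}_{\{x_1\triangledown x_2\}}$, which completes the argument.

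There is no genuine obstacle here: essentially all the difficulty has already been absorbed into Proposition~\ref{0-element-2}, whose proof handled the subtlety that the neutral element on the conorm side is $\bm{1}_{\{0\}}$ rather than $\bm{1}_{\{1\}}$ and that the supremum defining $\curlyvee$ is taken over $y\triangledown z=x$. The only point deserving a moment's care is observation (b): one must use that $1\ast 1=1$ while all three remaining products vanish, so that the supremum cannot be inflated to $1$ by any pair other than $(x_1,x_2)$.
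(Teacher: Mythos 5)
Your proposal matches the paper's own proof essentially verbatim: the paper likewise derives facts (a) and (b) from Proposition~\ref{0-element-2} and feeds them into the defining formula \eqref{O-2} to conclude $\bm{1}_{\{x_1\}}\curlyvee \bm{1}_{\{x_2\}}=\bm{1}_{\{x_1\triangledown x_2\}}$. No differences worth noting.
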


\begin{proof}
{\color{blue}Proposition~\ref{0-element-2} yields}
\begin{itemize}
  \item[(a)] for $y, z\in I$, $\bm{1}_{\{x_1\}}(y)\ast \bm{1}_{\{x_2\}}(z)\in\{0, 1\}$;
  \item[(b)] $\bm{1}_{\{x_1\}}(y)\ast \bm{1}_{\{x_2\}}(z)=1$ if and only if $y=x_1$ and $z=x_2$.
\end{itemize}
This, together with
$$(\bm{1}_{\{x_1\}}\curlyvee \bm{1}_{\{x_2\}})(x)
=\sup\{\bm{1}_{\{x_1\}}(y)\ast \bm{1}_{\{x_2\}}(z)\mid y\triangledown z=x\},
$$
implies that
$$
\bm{1}_{\{x_1\}}\curlyvee \bm{1}_{\{x_2\}}=\bm{1}_{\{x_1\triangledown x_2\}}.
$$
\end{proof}

\section{\color{blue}Construction of a $t_r$-norm on $\mathbf{L}$}\label{S-V}
{\color{blue}For any $f, g\in \mathbf{L}$, let $\eta_{f, g}=\inf\{x\in I\mid f^{L}(x)=1\}
\wedge \inf\{x\in I\mid g^{L}(x)=1\}$ and
$\xi_{f, g}=\sup\{x\in I\mid f^{R}(x)=1\}\wedge \sup\{x\in I\mid g^{R}(x)=1\}$. By Lemma~\ref{<-Lemma},
we have $\eta_{f, g} \leq \xi_{f, g}$.}
\begin{definition}\label{bingstar-operation}
Define a binary operation $\hollowstar: \mathbf{L}^2\rightarrow \mathbf{M}$ as follows: for $f, g\in \mathbf{L}$,
\begin{itemize}
  \item[(1)] $f=\bm{1}_{\{1\}}$, $f\hollowstar g=g\hollowstar f=g$;
  \item[(2)] $g=\bm{1}_{\{1\}}$, $f\hollowstar g=g\hollowstar f=f$;
  \item[(3)] $f\neq \bm{1}_{\{1\}}$ and $g\neq \bm{1}_{\{1\}}$,
\begin{equation}\label{xin-operation}
(f\hollowstar g)(t)=\begin{cases}
f^{L}(t)\vee g^{L}(t), & t\in [0, \eta_{f, g}),\\
1, & t\in [\eta_{f, g}, \xi_{f, g}),\\
f^{R}(\xi)\wedge g^{R}(\xi), & t=\xi_{f, g}, \\
0, & t\in (\xi_{f, g}, 1].
\end{cases}
\end{equation}
\end{itemize}
\end{definition}
Clearly, $f\hollowstar g$ is increasing on $[0, \xi_{f, g})$.

\begin{proposition}\label{Closed-Lemma}
For $f, g\in \mathbf{L}$,
$f\hollowstar g$ is normal and convex, i.e., $f\hollowstar g\in \mathbf{L}$.
\end{proposition}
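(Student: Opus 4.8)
The plan is to handle the two special cases of Definition~\ref{bingstar-operation} at once and then treat the generic case by establishing convexity and normality separately; convexity will be almost automatic from the shape of $f\hollowstar g$, while normality will need a short analysis at the junction point $\xi_{f,g}$. In cases (1) and (2) of Definition~\ref{bingstar-operation} one has $f\hollowstar g\in\{f,g\}\subseteq\mathbf{L}$, so there is nothing to prove; hence I assume $f\ne\bm{1}_{\{1\}}$ and $g\ne\bm{1}_{\{1\}}$ and write $h=f\hollowstar g$, $\eta=\eta_{f,g}$, $\xi=\xi_{f,g}$, recalling $\eta\le\xi$ from Lemma~\ref{<-Lemma}.

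For convexity I would exploit that $h$ is increasing on $[0,\xi)$ (as already observed: it is the pointwise maximum $f^L\vee g^L$ of two increasing functions on $[0,\eta)$ and the constant $1$ on $[\eta,\xi)$), that $h(\xi)=f^R(\xi)\wedge g^R(\xi)$, and that $h\equiv 0$ on $(\xi,1]$. To get the convexity inequality $h(y)\ge h(x)\wedge h(z)$ for $x\le y\le z$, I would check that every superlevel set $\{t\in I\mid h(t)\ge\alpha\}$ is an interval: for $\alpha=0$ it is all of $I$, while for $\alpha>0$ the points of $(\xi,1]$ are excluded, the points of $[0,\xi)$ where $h\ge\alpha$ form an up-set (as $h$ increases there), and adjoining the right endpoint $\xi$ when $h(\xi)\ge\alpha$ keeps the set connected. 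Since this interval property is exactly the convexity condition (equivalently, $h=h^L\wedge h^R$ by Proposition~\ref{Main-Prop}(7)), $h$ is convex.

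For normality I want $\sup_t h(t)=1$ and would split on $\eta<\xi$ versus $\eta=\xi$. If $\eta<\xi$, then $h\equiv 1$ on the nonempty interval $[\eta,\xi)$ and we are done. The delicate case is $\eta=\xi=:c$, where the plateau is empty. If $c=0$, then $h=\bm{1}_{\{0\}}$, since $h(0)=f^R(0)\wedge g^R(0)=1$ by Lemma~\ref{L-R}. If $c>0$, I would case on $h(c)=f^R(c)\wedge g^R(c)$: when it equals $1$ normality is immediate, and when $h(c)<1$ I pick the function $\phi\in\{f,g\}$ with $\phi^R(c)<1$. Since $\xi=c$ forces $\sup\{x\mid\phi^R(x)=1\}\ge c$, one has $\phi^R(x)=1$ for all $x<c$; together with $\phi^R(c)<1$ and $\phi^R(0)=\sup_{x\in I}\phi(x)=1$ (normality of $\phi$) this forces $\sup_{0\le y<c}\phi(y)=1$, that is $\phi^{L_{\mathrm{w}}}(c)=1$. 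Proposition~\ref{Pro-L} then gives $\sup_{t\in[0,c)}\phi^L(t)=1$, whence $\sup_t h(t)\ge\sup_{t\in[0,c)}\bigl(f^L(t)\vee g^L(t)\bigr)=1$.

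The step I expect to be the main obstacle is precisely this collapsed case $\eta_{f,g}=\xi_{f,g}$: once the $1$-plateau disappears, normality is no longer secured by a prescribed value, and one must show that any loss of height at the junction $\xi_{f,g}$ is compensated by the left branch $f^L\vee g^L$ climbing to $1$ as $t\to\xi_{f,g}^-$. Isolating which of $f,g$ controls this limit and converting $\phi^R(c)<1$ into $\phi^{L_{\mathrm{w}}}(c)=1$ via Proposition~\ref{Pro-L} is the heart of the argument; everything else is routine bookkeeping.
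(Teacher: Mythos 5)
Your proposal is correct and follows essentially the same route as the paper: dispose of the $\bm{1}_{\{1\}}$ cases, get convexity from the increasing-then-decreasing shape of $f\hollowstar g$ (your superlevel-set phrasing is just a repackaging of this), and handle normality by splitting on $\eta_{f,g}<\xi_{f,g}$ versus $\eta_{f,g}=\xi_{f,g}$, with the collapsed case resolved via the identity $\phi^{L_{\mathrm{w}}}(c)\vee\phi^{R}(c)=\sup_{x\in I}\phi(x)=1$ together with Proposition~\ref{Pro-L}. The only cosmetic difference is that you case on whether $f^{R}(c)\wedge g^{R}(c)=1$ while the paper cases on whether $f^{L_{\mathrm{w}}}(c)\vee g^{L_{\mathrm{w}}}(c)=1$; these are contrapositive forms of the same dichotomy.
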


\begin{proof}
Consider the following two cases:
\begin{enumerate}[(1)]
  \item if $f=\bm{1}_{\{1\}}$ or $g=\bm{1}_{\{1\}}$, it is clear that $f\hollowstar g\in \mathbf{L}$;
  \item if $f\neq \bm{1}_{\{1\}}$ and $g\neq \bm{1}_{\{1\}}$, applying (\ref{xin-operation}), it is easy to see
  that $f\hollowstar g$ is convex, {\color{blue}since it is increasing on $[0, \xi_{f, g})$ and decreasing on
  $[\xi_{f, g}, 1]$.} It remains to show that $f\hollowstar g$ is normal.
  \begin{enumerate}
    \item If $\eta_{f, g}< \xi_{f, g}$, then $(f\hollowstar g)(t)=1$ for all $t\in [\eta_{f, g}, \xi_{f, g})$;
    \item If $\eta_{f, g}= \xi_{f, g}$, consider the following two subcases:
    \begin{enumerate}
      \item[{\color{blue}(b.1)}] $\eta_{f, g}=0$. It follows from (\ref{xin-operation}) that
      $$
      (f\hollowstar g)(t)=
      \begin{cases}
      f^{R}(0)\wedge g^{R}(0), & t=0,\\
      0, & t\in (0, 1].
      \end{cases}
      $$
Since $f$ and $g$ are normal, from Lemma~\ref{L-R}, it is clear that
$$
f^{R}(0)\wedge g^{R}(0)=\sup_{x\in I}\{f(x)\}\wedge \sup_{x\in I}\{g(x)\}=1.
$$
      \item[{\color{blue}(b.2)}] $\eta_{f, g}>0$. From Proposition~\ref{Pro-L}, it follows that
      \begin{align*}
      &\quad (f\hollowstar g)^{L_{\mathrm{w}}}(\eta_{f, g})\\
      &=\sup_{t\in [0, \eta_{f, g})}\{(f\hollowstar g)(t)\}\\
      &=\sup_{t\in [0, \eta_{f, g})}\{f^{L}(t)\}\vee \sup_{t\in [0, \eta_{f, g})}\{g^{L}(t)\}\\
      &=f^{L_{\mathrm{w}}}(\eta_{f, g})\vee g^{L_{\mathrm{w}}}(\eta_{f, g}).
      \end{align*}
      If $f^{L_{\mathrm{w}}}(\eta_{f, g})\vee g^{L_{\mathrm{w}}}(\eta_{f, g})=1$, then clearly $f\hollowstar g$
      is normal. If $f^{L_{\mathrm{w}}}(\eta_{f, g})\vee g^{L_{\mathrm{w}}}(\eta_{f, g})<1$,
      noting that $1=\sup_{t\in I}\{f(t)\}
      =f^{L_{\mathrm{w}}}(\eta_{f, g})\vee f^{R}(\eta_{f, g})$ and $1=\sup_{t\in I}\{g(t)\}
      =g^{L_{\mathrm{w}}}(\eta_{f, g})\vee g^{R}(\eta_{f, g})$, we have
      $$
      f^{R}(\eta_{f, g})=g^{R}(\eta_{f, g})=1,
      $$
      which, together with $\eta_{f, g}=\xi_{f, g}$, implies that
      $$
      (f\hollowstar g)(\eta_{f, g})=1.
      $$
      \end{enumerate}
  \end{enumerate}
  Thus, $f\hollowstar g$ is normal.
\end{enumerate}
\end{proof}

\begin{remark}\label{R-24}
\begin{enumerate}[(i)]
  \item Proposition~\ref{Closed-Lemma} shows that the binary operation
$\hollowstar$ is closed on $\mathbf{L}$, i.e.,
$\hollowstar(\mathbf{L}^2) \subset \mathbf{L}$.
  \item From the proof of Proposition~\ref{Closed-Lemma}, it follows that, for $f, g\in \mathbf{L}$,
if $\eta_{f, g}=\xi_{f, g}$, then $(f\hollowstar g)^{L_{\mathrm{w}}}(\xi_{f, g})=1$
or $(f\hollowstar g)(\xi_{f, g})=1$.
\end{enumerate}
\end{remark}

\begin{proposition}\label{LR*-Thm}
For $f, g\in \mathbf{L}\backslash \{\bm{1}_{\{1\}}\}$,
\begin{align}
(f\hollowstar g)^{L}(t)&=
\begin{cases}
f^{L}(t)\vee g^{L}(t), & t\in [0, \eta_{f, g}),\\
1, & t\in [\eta_{f, g}, 1],
\end{cases}\label{eq-LP}\\
(f\hollowstar g)^{R}(t)&=
\begin{cases}
1, & t\in [0, \xi_{f, g}),\\
f^{R}(\xi_{f, g})\wedge g^{R}(\xi_{f, g}), & t=\xi_{f, g}, \\
0, & t\in (\xi_{f, g}, 1].
\end{cases}\label{eq-LQ}
\end{align}
\end{proposition}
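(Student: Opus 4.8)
The plan is to abbreviate $h=f\hollowstar g$ and read off the two envelopes directly from the piecewise formula~\eqref{xin-operation}, splitting the argument according to the position of $t$ relative to $\eta_{f,g}$ (for $h^{L}$) and to $\xi_{f,g}$ (for $h^{R}$). Throughout I will use three facts already available: $h\in\mathbf{L}$ (Proposition~\ref{Closed-Lemma}); $h$ is increasing on $[0,\xi_{f,g})$ (noted right after Definition~\ref{bingstar-operation}); and, crucially, the normality information recorded in Remark~\ref{R-24}(ii), namely that when $\eta_{f,g}=\xi_{f,g}$ one has $h^{L_{\mathrm{w}}}(\xi_{f,g})=1$ or $h(\xi_{f,g})=1$. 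Since $h$ takes values in $I$, in each branch it suffices to bound the relevant supremum below by the asserted value.

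First I would compute $h^{L}(t)=\sup\{h(y)\mid y\leq t\}$. For $t\in[0,\eta_{f,g})$ every $y\leq t$ lies in the first branch of~\eqref{xin-operation}, so $h^{L}(t)=\sup_{y\leq t}\{f^{L}(y)\vee g^{L}(y)\}=\sup_{y\leq t}\{f^{L}(y)\}\vee\sup_{y\leq t}\{g^{L}(y)\}$, and the idempotency $(f^{L})^{L}=f^{L}$, $(g^{L})^{L}=g^{L}$ from Proposition~\ref{Main-Prop}(2) collapses this to $f^{L}(t)\vee g^{L}(t)$. For $t\in[\eta_{f,g},1]$ I must show $h^{L}(t)=1$. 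If $\eta_{f,g}<\xi_{f,g}$ this is immediate, since $\eta_{f,g}\leq t$ and $h(\eta_{f,g})=1$; if $\eta_{f,g}=\xi_{f,g}$, then $h^{L}(t)\geq h^{L}(\eta_{f,g})=h^{L_{\mathrm{w}}}(\eta_{f,g})\vee h(\eta_{f,g})=1$ by Remark~\ref{R-24}(ii).

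Next I would compute $h^{R}(t)=\sup\{h(y)\mid y\geq t\}$, working from the right. For $t\in(\xi_{f,g},1]$ the fourth branch gives $h\equiv 0$ on $[t,1]$, so $h^{R}(t)=0$; and for $t=\xi_{f,g}$ only $y=\xi_{f,g}$ contributes a possibly positive value while every $y>\xi_{f,g}$ gives $0$, whence $h^{R}(\xi_{f,g})=f^{R}(\xi_{f,g})\wedge g^{R}(\xi_{f,g})$. For $t\in[0,\xi_{f,g})$ I must show $h^{R}(t)=1$. When $\eta_{f,g}<\xi_{f,g}$ there is always a point $y\geq t$ in the plateau $[\eta_{f,g},\xi_{f,g})$ on which $h=1$ (take $y=\max\{t,\eta_{f,g}\}$), so $h^{R}(t)=1$. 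When $\eta_{f,g}=\xi_{f,g}$ the plateau is empty, and here I would use monotonicity: since $h$ is increasing on $[0,\xi_{f,g})$, the left tail satisfies $\sup_{t\leq y<\xi_{f,g}}\{h(y)\}=\sup_{0\leq y<\xi_{f,g}}\{h(y)\}=h^{L_{\mathrm{w}}}(\xi_{f,g})$, so that $h^{R}(t)=h^{L_{\mathrm{w}}}(\xi_{f,g})\vee h(\xi_{f,g})=1$ again by Remark~\ref{R-24}(ii).

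I expect the degenerate case $\eta_{f,g}=\xi_{f,g}$ to be the only real obstacle: when the central plateau collapses to a point, the value $1$ must still be ``seen'' by both envelopes, and there is no longer an interior point carrying $h=1$ to lean on. The resolution in both computations is the same---invoke Remark~\ref{R-24}(ii) to locate the value $1$ either strictly to the left of $\xi_{f,g}$ or exactly at $\xi_{f,g}$, and, on the $h^{R}$ side, use the monotonicity of $h$ on $[0,\xi_{f,g})$ to drag that witnessing value into the tail $[t,\xi_{f,g})$ regardless of how close $t$ sits to $\xi_{f,g}$. The remaining branches are routine bookkeeping against the four cases of~\eqref{xin-operation}.
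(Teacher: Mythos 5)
Your proof is correct and follows essentially the same route as the paper's (which simply splits into the cases $\eta_{f,g}<\xi_{f,g}$ and $\eta_{f,g}=\xi_{f,g}$, citing the monotonicity of $f^{L}\vee g^{L}$ and Remark~\ref{R-24}(ii)); you have merely written out the branch-by-branch details that the paper leaves implicit.
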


\begin{proof}
(1) If $\eta_{f, g}<\xi_{f, g}$, since $f^{L}(t)\vee g^{L}(t)$ is increasing,
{\color{blue}(\ref{xin-operation}) evidently implies \eqref{eq-LP} and \eqref{eq-LQ}.}

\medskip

(2) If $\eta_{f, g}=\xi_{f, g}$, {\color{blue}the result follows from Remark \ref{R-24}
(ii) and (\ref{xin-operation}).}
\end{proof}

\begin{theorem}\label{hollowstar-O1}
$\hollowstar$ satisfies (O1).
\end{theorem}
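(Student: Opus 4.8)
The plan is to establish the identity $f\hollowstar g=g\hollowstar f$ by reading it off directly from Definition~\ref{bingstar-operation}, exploiting that every ingredient entering that definition is symmetric under interchange of its two arguments. The work splits exactly along the three clauses of the definition, so the first step is to confirm that the case split itself respects the swap of $f$ and $g$, and then to check symmetry of the output within each clause.

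First I would dispose of the two degenerate clauses. If $f=\bm{1}_{\{1\}}$, then clause (1) gives $f\hollowstar g=g$; after swapping, the hypothesis $f=\bm{1}_{\{1\}}$ now places $\bm{1}_{\{1\}}$ in the second slot of $g\hollowstar f$, so clause (2) applies and yields $g\hollowstar f=g$ as well. The case $g=\bm{1}_{\{1\}}$ with $f\neq\bm{1}_{\{1\}}$ is the mirror image, handled by clauses (2) and (1). Thus whenever at least one argument equals $\bm{1}_{\{1\}}$ the two products agree, and these clauses are in fact already written symmetrically in the definition. The remaining, main case is $f\neq\bm{1}_{\{1\}}$ and $g\neq\bm{1}_{\{1\}}$, where both products are computed by the piecewise formula \eqref{xin-operation}. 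The key observation is that the two control parameters are symmetric: since $\wedge$ is commutative, $\eta_{f,g}=\eta_{g,f}$ and $\xi_{f,g}=\xi_{g,f}$, so the four subintervals $[0,\eta_{f,g})$, $[\eta_{f,g},\xi_{f,g})$, $\{\xi_{f,g}\}$, and $(\xi_{f,g},1]$ used to define $f\hollowstar g$ coincide with those used for $g\hollowstar f$. On each piece the prescribed value is then manifestly symmetric: on $[0,\eta_{f,g})$ it is $f^{L}(t)\vee g^{L}(t)=g^{L}(t)\vee f^{L}(t)$, on $[\eta_{f,g},\xi_{f,g})$ it is the constant $1$, at $t=\xi_{f,g}$ it is $f^{R}(\xi_{f,g})\wedge g^{R}(\xi_{f,g})=g^{R}(\xi_{f,g})\wedge f^{R}(\xi_{f,g})$, and on $(\xi_{f,g},1]$ it is $0$. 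Hence $(f\hollowstar g)(t)=(g\hollowstar f)(t)$ for every $t\in I$.

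I expect no genuine obstacle here, as the statement is a direct unwinding of the definition once one notices that $\eta$, $\xi$, and the building blocks $f^{L}\vee g^{L}$ and $f^{R}\wedge g^{R}$ are all symmetric by commutativity of $\wedge$ and $\vee$. The only point that warrants a moment's care is the bookkeeping in the degenerate cases, namely verifying that swapping $f$ and $g$ transfers a $\bm{1}_{\{1\}}$ argument between clauses (1) and (2) in a consistent way so that the outputs still match; this is routine but is the one place where a careless reading could mistakenly mismatch clauses.
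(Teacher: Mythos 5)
Your proposal is correct and follows essentially the same route as the paper: dispose of the clauses where an argument equals $\bm{1}_{\{1\}}$ directly, then in the main case observe that $\eta_{f,g}$, $\xi_{f,g}$, and the piecewise values in \eqref{xin-operation} are all symmetric in $f$ and $g$ by commutativity of $\vee$ and $\wedge$. Your extra care about how swapping arguments moves between clauses (1) and (2) is a harmless elaboration of what the paper treats as obvious.
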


\begin{proof}
For $f, g\in \mathbf{L}$,

\medskip

{\color{blue}(A.1)} if $f=\bm{1}_{\{1\}}$ or $g=\bm{1}_{\{1\}}$, then clearly $f\hollowstar g=g\hollowstar f$;

\medskip

{\color{blue}(A.2)} if $f\neq \bm{1}_{\{1\}}$ and $g\neq \bm{1}_{\{1\}}$, then
$$
(f\hollowstar g)(t)=\begin{cases}
f^{L}(t)\vee g^{L}(t), & t\in [0, \eta_{f, g}),\\
1, & t\in [\eta_{f, g}, \xi_{f, g}),\\
f^{R}(\xi_{f, g})\wedge g^{R}(\xi_{f, g}), & t=\xi_{f, g},\\
0, & t\in (\xi_{f, g}, 1],
\end{cases}
$$
{\color{blue}and the commutativity of $\hollowstar$
follows from the commutativity of $\vee$ and $\wedge$.}
\end{proof}

{\color{blue}\begin{lemma}\label{no=1}
For $f, g\in \mathbf{L}\backslash \{\bm{1}_{\{1\}}\}$, $f\hollowstar g \neq \bm{1}_{\{1\}}$.
\end{lemma}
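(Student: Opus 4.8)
The plan is to argue by contradiction: assume $f\hollowstar g=\bm{1}_{\{1\}}$ and show this forces $f$ (or $g$) to equal $\bm{1}_{\{1\}}$. The cleanest route passes to the left shoulder, since Proposition~\ref{LR*-Thm} supplies an explicit formula for $(f\hollowstar g)^{L}$ that is easy to confront with $(\bm{1}_{\{1\}})^{L}$.

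First I would record the target profile. A direct computation from the definition of the $L$-operator gives $(\bm{1}_{\{1\}})^{L}(t)=0$ for $t\in[0,1)$ and $(\bm{1}_{\{1\}})^{L}(1)=1$. Next, because $f,g\in\mathbf{L}\backslash\{\bm{1}_{\{1\}}\}$, Proposition~\ref{LR*-Thm} applies and yields, via \eqref{eq-LP}, that $(f\hollowstar g)^{L}(t)=1$ for every $t\in[\eta_{f,g},1]$ and $(f\hollowstar g)^{L}(t)=f^{L}(t)\vee g^{L}(t)$ for $t\in[0,\eta_{f,g})$. If $f\hollowstar g=\bm{1}_{\{1\}}$, these two profiles must coincide pointwise. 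Comparing them on the interval $[\eta_{f,g},1)$ forces that interval to be empty (otherwise $1=0$ there), i.e. $\eta_{f,g}=1$; and then comparing on $[0,1)=[0,\eta_{f,g})$ forces $f^{L}(t)\vee g^{L}(t)=0$, hence in particular $f^{L}(t)=0$, for every $t\in[0,1)$.

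Finally I would transfer this back to $f$ itself. Since $f\leq f^{L}$ by Proposition~\ref{Main-Prop}(1), the function $f$ vanishes on $[0,1)$, so normality of $f$ ($\sup_{x\in I}f(x)=1$) forces $f(1)=1$ and hence $f=\bm{1}_{\{1\}}$, contradicting the hypothesis $f\in\mathbf{L}\backslash\{\bm{1}_{\{1\}}\}$. This completes the argument.

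I expect the main (and essentially only) obstacle to be the bookkeeping at the endpoint $t=\eta_{f,g}=1$: the two shoulder profiles \emph{do} agree at $t=1$, so the contradiction must be extracted strictly below $1$, and the comparison has to be carried out on $[0,1)$ rather than on all of $[0,1]$. An alternative, more hands-on proof avoids Proposition~\ref{LR*-Thm} by exhibiting a single point of $[0,1)$ at which $f\hollowstar g$ is positive — using the plateau value $1$ on $[\eta_{f,g},\xi_{f,g})$ when $\eta_{f,g}<\xi_{f,g}$, the value $f^{R}(0)\wedge g^{R}(0)=1$ when $\eta_{f,g}=\xi_{f,g}=0$, and Remark~\ref{R-24}(ii) together with a point $y_{0}<1$ where $f(y_{0})>0$ in the remaining case — but this route splits into several subcases, whereas the $L$-operator argument needs none.
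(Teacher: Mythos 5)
Your proof is correct and follows essentially the same route as the paper's: assume $f\hollowstar g=\bm{1}_{\{1\}}$, force $\eta_{f,g}=1$, conclude $f^{L}(t)\vee g^{L}(t)=0$ on $[0,1)$, and then use $f\leq f^{L}$ together with normality to get $f=\bm{1}_{\{1\}}$, a contradiction. The only cosmetic difference is that you enter through the shoulder formula of Proposition~\ref{LR*-Thm} (eq.~\eqref{eq-LP}) while the paper reads the same facts directly off the definition \eqref{xin-operation}; your endpoint bookkeeping at $t=1$ is handled correctly.
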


\begin{proof}
Suppose on the contrary that $f\hollowstar g=\bm{1}_{\{1\}}$. Then, $\eta_{f, g}=\xi_{f, g}=1$ and
$f^{L}(t)\vee g^{L}(t)=0$ for $t\in [0, 1)$. Since $f^{L}\geq f$ and $g^{L}\geq g$,
we have $f(t)=g(t)=0$ for $t\in [0, 1)$. This, together with $f, g\in \mathbf{L}$, implies that
$$
f=g=\bm{1}_{\{1\}},
$$
which is a contradiction.
\end{proof}
}

\begin{theorem}\label{S-4B}
$\hollowstar$ satisfies (O2).
\end{theorem}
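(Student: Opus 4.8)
The plan is to prove (O2) by showing that the triple product $(f\hollowstar g)\hollowstar h$ is a \emph{symmetric} function of its three arguments, and then extracting associativity from the commutativity already established in Theorem~\ref{hollowstar-O1}. First I would dispose of the degenerate cases: if one of $f,g,h$ equals $\bm{1}_{\{1\}}$, then both $(f\hollowstar g)\hollowstar h$ and $f\hollowstar(g\hollowstar h)$ collapse to the $\hollowstar$-product of the remaining two factors, since parts (1)--(2) of Definition~\ref{bingstar-operation} make $\bm{1}_{\{1\}}$ a two-sided neutral element. Hence I may assume $f,g,h\in\mathbf{L}\setminus\{\bm{1}_{\{1\}}\}$; by Lemma~\ref{no=1} the intermediate products $f\hollowstar g$ and $g\hollowstar h$ again lie in $\mathbf{L}\setminus\{\bm{1}_{\{1\}}\}$, so the explicit formula \eqref{xin-operation} and Proposition~\ref{LR*-Thm} apply at every stage.

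For $\phi\in\mathbf{L}\setminus\{\bm{1}_{\{1\}}\}$ write $a_\phi=\inf\{x\in I\mid \phi^{L}(x)=1\}$ and $b_\phi=\sup\{x\in I\mid \phi^{R}(x)=1\}$, so that $\eta_{f,g}=a_f\wedge a_g$ and $\xi_{f,g}=b_f\wedge b_g$. The crucial reduction is to read off from Proposition~\ref{LR*-Thm} the ``profile data'' of $P:=f\hollowstar g$: the formula for $(f\hollowstar g)^{L}$ gives $a_{P}=\eta_{f,g}=a_f\wedge a_g$ (because $f^{L}(t)\vee g^{L}(t)<1$ for $t<a_f\wedge a_g$), while the formula for $(f\hollowstar g)^{R}$ gives $b_{P}=\xi_{f,g}=b_f\wedge b_g$ together with $P^{R}(t)=1$ for $t<b_P$, $P^{R}(b_P)=f^{R}(b_P)\wedge g^{R}(b_P)$, and $P^{R}(t)=0$ for $t>b_P$. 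I would also record two elementary monotonicity facts, both immediate from the definitions of $a_\phi,b_\phi$ and the monotonicity of $\phi^{L},\phi^{R}$: namely $\phi^{L}(t)<1$ whenever $t<a_\phi$, and $\phi^{R}(t)=1$ whenever $t<b_\phi$.

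With this in hand I would compute $(f\hollowstar g)\hollowstar h=P\hollowstar h$ pointwise from \eqref{xin-operation}. Setting $\alpha=a_f\wedge a_g\wedge a_h$ and $\beta=b_f\wedge b_g\wedge b_h$, the profile data give $\eta_{P,h}=a_P\wedge a_h=\alpha$ and $\xi_{P,h}=b_P\wedge b_h=\beta$, with $\alpha\le\beta$ by Lemma~\ref{<-Lemma}. Evaluating the four pieces of \eqref{xin-operation} and substituting the profile data for $P$ (using $P^{L}(t)=f^{L}(t)\vee g^{L}(t)$ for $t<\alpha\le a_P$), I expect to obtain
\[
\bigl((f\hollowstar g)\hollowstar h\bigr)(t)=
\begin{cases}
f^{L}(t)\vee g^{L}(t)\vee h^{L}(t), & t\in[0,\alpha),\\
1, & t\in[\alpha,\beta),\\
f^{R}(\beta)\wedge g^{R}(\beta)\wedge h^{R}(\beta), & t=\beta,\\
0, & t\in(\beta,1],
\end{cases}
\]
an expression manifestly symmetric in $f,g,h$.

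The main obstacle is the value at the single point $t=\beta$, where the $R$-profile of $P$ has been truncated to $0$ beyond $b_P$ and therefore does \emph{not} coincide with $f^{R}\wedge g^{R}$. There $P\hollowstar h$ returns $P^{R}(\beta)\wedge h^{R}(\beta)$, and I must reconcile this with the symmetric value $f^{R}(\beta)\wedge g^{R}(\beta)\wedge h^{R}(\beta)$. I would split on whether $\beta=b_P$ or $\beta<b_P$: in the first case $P^{R}(\beta)=f^{R}(\beta)\wedge g^{R}(\beta)$ directly; in the second case $\beta<b_f\wedge b_g$ forces $f^{R}(\beta)=g^{R}(\beta)=1$ by the monotonicity fact above, so $P^{R}(\beta)=1$ and both expressions reduce to $h^{R}(\beta)$. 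Once the displayed symmetric formula is confirmed, writing $\Phi(f,g,h)=(f\hollowstar g)\hollowstar h$ we have $\Phi(f,g,h)=\Phi(g,h,f)$, and commutativity (Theorem~\ref{hollowstar-O1}) yields $f\hollowstar(g\hollowstar h)=(g\hollowstar h)\hollowstar f=\Phi(g,h,f)=\Phi(f,g,h)=(f\hollowstar g)\hollowstar h$, which is exactly (O2).
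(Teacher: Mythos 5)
Your proof is correct, and its skeleton coincides with the paper's: dispose of the $\bm{1}_{\{1\}}$ cases, invoke Lemma~\ref{no=1} so that Proposition~\ref{LR*-Thm} applies to the intermediate products, observe that $\eta_{f\hollowstar g,h}$ and $\xi_{f\hollowstar g,h}$ reduce to the triple infima $\alpha$ and $\beta$, and then fight the battle at the single point $t=\beta$ where the truncated $R$-profile of $f\hollowstar g$ differs from $f^{R}\wedge g^{R}$. Where you genuinely diverge is the endgame. The paper computes \emph{both} $(f\hollowstar g)\hollowstar h$ and $f\hollowstar(g\hollowstar h)$ and compares them at $t=\xi$ via a three-way case split on $\xi_{f,g}$ versus $\xi_{g,h}$; you instead compute only the left-associated product, show it equals a closed form manifestly symmetric in $(f,g,h)$, and then recover associativity from Theorem~\ref{hollowstar-O1} via $f\hollowstar(g\hollowstar h)=(g\hollowstar h)\hollowstar f$. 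This halves the computation, replaces the paper's three cases with a cleaner two-way split ($\beta=b_{P}$ versus $\beta<b_{P}$, with $b_P=\xi_{f,g}$), and your reconciliation at $t=\beta$ is sound: when $\beta<b_f\wedge b_g$ the monotonicity of $f^{R},g^{R}$ forces $f^{R}(\beta)=g^{R}(\beta)=1$, so both expressions collapse to $h^{R}(\beta)$. You also take the trouble to justify $a_{P}=\eta_{f,g}$ and $b_{P}=\xi_{f,g}$ (using normality of $f\hollowstar g$ from Proposition~\ref{Closed-Lemma} for the edge case $\xi_{f,g}=0$), which the paper asserts as ``clearly.'' The symmetric-closed-form idea is a modest but real improvement in economy; the paper's version has the minor advantage of exhibiting the value $h^{R}(\xi)$ (resp.\ $f^{R}(\xi)$) explicitly in each asymmetric case.
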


\begin{proof}
For $f, g, h\in \mathbf{L}$,

\medskip

{\color{blue}(B.1)} if one of $f$, $g$, and $h$ is equal to $\bm{1}_{\{1\}}$,
then it is easy to verify that $(f\hollowstar g) \hollowstar h
  =f\hollowstar (g\hollowstar h)$;

\medskip

{\color{blue}(B.2)} if none of $f$, $g$, and $h$ are equal to $\bm{1}_{\{1\}}$, then
$$
(f\hollowstar g)(t)=\begin{cases}
f^{L}(t)\vee g^{L}(t), & t\in [0, \eta_{f, g}),\\
1, & t\in [\eta_{f, g}, \xi_{f, g}),\\
f^{R}(\xi_{f, g})\wedge g^{R}(\xi_{f, g}), & t=\xi_{f, g},\\
0, & t\in (\xi_{f, g}, 1],
\end{cases}
$$
and
$$
(g\hollowstar h)(t)=\begin{cases}
g^{L}(t)\vee h^{L}(t), & t\in [0, \eta_{g, h}),\\
1, & t\in [\eta_{g, h}, \xi_{g, h}),\\
g^{R}(\xi_{g, h})\wedge h^{R}(\xi_{g, h}), & t=\xi_{g, h},\\
0, & t\in (\xi_{g, h}, 1].
\end{cases}
$$

{\color{blue}By Lemma~\ref{no=1}, we have $f\hollowstar g\neq \bm{1}_{\{1\}}$
and $g\hollowstar h\neq \bm{1}_{\{1\}}$.}

{\color{blue}Proposition~\ref{LR*-Thm} implies that}
\begin{align*}
(f\hollowstar g)^{L}(t)&=
\begin{cases}
f^{L}(t)\vee g^{L}(t), & t\in [0, \eta_{f, g}),\\
1, & t\in [\eta_{f, g}, 1],
\end{cases}\\
(f\hollowstar g)^{R}(t)&=
\begin{cases}
1, & t\in [0, \xi_{f, g}),\\
f^{R}(\xi_{f, g})\wedge g^{R}(\xi_{f, g}), & t=\xi_{f, g}, \\
0, & t\in (\xi_{f, g}, 1],
\end{cases}
\end{align*}
and
\begin{align*}
(g\hollowstar h)^{L}(t)&=
\begin{cases}
g^{L}(t)\vee h^{L}(t), & t\in [0, \eta_{g, h}),\\
1, & t\in [\eta_{g, h}, 1],
\end{cases}\\
(g\hollowstar h)^{R}(t)&=
\begin{cases}
1, & t\in [0, \xi_{g, h}),\\
g^{R}(\xi_{g, h})\wedge h^{R}(\xi_{g, h}), & t=\xi_{g, h}, \\
0, & t\in (\xi_{g, h}, 1].
\end{cases}
\end{align*}
Since $f\hollowstar g, g\hollowstar h, f, h\in \mathbf{L}\backslash\{\bm{1}_{\{1\}}\}$, we have
\begin{equation}\label{A-1}
\begin{split}
&\quad((f\hollowstar g)\hollowstar h)(t)\\
&=\begin{cases}
(f\hollowstar g)^{L}(t)\vee h^{L}(t), & t\in [0, \eta_{f\hollowstar g, h}),\\
1, & t\in [\eta_{f\hollowstar g, h}, \xi_{f\hollowstar g, h}),\\
(f\hollowstar g)^{R}(\xi_{f\hollowstar g, h})\wedge h^{R}(\xi_{f\hollowstar g, h}), & t=\xi_{f\hollowstar g, h},\\
0, & t\in (\xi_{f\hollowstar g, h}, 1],
\end{cases}
\end{split}
\end{equation}
and
\begin{equation}\label{A-2}
\begin{split}
&\quad(f\hollowstar (g\hollowstar h))(t)\\
&=\begin{cases}
f^{L}(t)\vee (g\hollowstar h)^{L}(t), & t\in [0, \eta_{f, g\hollowstar h}),\\
1, & t\in [\eta_{f, g\hollowstar h}, \xi_{f, g\hollowstar h}),\\
f^{R}(\xi_{f, g\hollowstar h})\wedge (g\hollowstar h)^{R}(\xi_{f, g\hollowstar h}), & t=\xi_{f, g\hollowstar h}, \\
0, & t\in (\xi_{f, g\hollowstar h}, 1].
\end{cases}
\end{split}
\end{equation}
Clearly,
\begin{align*}
\eta&:=\eta_{f\hollowstar g, h}=\eta_{f, g\hollowstar h}\\
&\ =\inf\{x\in I\mid f^{L}(x)=1\}\\
&\quad\wedge \inf\{x\in I\mid g^{L}(x)=1\}\\
&\quad\wedge \inf\{x\in I\mid h^{L}(x)=1\}\\
&\ = \eta_{f, g}\wedge \eta_{g, h},
\end{align*}
and
\begin{align*}
\xi&:=\xi_{f\hollowstar g, h}=\xi_{f, g\hollowstar h}\\
&\ =\sup\{x\in I\mid f^{R}(x)=1\}\\
& \quad\wedge \sup\{x\in I\mid g^{R}(x)=1\}\\
& \quad\wedge \sup\{x\in I\mid h^{R}(x)=1\}\\
&\ = \xi_{f, g}\wedge \xi_{g, h}.
\end{align*}
Thus, for $t\in [0, \eta)$,
$$
((f\hollowstar g)\hollowstar h)(t)=(f^{L}(t)\vee g^{L}(t)) \vee h^{L}(t),
$$
and
$$
(f\hollowstar (g\hollowstar h))(t)=f^{L}(t)\vee (g^{L}(t) \vee h^{L}(t)),
$$
{\color{blue}and the associativity holds.}
Clearly, for $t\in [\eta, \xi)\cup (\xi, 1]$,
$$
((f\hollowstar g)\hollowstar h)(t)=(f\hollowstar (g\hollowstar h))(t).
$$

\medskip

{\color{blue}To finish the proof we have to show that
$((f\hollowstar g)\hollowstar h)(\xi)=(f\hollowstar (g\hollowstar h))(\xi)$.}

\medskip

Consider the following three cases:

\medskip

{\color{blue}(B.2.1)} If $\xi_{f, g}=\xi_{g, h}$, then $\xi=\xi_{f, g}=\xi_{g, h}$, implying that
$$
(f\hollowstar g)^{R}(\xi)=f^{R}(\xi)\wedge g^{R}(\xi),
$$
and
$$
(g\hollowstar h)^{R}(\xi)=g^{R}(\xi)\wedge h^{R}(\xi).
$$
{\color{blue}Then, (\ref{A-1}) and (\ref{A-2}) yield}
$$
((f\hollowstar g)\hollowstar h)(\xi)=f^{R}(\xi)\wedge g^{R}(\xi)\wedge h^{R}(\xi)
= (f\hollowstar (g\hollowstar h))(\xi).
$$

{\color{blue}(B.2.2)} If $\xi_{f, g}<\xi_{g, h}$, then $\xi_{f, g}=\sup\{x\in I\mid f^{R}(x)=1\}<\sup\{x\in I\mid g^{R}(x)=1\}$
(as $\sup\{x\in I\mid f^{R}(x)=1\}\geq\sup\{x\in I\mid g^{R}(x)=1\}$ implies that $\xi_{f, g}=\sup\{x\in I\mid g^{R}(x)=1\}
\geq \sup\{x\in I\mid g^{R}(x)=1\}\wedge \sup\{x\in I\mid h^{R}(x)=1\}=\xi_{g, h}$), which means
that there exists $\hat{x}>\xi_{f, g}$ such that $g^{R}(\hat{x})=1$.
Thus,
$$
g^{R}(\xi_{f, g})\geq g^{R}(\hat{x})=1.
$$
{\color{blue}Therefore, since $\xi=\xi_{f, g}\wedge \xi_{g, h}=\xi_{f, g}$, we get}
\begin{equation}\label{A-3}
(f\hollowstar g)^{R}(\xi)=(f\hollowstar g)^{R}(\xi_{f, g})=f^{R}(\xi_{f, g})\wedge g^{R}(\xi_{f, g})=f^{R}(\xi_{f, g}).
\end{equation}
From $\xi_{f, g}<\xi_{g, h}\leq \sup\{x\in I\mid h^{R}(x)=1\}$, it follows that there exists
$x'>\xi_{f, g}$ such that $h^{R}(x')=1$, implying that
\begin{equation}\label{QST-1}
h^{R}(\xi)=h^{R}(\xi_{f, g})\geq h^{R}(x')=1.
\end{equation}
\eqref{QST-1} together with (\ref{A-1}) and (\ref{A-3}) implies that
$$
((f\hollowstar g)\hollowstar h)(\xi)=f^{R}(\xi).
$$

Since $\xi=\xi_{f, g}<\xi_{g, h}$, then we have $(g\hollowstar h)^{R}(\xi)=1$, which together with (\ref{A-2})
implies that
$$
(f\hollowstar (g\hollowstar h))(\xi)=f^{R}(\xi)\wedge (g\hollowstar h)^{R}(\xi)=f^{R}(\xi).
$$
Therefore,
$$
((f\hollowstar g)\hollowstar h)(\xi)=(f\hollowstar (g\hollowstar h))(\xi)=f^{R}(\xi).
$$

{\color{blue}(B.2.3)} {\color{blue}If $\xi_{f, g}>\xi_{g, h}$,
then similarly as in the previous case we can show that
$$
((f\hollowstar g)\hollowstar h)(\xi)=(f\hollowstar (g\hollowstar h))(\xi)=h^{R}(\xi).
$$
}

Summing up (B.2.1)--(B.2.3), we have
$$
((f\hollowstar g)\hollowstar h)(\xi)=(f\hollowstar (g\hollowstar h))(\xi).
$$
\end{proof}

\begin{theorem}\label{hollowstar-O3}
$\hollowstar$ satisfies (O3).
\end{theorem}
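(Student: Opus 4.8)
The plan is to read off (O3) directly from Definition~\ref{bingstar-operation}, without invoking the piecewise formula \eqref{xin-operation} at all. Axiom (O3) requires $f\hollowstar \bm{1}_{\{1\}}=f$ for every $f\in\mathbf{L}$, and this is precisely clause (2) of the definition of $\hollowstar$: whenever the second argument equals $\bm{1}_{\{1\}}$, the operation is declared to return the first argument unchanged. Thus the neutral-element property is hard-wired into the definition, and the elaborate $\eta_{f,g}$/$\xi_{f,g}$ case split of clause (3) never comes into play.

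The only point I would pause on is that this reading is unambiguous. When $f\neq\bm{1}_{\{1\}}$, only clause (2) applies and yields $f\hollowstar\bm{1}_{\{1\}}=f$ at once. When $f=\bm{1}_{\{1\}}$, both clauses (1) and (2) are triggered simultaneously, so I would note that they are consistent: clause (1) gives $\bm{1}_{\{1\}}\hollowstar\bm{1}_{\{1\}}=\bm{1}_{\{1\}}$, and clause (2) gives the same value $f=\bm{1}_{\{1\}}$. Hence $f\hollowstar\bm{1}_{\{1\}}=f$ holds in every case, and the overlap in the definition causes no inconsistency.

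I expect no genuine obstacle here. Unlike associativity (O2), whose verification rests on the bookkeeping identities $\eta=\eta_{f,g}\wedge\eta_{g,h}$ and $\xi=\xi_{f,g}\wedge\xi_{g,h}$ together with the delicate three-way analysis of the value at $t=\xi$, axiom (O3) is settled by the defining convention alone and is the most routine of the seven axioms. Accordingly I would keep the argument to a single sentence citing clause (2) of Definition~\ref{bingstar-operation}, optionally adding the consistency remark above for completeness.
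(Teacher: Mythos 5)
Your argument is correct and matches the paper exactly: the paper's proof is the single sentence that (O3) ``follows directly from (1) and (2) of Definition~\ref{bingstar-operation}.'' Your added remark that clauses (1) and (2) overlap consistently when $f=\bm{1}_{\{1\}}$ is a harmless refinement of the same observation.
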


\begin{proof}
This follows directly from (1) and (2) of Definition~\ref{bingstar-operation}.
\end{proof}

\begin{theorem}
$\hollowstar$ satisfies (O4).
\end{theorem}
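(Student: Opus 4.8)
The plan is to reduce (O4) to the order characterization of Theorem~\ref{order-theorem}, which states that for $\phi, \psi \in \mathbf{L}$ one has $\phi \sqsubseteq \psi$ if and only if $\phi^{L} \geq \psi^{L}$ and $\phi^{R} \leq \psi^{R}$. Thus, assuming $f \sqsubseteq g$ and fixing $h \in \mathbf{L}$, it suffices to prove the two inequalities $(f \hollowstar h)^{L} \geq (g \hollowstar h)^{L}$ and $(f \hollowstar h)^{R} \leq (g \hollowstar h)^{R}$. First I would unpack the hypothesis: again by Theorem~\ref{order-theorem}, $f \sqsubseteq g$ gives $f^{L} \geq g^{L}$ and $f^{R} \leq g^{R}$. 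Since $g^{L}(x) = 1$ then forces $f^{L}(x) = 1$, we get $\{x \in I \mid g^{L}(x) = 1\} \subseteq \{x \in I \mid f^{L}(x) = 1\}$ and hence $\inf\{x \in I \mid f^{L}(x) = 1\} \leq \inf\{x \in I \mid g^{L}(x) = 1\}$; symmetrically $\sup\{x \in I \mid f^{R}(x) = 1\} \leq \sup\{x \in I \mid g^{R}(x) = 1\}$. Taking the minimum with the corresponding quantities for $h$ and using the monotonicity of $\wedge$, these yield the two key relations $\eta_{f,h} \leq \eta_{g,h}$ and $\xi_{f,h} \leq \xi_{g,h}$.

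Next I would clear away the degenerate cases. If $h = \bm{1}_{\{1\}}$, then $f \hollowstar h = f \sqsubseteq g = g \hollowstar h$ by clauses (1)--(2) of Definition~\ref{bingstar-operation}. If $f = \bm{1}_{\{1\}}$, then since $\bm{1}_{\{1\}}$ is the maximum of $\mathfrak{L}$, the relation $f \sqsubseteq g$ forces $g = \bm{1}_{\{1\}}$, and both products equal $h$. The remaining boundary case is $g = \bm{1}_{\{1\}}$ with $f, h \neq \bm{1}_{\{1\}}$, where $g \hollowstar h = h$ and I must verify $f \hollowstar h \sqsubseteq h$. Here Proposition~\ref{LR*-Thm} gives $(f \hollowstar h)^{L} \geq h^{L}$ at once, while for the $R$-part the only delicate region is $t < \xi_{f,h}$, where $(f \hollowstar h)^{R}(t) = 1$; but $\xi_{f,h} \leq \sup\{x \in I \mid h^{R}(x) = 1\}$ together with the monotonicity of $h^{R}$ forces $h^{R}(t) = 1$ there, so $(f \hollowstar h)^{R} \leq h^{R}$ holds.

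For the main case $f, g, h \in \mathbf{L} \setminus \{\bm{1}_{\{1\}}\}$, Lemma~\ref{no=1} ensures that $f \hollowstar h$ and $g \hollowstar h$ also lie in $\mathbf{L} \setminus \{\bm{1}_{\{1\}}\}$, so the explicit formulas of Proposition~\ref{LR*-Thm} apply to both. Using $\eta_{f,h} \leq \eta_{g,h}$, I would compare $(f \hollowstar h)^{L}$ with $(g \hollowstar h)^{L}$ on the three pieces $[0, \eta_{f,h})$, $[\eta_{f,h}, \eta_{g,h})$, and $[\eta_{g,h}, 1]$: on the first piece the inequality reduces to $f^{L} \geq g^{L}$, and on the other two the left-hand value is already $1$. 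The $R$-transform comparison uses $\xi_{f,h} \leq \xi_{g,h}$, the values being $1$ below the thresholds and $0$ above them.

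The step I expect to be the main obstacle is the $R$-transform comparison near the two thresholds $\xi_{f,h} \leq \xi_{g,h}$, which splits according to whether they coincide. If $\xi_{f,h} = \xi_{g,h} =: \xi$, then at $t = \xi$ the comparison reduces to $f^{R}(\xi) \wedge h^{R}(\xi) \leq g^{R}(\xi) \wedge h^{R}(\xi)$, which follows from $f^{R} \leq g^{R}$. If $\xi_{f,h} < \xi_{g,h}$, then for $t \geq \xi_{f,h}$ the left-hand $R$-value is at most its boundary value and drops to $0$ beyond $\xi_{f,h}$, whereas the right-hand $R$-value stays equal to $1$ throughout $[\xi_{f,h}, \xi_{g,h})$, so the inequality holds comfortably; only the two boundary points $t = \xi_{f,h}$ and $t = \xi_{g,h}$ require a direct check. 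Combining the two transform inequalities and invoking Theorem~\ref{order-theorem} once more then gives $f \hollowstar h \sqsubseteq g \hollowstar h$, establishing (O4).
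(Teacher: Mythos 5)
Your proposal is correct and takes essentially the same route as the paper's proof: the same case split on which of $f$, $g$, $h$ equals $\bm{1}_{\{1\}}$, the same derivation of $\eta_{f,h}\leq\eta_{g,h}$ and $\xi_{f,h}\leq\xi_{g,h}$ from Theorem~\ref{order-theorem}, and the same piecewise comparison of the $L$- and $R$-transforms via Proposition~\ref{LR*-Thm}. Your handling of the $R$-transform near the thresholds is in fact slightly more explicit than the paper's, which disposes of that step in one line.
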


\begin{proof}
{\color{blue}We have to show that, for $f, g, h\in \mathbf{L}$ with $f\sqsubseteq g$,
$f\hollowstar h\sqsubseteq g\hollowstar h$. There are the following possible cases:}

\medskip

{\color{blue}(D.1)} if $h=\bm{1}_{\{1\}}$, then $f\hollowstar h=f\sqsubseteq g =g\hollowstar h$;

\medskip

{\color{blue}(D.2)} if $f=\bm{1}_{\{1\}}$, then $g=\bm{1}_{\{1\}}$ (as $f\sqsubseteq g$),
implying that $f\hollowstar h=h\sqsubseteq h =g\hollowstar h$;

\medskip

{\color{blue}(D.3)} if $g=\bm{1}_{\{1\}}$ and $f\neq \bm{1}_{\{1\}} \neq h$, then
  $$
(f\hollowstar h)(t)=
\begin{cases}
f^{L}(t)\vee h^{L}(t), & t\in [0, \eta_{f, h}),\\
1, & t\in [\eta_{f, h}, \xi_{f, h}),\\
f^{R}(\xi_{f, h})\wedge h^{R}(\xi_{f, h}), & t=\xi_{f, h},\\
0, & t\in (\xi_{f, h}, 1].
\end{cases}
$$
{\color{blue}By
Proposition~\ref{LR*-Thm},} one has
\begin{equation}
(f\hollowstar h)^{L}(t)=
\begin{cases}
f^{L}(t)\vee h^{L}(t), & t\in [0, \eta_{f, h}),\\
1, & t\in [\eta_{f, h}, 1],
\end{cases}
\end{equation}
and
\begin{equation}\label{O-4}
(f\hollowstar h)^{R}(t)=
\begin{cases}
1, & t\in [0, \xi_{f, h}),\\
f^{R}(\xi_{f, h})\wedge h^{R}(\xi_{f, h}), & t=\xi_{f, h},\\
0, & t\in (\xi_{f, h}, 1].
\end{cases}
\end{equation}
Clearly,
$$
(f\hollowstar h)^{L}\geq h^{L},
$$
and
\begin{equation}\label{e-*}
(f\hollowstar h)^{R}(\xi_{f, h}) \leq h^{R}(\xi_{f, h}).
\end{equation}
{\color{blue}Additionally,} for $t<\xi_{f, h}$, it follows from
$\xi_{f, h} \leq \sup\{x\in I\mid h^{R}(x)=1\}$ that there exists
$t<\hat{x}\leq \xi$ such that $h^{R}(\hat{x})=1$. Thus,
$h^{R}(t)\geq h^{R}(\hat{x})=1$ since $h^{R}$ is decreasing,
implying that, for $t\in [0, \xi)$,
{\color{blue}\begin{equation}\label{ST-1}
(f\hollowstar h)^{R}(t) \leq 1=h^{R}(t).
\end{equation}
\eqref{ST-1} together with (\ref{e-*}) and (\ref{O-4}) implies that}
$$
(f\hollowstar h)^{R}\leq h^{R}.
$$
{\color{blue}By Theorem~\ref{order-theorem} there is}
$$
f\hollowstar h\sqsubseteq  h=g\hollowstar h.
$$

{\color{blue}(D.4)} If $f\neq \bm{1}_{\{1\}}$, $g\neq \bm{1}_{\{1\}}$,
and $h\neq \bm{1}_{\{1\}}$, then
from the definition of $\hollowstar$, it follows that
$$
(f\hollowstar h)(t)=
\begin{cases}
f^{L}(t)\vee h^{L}(t), & t\in [0, \eta_{f, h}),\\
1, & t\in [\eta_{f, h}, \eta_{f, h}),\\
f^{R}(\eta_{f, h})\wedge h^{R}(\eta_{f, h}), & t=\eta_{f, h}, \\
0, & t\in (\eta_{f, h}, 1],
\end{cases}
$$
and
 $$
(g\hollowstar h)(t)=
\begin{cases}
g^{L}(t)\vee h^{L}(t), & t\in [0, \eta_{g, h}),\\
1, & t\in [\eta_{g, h}, \eta_{g, h}),\\
g^{R}(\eta_{g, h})\wedge h^{R}(\eta_{g, h}), & t=\eta_{g, h}, \\
0, & t\in (\eta_{g, h}, 1].
\end{cases}
$$

From $f\sqsubseteq g$ and Theorem \ref{order-theorem}, it follows that $f^{L}\geq g^{L}$
and $f^{R}\leq g^{R}$. Therefore,
$$
\{x\in I\mid g^{L}(x)=1\}\subseteq \{x\in I\mid f^{L}(x)=1\},
$$
and
$$
\{x\in I\mid f^{R}(x)=1\}\subseteq \{x\in I\mid g^{R}(x)=1\},
$$
implying that
$$
\inf\{x\in I\mid g^{L}(x)=1\}\geq \inf\{x\in I\mid f^{L}(x)=1\},
$$
and
$$
\sup\{x\in I\mid f^{R}(x)=1\}\leq \sup\{x\in I\mid g^{R}(x)=1\}.
$$
Thus,
$$
\eta_{f, h}\leq \eta_{g, h} \text{ and } \eta_{f, h}\leq \eta_{g, h}.
$$

{\color{blue}Further, by Proposition~\ref{LR*-Thm} there is}
\begin{align*}
(f\hollowstar h)^{L}(t)&=
\begin{cases}
f^{L}(t)\vee h^{L}(t), & t\in [0, \eta_{f, h}),\\
1, & t\in [\eta_{f, h}, 1],
\end{cases}\\
(f\hollowstar h)^{R}(t)&=
\begin{cases}
1, & t\in [0, \eta_{f, h}),\\
f^{R}(\eta_{f, h})\wedge h^{R}(\eta_{f, h}), & t=\eta_{f, h}, \\
0, & t\in (\eta_{f, h}, 1],
\end{cases}
\end{align*}
and
\begin{align*}
(g\hollowstar h)^{L}(t)&=
\begin{cases}
g^{L}(t)\vee h^{L}(t), & t\in [0, \eta_{g, h}),\\
1, & t\in [\eta_{g, h}, 1],
\end{cases}\\
(g\hollowstar h)^{R}(t)&=
\begin{cases}
1, & t\in [0, \eta_{g, h}),\\
g^{R}(\eta_{g, h})\wedge h^{R}(\eta_{g, h}), & t=\eta_{g, h}, \\
0, & t\in (\eta_{g, h}, 1].
\end{cases}
\end{align*}
From $f^{R}\leq g^{R}$, it follows that
\begin{equation}\label{O-5}
(f\hollowstar h)^{R}\leq (g\hollowstar h)^{R}.
\end{equation}
From $f^{L}\geq g^{L}$, it follows that, for $t\in [0, \eta_{f, h})$,
$$
(f\hollowstar h)^{L}(t)\geq (g\hollowstar h)^{L}(t).
$$
It is clear that, for $t\in [\eta_{f, h}, 1]$,
$$
(f\hollowstar h)^{L}(t)=1 \geq (g\hollowstar h)^{L}(t).
$$
Thus,
\begin{equation}\label{QST-2}
(f\hollowstar h)^{L}\geq (g\hollowstar h)^{L}.
\end{equation}
\eqref{QST-2} together with (\ref{O-5}) and Theorem~\ref{order-theorem} implies that
$$
f\hollowstar h\sqsubseteq g\hollowstar h.
$$
\end{proof}

\begin{theorem}
$\hollowstar$ satisfies (O5).
\end{theorem}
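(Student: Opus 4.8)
The plan is to prove (O5) by direct evaluation of the piecewise formula~\eqref{xin-operation}. Throughout, write $f=\bm{1}_{[0,1]}$ and $g=\bm{1}_{[a,b]}$ with $0\le a\le b\le 1$; the task is to show $f\hollowstar g=\bm{1}_{[0,b]}$. The strategy is to first pin down the auxiliary data $f^{L},f^{R},g^{L},g^{R}$ together with $\eta_{f,g}$ and $\xi_{f,g}$, then substitute into Definition~\ref{bingstar-operation} and read off the result.

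First I would record that, because $f\equiv 1$, one has $f^{L}\equiv f^{R}\equiv 1$, so $\inf\{x\in I\mid f^{L}(x)=1\}=0$ and $\sup\{x\in I\mid f^{R}(x)=1\}=1$; in particular $f\ne\bm{1}_{\{1\}}$. For $g$, a direct check shows $g^{L}(x)=1$ exactly when $x\ge a$ and $g^{R}(x)=1$ exactly when $x\le b$, so that $\inf\{x\in I\mid g^{L}(x)=1\}=a$ and $\sup\{x\in I\mid g^{R}(x)=1\}=b$. Taking minima then gives
\[
\eta_{f,g}=0\wedge a=0,\qquad \xi_{f,g}=1\wedge b=b.
\]

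Next I would split according to whether $g=\bm{1}_{\{1\}}$. If $a=b=1$ then $g=\bm{1}_{\{1\}}$, and clause~(2) of Definition~\ref{bingstar-operation} gives immediately $f\hollowstar g=f=\bm{1}_{[0,1]}=\bm{1}_{[0,b]}$. Otherwise $g\ne\bm{1}_{\{1\}}$ and, since also $f\ne\bm{1}_{\{1\}}$, clause~(3) applies with $\eta_{f,g}=0$ and $\xi_{f,g}=b$: the branch on $[0,\eta_{f,g})=\emptyset$ is vacuous, the branch on $[\eta_{f,g},\xi_{f,g})=[0,b)$ returns the constant $1$, and on $(\xi_{f,g},1]=(b,1]$ the value is $0$. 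The only point left is $t=\xi_{f,g}=b$, where the formula yields $f^{R}(b)\wedge g^{R}(b)=1\wedge 1=1$ (using $b\le b$, so $g^{R}(b)=1$). Collecting these values shows $(f\hollowstar g)(t)=1$ for $t\in[0,b]$ and $(f\hollowstar g)(t)=0$ for $t\in(b,1]$, i.e.\ $f\hollowstar g=\bm{1}_{[0,b]}$.

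The computation itself is routine; the part needing genuine care---and hence the main, if modest, obstacle---is the handling of the boundary point $t=b$ and of the degenerate configurations. When $b=0$ one has $a=b=0$, so $\eta_{f,g}=\xi_{f,g}=0$, both interval branches of~\eqref{xin-operation} are empty, and the single surviving value $f^{R}(0)\wedge g^{R}(0)=1$ at $t=0$ correctly reproduces $\bm{1}_{[0,0]}=\bm{1}_{[0,b]}$; the case $a=b=1$ is exactly the $g=\bm{1}_{\{1\}}$ branch already treated. Verifying that in each of these situations the piecewise definition collapses to the claimed characteristic function completes the argument.
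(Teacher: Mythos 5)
Your proof is correct and follows essentially the same route as the paper: compute $f^{L},f^{R},g^{L},g^{R}$, deduce $\eta_{f,g}=0$ and $\xi_{f,g}=b$, and read off $\bm{1}_{[0,b]}$ from the piecewise formula, with the same case split (your $g=\bm{1}_{\{1\}}$ branch is the paper's $a=1$ case, and your $b=0$ check matches its subcase (E.2.1)). Nothing further is needed.
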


\begin{proof}
{\color{blue}Assume $0\leq a\leq b\leq 1$. Then we can distinguish the following cases:}

\medskip

{\color{blue}(E.1)} If $a=1$, then $\bm{1}_{[0, 1]}\hollowstar \bm{1}_{[a, b]}
=\bm{1}_{[0, 1]} \hollowstar \bm{1}_{\{1\}}=\bm{1}_{[0, 1]}$;

\medskip

{\color{blue}(E.2)} If $a<1$, then
  $$
  \bm{1}_{[0, 1]}^{L}\equiv 1,
  $$
  $$
  \bm{1}_{[0, 1]}^{R}\equiv 1,
  $$
  $$
  \bm{1}_{[a, b]}^{L}(x)=\begin{cases}
  0, & x\in [0, a),\\
  1, & x\in [a, 1],
  \end{cases}
  $$
  and
  $$
  \bm{1}_{[a, b]}^{R}(x)=\begin{cases}
  1, & x\in [0, b],\\
  0, & x\in (b, 1],
  \end{cases}
  $$
which implies that
  $\inf\{x\in I\mid \bm{1}_{[0,1]}^{L}(x)=1\}\wedge \inf\{x\in I\mid \bm{1}_{[a, b]}^{L}(x)=1\}=0$ and
  $\sup\{x\in I\mid \bm{1}_{[0, 1]}^{R}(x)=1\}\wedge \sup\{x\in I\mid \bm{1}_{[a, b]}^{R}(x)=1\}=b$.

\medskip

  Consider the following two subcases:

\medskip

{\color{blue}(E.2.1)} If $b=0$, we have
  $$(\bm{1}_{[0, 1]}\hollowstar \bm{1}_{[a, b]})(t)=\begin{cases}
\bm{1}_{[0, 1]}^{L}(t)\vee \bm{1}_{[a, b]}^{L}(t), & t\in [0, 0),\\
1, & t=0,\\
0, & t\in (0, 1],
\end{cases}
$$
implying that $\bm{1}_{[0, 1]}\hollowstar \bm{1}_{[a, b]}=\bm{1}_{[0, 0]}=\bm{1}_{[0, b]}$
as $ [0, 0)=\emptyset$.

\medskip

{\color{blue}(E.2.2)} If $b>0$, we have
\begin{equation}\label{E.2.2}
(\bm{1}_{[0, 1]}\hollowstar \bm{1}_{[a, b]})(t)
=\begin{cases}
\bm{1}_{[0, 1]}^{L}(t)\vee \bm{1}_{[a, b]}^{L}(t), & t\in [0, 0),\\
1, & t\in [0, b),\\
\bm{1}_{[0, 1]}^{R}(b)\wedge \bm{1}_{[a, b]}^{R}(b), & t=b,\\
0, & t\in (b, 1].
\end{cases}
\end{equation}
\eqref{E.2.2} together with $\bm{1}_{[0, 1]}^{R}(b)\wedge \bm{1}_{[a, b]}^{R}(b)=1$
implies that $\bm{1}_{[0, 1]}\hollowstar \bm{1}_{[a, b]}=\bm{1}_{[0, b]}$.
\end{proof}

\begin{theorem}\label{Theorem-min}
{\color{blue}For $x_1, x_2\in I$, $\bm{1}_{\{x_1\}}\hollowstar \bm{1}_{\{x_2\}}=\bm{1}_{\{x_1\wedge x_2\}}$.
In particular, $\hollowstar$ satisfies (O6).}
\end{theorem}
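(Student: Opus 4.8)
The plan is to split on whether either argument equals $\bm{1}_{\{1\}}$ and then reduce everything to a direct evaluation of the piecewise formula \eqref{xin-operation}. First I would dispose of the degenerate cases $x_1=1$ and $x_2=1$: if $x_1=1$ then $\bm{1}_{\{x_1\}}=\bm{1}_{\{1\}}$, so clause (1) of Definition~\ref{bingstar-operation} gives $\bm{1}_{\{1\}}\hollowstar \bm{1}_{\{x_2\}}=\bm{1}_{\{x_2\}}=\bm{1}_{\{1\wedge x_2\}}$, and symmetrically for $x_2=1$ via clause (2). Both outputs agree with $\bm{1}_{\{x_1\wedge x_2\}}$.

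For the main case $x_1,x_2\in[0,1)$ the key computation is the left and right closures of a singleton characteristic function. I would show directly from the definitions that, for $f=\bm{1}_{\{x_1\}}$, one has $f^{L}=\bm{1}_{[x_1,1]}$ and $f^{R}=\bm{1}_{[0,x_1]}$; consequently $\inf\{x\in I\mid f^{L}(x)=1\}=x_1$ and $\sup\{x\in I\mid f^{R}(x)=1\}=x_1$, and likewise both quantities equal $x_2$ for $g=\bm{1}_{\{x_2\}}$. Feeding these into the definitions of $\eta_{f,g}$ and $\xi_{f,g}$ immediately yields $\eta_{f,g}=\xi_{f,g}=x_1\wedge x_2$; write $m:=x_1\wedge x_2$.

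Setting $\eta_{f,g}=\xi_{f,g}=m$ in \eqref{xin-operation}, the middle band $[\eta_{f,g},\xi_{f,g})=[m,m)$ is empty and drops out, so only three cases survive. On $[0,m)$ we have $t<m\le x_1$ and $t<m\le x_2$, whence $f^{L}(t)=g^{L}(t)=0$; at the single point $t=m$ the value is $f^{R}(m)\wedge g^{R}(m)$, and since $m\le x_1$ and $m\le x_2$ both factors equal $1$; and on $(m,1]$ the value is $0$. Hence $(f\hollowstar g)(t)=1$ exactly when $t=m$, i.e. $\bm{1}_{\{x_1\}}\hollowstar \bm{1}_{\{x_2\}}=\bm{1}_{\{m\}}=\bm{1}_{\{x_1\wedge x_2\}}$.

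The second assertion (O6) is then immediate: since $x_1\wedge x_2\in I$, the output $\bm{1}_{\{x_1\wedge x_2\}}$ lies in $\mathbf{J}$, so $\hollowstar$ is closed on $\mathbf{J}$. There is no serious obstacle here; the only points needing a moment's care are the collapse of the interval $[\eta_{f,g},\xi_{f,g})$ to $\emptyset$ when $\eta_{f,g}=\xi_{f,g}$, and the verification that the lone surviving value $f^{R}(m)\wedge g^{R}(m)$ equals $1$ rather than something smaller — both of which follow from $m=x_1\wedge x_2$ lying at or to the left of each singleton's support.
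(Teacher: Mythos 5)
Your proof is correct and follows essentially the same route as the paper: dispose of the $\bm{1}_{\{1\}}$ cases via the neutral-element clauses, compute $\bm{1}_{\{x\}}^{L}=\bm{1}_{[x,1]}$ and $\bm{1}_{\{x\}}^{R}=\bm{1}_{[0,x]}$ to get $\eta_{f,g}=\xi_{f,g}=x_1\wedge x_2$, and read off the result from \eqref{xin-operation}. The only (harmless, arguably cleaner) difference is that you evaluate the surviving point value $f^{R}(m)\wedge g^{R}(m)=1$ directly, whereas the paper deduces it indirectly from Remark~\ref{R-24}(ii) after checking $(\bm{1}_{\{x_1\}}\hollowstar\bm{1}_{\{x_2\}})^{L_{\mathrm{w}}}(x_1)=0$, and you avoid the paper's ``assume $x_1\leq x_2$ then invoke commutativity'' step by working with $m=x_1\wedge x_2$ symmetrically.
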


\begin{proof}
{\color{blue}Clearly, $\bm{1}_{\{x_1\}}\hollowstar \bm{1}_{\{x_2\}}=\bm{1}_{\{x_1\}}$ when $x_2=1$ by
Theorem~\ref{hollowstar-O3}.}

Moreover, for $x\in I$,
$$
\bm{1}_{\{x\}}^{L}(t)=
\begin{cases}
0, & t\in [0, x), \\
1, & t\in [x, 1],
\end{cases}
$$
and
$$
\bm{1}_{\{x\}}^{R}(t)=
\begin{cases}
1, & t\in [0, x], \\
0, & t\in (x, 1].
\end{cases}
$$
Then, for $0\leq x_1\leq  x_2< 1$, $\inf\{x\in I\mid \bm{1}_{\{x_1\}}^{L}(x)=1\}\wedge
\inf\{x\in I\mid \bm{1}_{\{x_2\}}^{L}(x)=1\}=x_1$ and
$\sup\{x\in I\mid \bm{1}_{\{x_1\}}^{R}(x)=1\}\wedge \sup\{x\in I\mid \bm{1}_{\{x_2\}}^{R}(x)=1\}=x_1$. Clearly,
$(\bm{1}_{\{x_1\}}\hollowstar \bm{1}_{\{x_2\}})^{L_{\mathrm{w}}}(x_1)=0$. From Remark~\ref{R-24} (ii), it follows
that
$$
(\bm{1}_{\{x_1\}}\hollowstar \bm{1}_{\{x_2\}})(t)=
\begin{cases}
0, & t\in [0, x_1), \\
1, & t= x_1, \\
0, & t\in (x_1, 1],
\end{cases}
$$
which, together with the commutativity of $\hollowstar$, implies that
$$
\bm{1}_{\{x_1\}}\hollowstar \bm{1}_{\{x_2\}}=\bm{1}_{\{x_2\}}\hollowstar
\bm{1}_{\{x_1\}}=\bm{1}_{\{x_1\}}\in \mathbf{J}.
$$
\end{proof}

\begin{theorem}\label{hollowstar-O7}
{\color{blue}For $[a_1, b_1]$, $[a_2, b_2]\subset I$, $\bm{1}_{[a_1, b_1]}\hollowstar \bm{1}_{[a_2, b_2]}
=\bm{1}_{[a_1\wedge a_2, b_1\wedge b_2]}$.
In particular, $\hollowstar$ satisfies (O7).}
\end{theorem}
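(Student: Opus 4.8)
The plan is to reduce the computation of $\bm{1}_{[a_1, b_1]}\hollowstar \bm{1}_{[a_2, b_2]}$ to a direct substitution into the defining formula (\ref{xin-operation}), after which (O7) drops out at once because the output is again the characteristic function of a closed subinterval of $I$. First I would clear the degenerate cases in which a factor equals $\bm{1}_{\{1\}}=\bm{1}_{[1,1]}$, that is, some $[a_i,b_i]=\{1\}$. If $[a_1,b_1]=\{1\}$, then Theorem~\ref{hollowstar-O3} gives $\bm{1}_{[a_1,b_1]}\hollowstar \bm{1}_{[a_2,b_2]}=\bm{1}_{[a_2,b_2]}$, which coincides with $\bm{1}_{[1\wedge a_2,\,1\wedge b_2]}=\bm{1}_{[a_2,b_2]}$ since $a_2\le 1$ and $b_2\le 1$; the case $[a_2,b_2]=\{1\}$ is symmetric by Theorem~\ref{hollowstar-O1}.

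For the main case $\bm{1}_{[a_1,b_1]}\neq \bm{1}_{\{1\}}\neq \bm{1}_{[a_2,b_2]}$, I would reuse the expressions for $\bm{1}_{[a,b]}^{L}$ and $\bm{1}_{[a,b]}^{R}$ already computed in the proof of (O5): $\bm{1}_{[a,b]}^{L}$ equals $0$ on $[0,a)$ and $1$ on $[a,1]$, while $\bm{1}_{[a,b]}^{R}$ equals $1$ on $[0,b]$ and $0$ on $(b,1]$. From these I read off $\eta_{f,g}=\inf\{x\mid\bm{1}_{[a_1,b_1]}^{L}(x)=1\}\wedge\inf\{x\mid\bm{1}_{[a_2,b_2]}^{L}(x)=1\}=a_1\wedge a_2$ and, likewise, $\xi_{f,g}=b_1\wedge b_2$, with $a_1\wedge a_2\le b_1\wedge b_2$ seen directly from $a_i\le b_i$. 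Substituting into (\ref{xin-operation}): on $[0,a_1\wedge a_2)$ both $L$-values vanish, so the result is $0$; on $[a_1\wedge a_2,\,b_1\wedge b_2)$ it is $1$; and on $(b_1\wedge b_2,1]$ it is $0$.

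The only genuinely delicate point is the boundary value at $t=\xi_{f,g}=b_1\wedge b_2$, which decides whether the resulting interval is closed on the right. Here I would note that $\xi_{f,g}\le b_1$ and $\xi_{f,g}\le b_2$ force $\bm{1}_{[a_1,b_1]}^{R}(\xi_{f,g})=\bm{1}_{[a_2,b_2]}^{R}(\xi_{f,g})=1$, so $f^{R}(\xi_{f,g})\wedge g^{R}(\xi_{f,g})=1$; this correctly attaches the right endpoint and simultaneously handles the degenerate subcase $\eta_{f,g}=\xi_{f,g}$ (where the slab $[\eta_{f,g},\xi_{f,g})$ is empty and one invokes Remark~\ref{R-24}(ii)). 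Assembling the four pieces gives precisely $\bm{1}_{[a_1\wedge a_2,\,b_1\wedge b_2]}$.

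Finally, since $a_1\wedge a_2\le b_1\wedge b_2$, the identity just proved shows $\bm{1}_{[a_1,b_1]}\hollowstar \bm{1}_{[a_2,b_2]}\in \mathbf{K}$, so $\hollowstar$ is closed on $\mathbf{K}$, which is exactly (O7). I expect the endpoint bookkeeping at $t=\xi_{f,g}$ to be the main (and really the only) obstacle, and it is resolved entirely by the observation $f^{R}(\xi_{f,g})\wedge g^{R}(\xi_{f,g})=1$.
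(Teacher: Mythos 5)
Your proposal is correct and follows essentially the same route as the paper's proof: dispose of the $\bm{1}_{\{1\}}$ cases via (O3), compute $\bm{1}_{[a,b]}^{L}$ and $\bm{1}_{[a,b]}^{R}$ to identify $\eta_{f,g}=a_1\wedge a_2$ and $\xi_{f,g}=b_1\wedge b_2$, and substitute into (\ref{xin-operation}), with the right endpoint closed because $\bm{1}_{[a_1,b_1]}^{R}(b_1\wedge b_2)\wedge \bm{1}_{[a_2,b_2]}^{R}(b_1\wedge b_2)=1$. The only cosmetic difference is your appeal to Remark~\ref{R-24}(ii) for the subcase $\eta_{f,g}=\xi_{f,g}$, which is not needed since (\ref{xin-operation}) already assigns the value $f^{R}(\xi_{f,g})\wedge g^{R}(\xi_{f,g})$ at $t=\xi_{f,g}$ in all cases.
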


\begin{proof}
{\color{blue}Clearly, $\bm{1}_{[a_1, b_1]}\hollowstar \bm{1}_{[a_2, b_2]}
=\bm{1}_{[a_2, b_2]}\hollowstar \bm{1}_{[a_1, b_1]}\in \mathbf{K}$
when $[a_1, b_1]=\{1\}$ or $[a_2, b_2]=\{1\}$ by Theorem~\ref{hollowstar-O3}.

Moreover, for $0\leq a\leq b\leq 1$,}
$$
  \bm{1}_{[a, b]}^{L}(t)=
  \begin{cases}
  0, & t\in [0, a),\\
  1, & t\in [a, 1],
  \end{cases}
  $$
  and
  $$
  \bm{1}_{[a, b]}^{R}(t)=
  \begin{cases}
  1, & t\in [0, b],\\
  0, & t\in (b, 1].
  \end{cases}
  $$
Then, for $[a_1, b_1]$, $[a_2, b_2]\subset I$ with $[a_1, b_1]\neq \{1\}$ and $[a_2, b_2]\neq \{1\}$,
we have $\inf\{x\in I\mid \bm{1}_{[a_1, b_1]}^{L}(x)=1\}\wedge \inf\{x\in I\mid \bm{1}_{[a_2, b_2]}^{L}(x)=1\}
=a_1\wedge a_2$ and $\sup\{x\in I\mid \bm{1}_{[a_1, b_1]}^{R}(x)=1\}\wedge
\sup\{x\in I\mid \bm{1}_{[a_2, b_2]}^{R}(x)=1\}=b_1\wedge b_2$.
From (\ref{xin-operation}), it follows that
\begin{align*}
&\quad(\bm{1}_{[a_1, b_1]}\hollowstar \bm{1}_{[a_2, b_2]})(t)\\
&=
\begin{cases}
\bm{1}_{[a_1, b_1]}^{L}(t)\vee \bm{1}_{[a_2, b_2]}^{L}(t), & t\in [0, a_1\wedge a_2), \\
1, & t\in [a_1\wedge a_2, b_1\wedge b_2), \\
\bm{1}_{[a_1, b_1]}^{R}(t)\wedge \bm{1}_{[a_2, b_2]}^{R}(t),
& t=b_1\wedge b_2,\\
0, & t\in (b_1\wedge b_2, 1],
\end{cases}\\
&=\begin{cases}
0, & t\in [0, a_1\wedge a_2), \\
1, & t\in [a_1\wedge a_2, b_1\wedge b_2], \\
0, & t\in (b_1\wedge b_2, 1],
\end{cases}
\end{align*}
which, together with the commutativity of $\hollowstar$, implies that
$$
\bm{1}_{[a_1, b_1]}\hollowstar \bm{1}_{[a_2, b_2]}=\bm{1}_{[a_2, b_2]}\hollowstar \bm{1}_{[a_1, b_1]}=
\bm{1}_{[a_1\wedge a_2, b_1\wedge b_2]}\in \mathbf{K}.
$$
\end{proof}


{\color{blue}Theorems~\ref{hollowstar-O1}--\ref{hollowstar-O7} imply the following result.}

\begin{theorem}\label{tr-norm-theorem}
The binary operation $\hollowstar$ is a $t_{r}$-norm on $\mathbf{L}$.
\end{theorem}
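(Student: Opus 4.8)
The plan is to verify directly that $\hollowstar$ meets the definition of a $t_r$-norm as laid out in Definition~\ref{Def-1}, that is, closure on $\mathbf{L}$ together with the seven axioms (O1)--(O7). Since each of these requirements has already been dispatched in the preceding sequence of results, the proof of this theorem is essentially one of assembly: I would simply collect the relevant statements and invoke Definition~\ref{Def-1}.

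Concretely, I would first recall that closure, $\hollowstar(\mathbf{L}^2)\subseteq\mathbf{L}$, is the content of Proposition~\ref{Closed-Lemma} (equivalently Remark~\ref{R-24}(i)); this is what makes $\hollowstar$ a genuine binary operation on $\mathbf{L}$ and so must be noted before the axioms are meaningful. Then I would run through the axioms in order, citing the corresponding result for each: (O1) commutativity is Theorem~\ref{hollowstar-O1}; (O2) associativity is Theorem~\ref{S-4B}; (O3) the neutral element $\bm{1}_{\{1\}}$ is Theorem~\ref{hollowstar-O3}; (O4) $\sqsubseteq$-monotonicity is the theorem immediately following Theorem~\ref{hollowstar-O3}; (O5) the identity $\bm{1}_{[0,1]}\hollowstar\bm{1}_{[a,b]}=\bm{1}_{[0,b]}$ is the next theorem; (O6) closure on $\mathbf{J}$ follows from Theorem~\ref{Theorem-min}, since $\bm{1}_{\{x_1\}}\hollowstar\bm{1}_{\{x_2\}}=\bm{1}_{\{x_1\wedge x_2\}}\in\mathbf{J}$; and (O7) closure on $\mathbf{K}$ follows from Theorem~\ref{hollowstar-O7}, since $\bm{1}_{[a_1,b_1]}\hollowstar\bm{1}_{[a_2,b_2]}=\bm{1}_{[a_1\wedge a_2,\,b_1\wedge b_2]}\in\mathbf{K}$. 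With closure and (O1)--(O7) all in hand, the conclusion is immediate from Definition~\ref{Def-1}.

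Because the theorem is a synthesis, there is no genuine obstacle at this final step itself; the real work lies upstream, and the one step I would flag as substantive rather than bookkeeping is associativity (O2, Theorem~\ref{S-4B}). There the delicate point is that $(f\hollowstar g)\hollowstar h$ and $f\hollowstar(g\hollowstar h)$ must be reconciled at the single shared peak value $\xi$. The machinery I would rely on for that is Lemma~\ref{no=1} (guaranteeing the intermediate results $f\hollowstar g$ and $g\hollowstar h$ never collapse to $\bm{1}_{\{1\}}$, so that formula (\ref{xin-operation}) continues to apply), the closed forms for $(f\hollowstar g)^{L}$ and $(f\hollowstar g)^{R}$ supplied by Proposition~\ref{LR*-Thm}, and the reductions $\eta_{f\hollowstar g,h}=\eta_{f,g\hollowstar h}=\eta_{f,g}\wedge\eta_{g,h}$ and $\xi_{f\hollowstar g,h}=\xi_{f,g\hollowstar h}=\xi_{f,g}\wedge\xi_{g,h}$, after which the value at $\xi$ is settled by the trichotomy $\xi_{f,g}=\xi_{g,h}$, $\xi_{f,g}<\xi_{g,h}$, $\xi_{f,g}>\xi_{g,h}$. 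All of this, however, is already secured by Theorem~\ref{S-4B}, so for the present theorem I would merely cite it and conclude.
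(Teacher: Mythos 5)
Your proposal matches the paper exactly: the paper proves this theorem by simply citing Theorems~\ref{hollowstar-O1}--\ref{hollowstar-O7} (with closure already secured by Proposition~\ref{Closed-Lemma}), which is precisely the assembly you describe. Correct, and the same approach.
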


\section{$\hollowstar$ cannot be obtained by $\curlywedge$ and $\curlyvee$}

This section shows that the $t_{r}$-norm $\hollowstar$ constructed in Section~\ref{S-V}
cannot be obtained by operations $\curlywedge$ or $\curlyvee$.

The following theorem provides a
sufficient condition ensuring that $\ast$ is a $t$-norm on $I$.

\begin{theorem}\label{WC-TFS}{\rm \cite[Theorem~21]{WC-TFS}}
Let $\vartriangle$ be a continuous $t$-norm on $I$ and $\ast$ be a binary operation on $I$.
If $\curlywedge$ is a $t$-norm on $\mathbf{L}$, then $\ast$ is a $t$-norm.
\end{theorem}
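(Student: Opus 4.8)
The plan is to verify, one axiom at a time, the four defining properties (T1)--(T4) of a $t$-norm for $\ast$, exploiting the fact that evaluating the convolution at the point $x=1$ collapses the supremum in~\eqref{O-1} to a single term. Indeed, since $\vartriangle$ is a $t$-norm, $y\vartriangle z=1$ forces $y=z=1$, so Lemma~\ref{Con-1} gives $(f\curlywedge g)(1)=f(1)\ast g(1)$ for all $f,g\in\mathbf{L}$. The key device is the family of decreasing affine test functions $f_{u}(x)=(u-1)x+1$ for $u\in I$: each $f_{u}$ is normal and convex, so $f_{u}\in\mathbf{L}$, with $f_{u}(1)=u$, $f_{u}^{L}\equiv 1$, and $f_{u}^{R}=f_{u}$. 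As $u$ ranges over $I$, the value $f_{u}(1)$ realizes every point of $I$, so probing the convolution at $1$ recovers $\ast$ at an arbitrary argument.

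Commutativity (T1) is immediate: $\curlywedge$ is commutative by (O1), so Proposition~\ref{Commu-Asso-Thm} yields $u\ast v=v\ast u$. For the neutral element (T4), apply the neutral-element axiom (O3), $f\curlywedge\bm{1}_{\{1\}}=f$, and evaluate at $1$; by Lemma~\ref{Con-1} this reads $f(1)\ast 1=f(1)$, and taking $f=f_{u}$ gives $u\ast 1=u$ for every $u\in I$, whence $1\ast u=u$ by (T1). For associativity (T2), use the associativity axiom (O2), $(f\curlywedge g)\curlywedge h=f\curlywedge(g\curlywedge h)$, and evaluate both sides at $1$. Since $f\curlywedge g,\,g\curlywedge h\in\mathbf{L}$, applying Lemma~\ref{Con-1} twice on each side produces $(f(1)\ast g(1))\ast h(1)=f(1)\ast(g(1)\ast h(1))$; choosing $f=f_{u}$, $g=f_{v}$, $h=f_{w}$ establishes associativity of $\ast$ at the arbitrary triple $(u,v,w)$.

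The most delicate axiom is monotonicity (T3), where the order $\sqsubseteq$ on $\mathbf{L}$ must be translated into a pointwise inequality for $\ast$. Fix $v\in I$ and $u_{1}\le u_{2}$. Then $f_{u_{1}}\le f_{u_{2}}$ pointwise, and since both functions are decreasing one has $f_{u_{1}}^{L}=f_{u_{2}}^{L}\equiv 1$ and $f_{u_{1}}^{R}=f_{u_{1}}\le f_{u_{2}}=f_{u_{2}}^{R}$, so Theorem~\ref{order-theorem} gives $f_{u_{1}}\sqsubseteq f_{u_{2}}$. The monotonicity axiom (O4) then yields $f_{u_{1}}\curlywedge f_{v}\sqsubseteq f_{u_{2}}\curlywedge f_{v}$, and reading the $R$-part of Theorem~\ref{order-theorem} at $x=1$ -- where $(\cdot)^{R}(1)$ is just evaluation at $1$ -- gives $u_{1}\ast v=(f_{u_{1}}\curlywedge f_{v})(1)\le(f_{u_{2}}\curlywedge f_{v})(1)=u_{2}\ast v$, which commutativity upgrades to monotonicity in both arguments. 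I expect the main obstacle to be bookkeeping: one must confirm that each test function lies in $\mathbf{L}$, that its $f^{L},f^{R}$ are exactly as claimed, and, above all, that the orientation of the probe (value at $x=1$, where the constraint set $\{(y,z):y\vartriangle z=1\}$ is the single point $(1,1)$) matches the direction of $\sqsubseteq$ supplied by Theorem~\ref{order-theorem}. Note that continuity of $\vartriangle$ is not consumed by any of these four derivations directly; its role is to guarantee closure of the convolution on the normal convex functions, i.e. to make the standing hypothesis ``$\curlywedge$ is a $t$-norm on $\mathbf{L}$'' meaningful. Once (T1)--(T4) are verified, $\ast$ is a $t$-norm by definition.
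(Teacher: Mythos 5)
The paper does not actually prove Theorem~\ref{WC-TFS}; it imports it without proof as Theorem~21 of \cite{WC-TFS}, so there is no in-paper argument to compare yours against. Assessed on its own terms, your proof is correct and complete given the auxiliary results the paper does supply. The collapse of the supremum at $x=1$ is exactly Lemma~\ref{Con-1} (resting on Proposition~\ref{1-Lemma}); your test functions $f_{u}(x)=(u-1)x+1$ are the same ones the paper uses to prove Proposition~\ref{Commu-Asso-Thm}, and your computations $f_{u}^{L}\equiv 1$ and $f_{u}^{R}=f_{u}$ are right, so $u_{1}\le u_{2}$ does give $f_{u_{1}}\sqsubseteq f_{u_{2}}$ by Theorem~\ref{order-theorem}. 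Since $F^{R}(1)=\sup\{F(y)\mid y\ge 1\}=F(1)$ for any $F$, reading $(f_{u_{1}}\curlywedge f_{v})^{R}\le (f_{u_{2}}\curlywedge f_{v})^{R}$ at $1$ legitimately yields $u_{1}\ast v\le u_{2}\ast v$; here you correctly rely on the fact that ``$\curlywedge$ is a $t$-norm on $\mathbf{L}$'' already packages closedness of $\curlywedge$ on $\mathbf{L}$, so Theorem~\ref{order-theorem} applies to the outputs. The associativity and neutral-element probes at $x=1$ are likewise sound, since $f\curlywedge g\in\mathbf{L}$ lets you apply Lemma~\ref{Con-1} twice. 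Your closing observation is also accurate: continuity of $\vartriangle$ is never consumed, so your argument in fact establishes the conclusion for an arbitrary $t$-norm $\vartriangle$; the continuity hypothesis is simply inherited from the way Theorem~21 is stated in \cite{WC-TFS}, not needed for this implication.
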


\begin{theorem}\label{Thm-30}
For any binary operation $\ast$ on $I$ and any $t$-norm $\vartriangle$ on $I$,
there exist $f, g\in \mathbf{L}$ such that $f\hollowstar g \neq f\curlywedge g$,
i.e., $\hollowstar$ cannot be obtained by $\curlywedge$.
\end{theorem}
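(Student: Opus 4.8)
The plan is to argue by contradiction. Fix an arbitrary binary operation $\ast$ and an arbitrary $t$-norm $\vartriangle$ on $I$, and suppose toward a contradiction that $f\hollowstar g = f\curlywedge g$ for all $f,g\in\mathbf{L}$. Since $\hollowstar$ is a $t_{r}$-norm on $\mathbf{L}$ by Theorem~\ref{tr-norm-theorem}, the operation $\curlywedge$ would then also be a $t_{r}$-norm, and in particular a $t$-norm on $\mathbf{L}$ in the sense of the basic axioms. My first step is to pin down $\vartriangle$: applying Proposition~\ref{product} to $\curlywedge$ gives $\bm{1}_{\{x_1\}}\curlywedge\bm{1}_{\{x_2\}}=\bm{1}_{\{x_1\vartriangle x_2\}}$, while Theorem~\ref{Theorem-min} gives $\bm{1}_{\{x_1\}}\hollowstar\bm{1}_{\{x_2\}}=\bm{1}_{\{x_1\wedge x_2\}}$. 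Under the standing assumption these must coincide for all $x_1,x_2\in I$, which forces $x_1\vartriangle x_2=x_1\wedge x_2$; that is, $\vartriangle$ must be the minimum $t$-norm $\wedge$.

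The second step exploits that $\wedge$ is continuous: by Theorem~\ref{WC-TFS}, since $\curlywedge$ is a $t$-norm on $\mathbf{L}$ and $\vartriangle=\wedge$ is continuous, the operation $\ast$ must itself be a $t$-norm; in particular $1$ is its neutral element, so $a\ast 1=a$ for every $a\in I$. With $\vartriangle=\wedge$ the convolution reads $(f\curlywedge g)(x)=\sup\{f(y)\ast g(z)\mid y\wedge z=x\}$. I would then exhibit an explicit pair violating $f\hollowstar g=f\curlywedge g$. The natural choice is $f(x)=1-x$ and $g(x)=x$, both of which lie in $\mathbf{L}$ (each is monotone, hence convex, and attains the value $1$, hence normal). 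A direct computation of $f^{L},f^{R},g^{L},g^{R}$ gives $\eta_{f,g}=\xi_{f,g}=0$, so Definition~\ref{bingstar-operation} (case (3), as $f,g\neq\bm{1}_{\{1\}}$) yields $f\hollowstar g=\bm{1}_{\{0\}}$; in particular $(f\hollowstar g)(\tfrac12)=0$. On the other hand, choosing $y=\tfrac12$ and $z=1$ in the defining supremum, and noting $y\wedge z=\tfrac12$, gives $(f\curlywedge g)(\tfrac12)\ge f(\tfrac12)\ast g(1)=\tfrac12\ast 1=\tfrac12>0$, contradicting the assumed equality at the point $x=\tfrac12$.

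The routine reductions (identifying $\vartriangle$ and invoking Theorem~\ref{WC-TFS}) are not where the difficulty lies; the crux is the final comparison, and specifically arranging it to hold uniformly in the unknown operation $\ast$. The delicate point is that $\hollowstar$ forces the value $0$ everywhere to the right of $\xi_{f,g}$, whereas no such collapse can occur for $\curlywedge$: by padding the second coordinate with the point $z=1$ where $g$ attains its maximum, the neutral element of $\ast$ lets me extract the bare value $f(\tfrac12)$ from the supremum, independently of the shape of $\ast$. I expect the main obstacle to be the $\hollowstar$ side rather than the $\curlywedge$ side, namely checking $\eta_{f,g}=\xi_{f,g}=0$ from the definitions of $f^{L},f^{R},g^{L},g^{R}$ and reading off $f\hollowstar g=\bm{1}_{\{0\}}$, together with confirming that the chosen $f,g$ genuinely belong to $\mathbf{L}$ so that both operations are defined on them.
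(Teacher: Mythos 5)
Your proof is correct and follows essentially the same route as the paper: argue by contradiction, force $\vartriangle=\wedge$ by comparing Proposition~\ref{product} with Theorem~\ref{Theorem-min}, invoke Theorem~\ref{WC-TFS} to conclude that $\ast$ is a $t$-norm, and then exhibit a pair on which the two operations disagree by feeding the neutral element of $\ast$ into the defining supremum. The only difference is the witness pair --- you take $f(x)=1-x$, $g(x)=x$ (giving $\eta_{f,g}=\xi_{f,g}=0$, hence $f\hollowstar g=\bm{1}_{\{0\}}$, while $(f\curlywedge g)(\tfrac12)\ge f(\tfrac12)\ast g(1)=\tfrac12$), whereas the paper uses a step function $\psi$ together with $\bm{1}_{\{0.8\}}$ --- and both choices check out.
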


\begin{proof}
Suppose, on the contrary, that there exist a binary operation $\ast$
on $I$ and a $t$-norm $\vartriangle$ on $I$
such that, for any $f, g\in \mathbf{L}$, one has $f\hollowstar g=f\curlywedge g$.

\medskip

{\color{blue}First we will show that
$\vartriangle =\wedge$.}

\medskip

For $x_1, x_2\in I$, {\color{blue}Theorem~\ref{Theorem-min} gives}
$$
\bm{1}_{\{x_1\}}\hollowstar \bm{1}_{\{x_2\}}=\bm{1}_{\{x_1\wedge x_2\}}.
$$
{\color{blue}Further, Theorem~\ref{tr-norm-theorem} and Proposition~\ref{product} yield}
$$
\bm{1}_{\{x_1\}}\hollowstar \bm{1}_{\{x_2\}}=\bm{1}_{\{x_1\}}\curlywedge
\bm{1}_{\{x_2\}}=\bm{1}_{\{x_1\vartriangle x_2\}}.
$$
Thus,
{\color{blue}$$
x_1 \wedge x_2= x_1\vartriangle x_2 \text{ for all } x_1,x_2\in I, \text{ i.e., }
\vartriangle =\wedge.
$$
}

Clearly, $\vartriangle=\wedge$ is a continuous $t$-norm on $I$.
From Theorem~\ref{tr-norm-theorem} and Theorem~\ref{WC-TFS}, it follows
that $\ast$ is a $t$-norm on $I$ and
\begin{equation}\label{O-7}
(f\hollowstar g)(x)=\sup\{f(y)\ast g(z)\mid y\wedge z=x\}.
\end{equation}
Choose a function $\psi\in \mathbf{L}$ by
$$
\psi(x)=
\begin{cases}
1, & x\in [0, 0.75],\\
0.5, & x\in (0.75, 1].
\end{cases}
$$
\begin{figure}[H]
\begin{center}
\scalebox{0.6}{\includegraphics{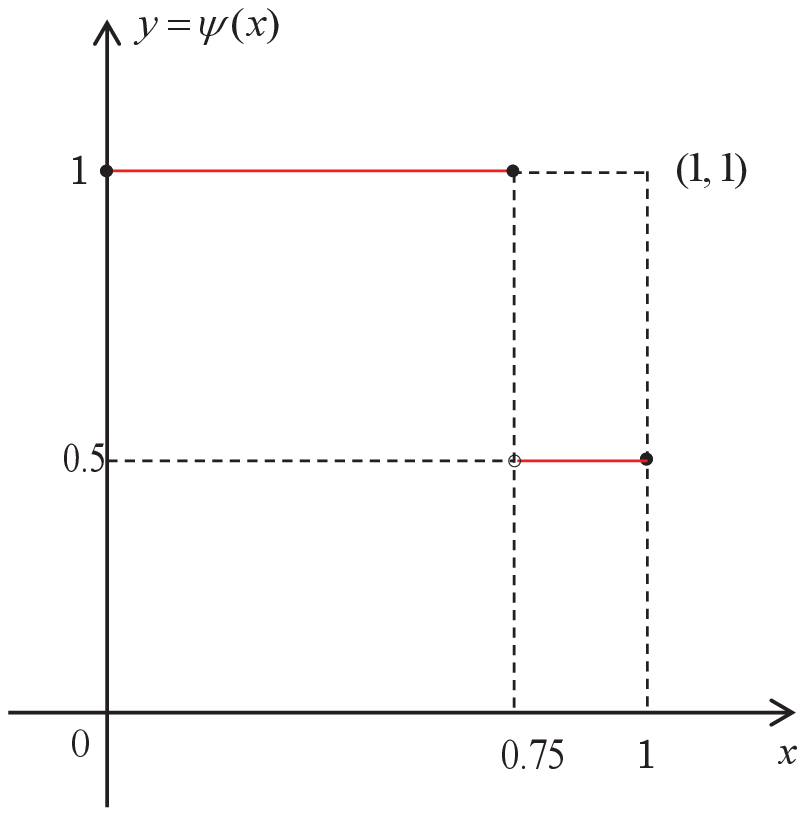}}
\renewcommand{\figure}{Fig.}
\caption{\color{blue}The function $\psi$.
}
\end{center}
\end{figure}
{\color{blue}Then,}
$$
\psi^{L}(x)\equiv 1,
$$
\begin{align*}
\psi^{R}(x)&=
\begin{cases}
1, & x\in [0, 0.75],\\
0.5, & x\in (0.75, 1],
\end{cases}\\
\bm{1}_{\{0.8\}}^{L}(x)&=
\begin{cases}
0, & x\in [0, 0.8), \\
1, & x\in [0.8, 1],
\end{cases}
\end{align*}
and
$$
\bm{1}_{\{0.8\}}^{R}(x)=
\begin{cases}
1, & x\in [0, 0.8], \\
0, & x\in (0.8, 1].
\end{cases}
$$
From~(\ref{xin-operation}), we have
$$
(\psi\hollowstar \bm{1}_{\{0.8\}})(t)=
\begin{cases}
1, & x\in [0, 0.75],\\
0, & x\in (0.75, 1].
\end{cases}
$$
In particular,
\begin{equation}\label{QST-3}
(\psi\hollowstar \bm{1}_{\{0.8\}})(0.8)=0.
\end{equation}
\eqref{QST-3} together with (\ref{O-7}) and the fact that $\ast$ is a $t$-norm implies that
\begin{align*}
0&=(\psi\hollowstar \bm{1}_{\{0.8\}})(0.8)\\
&=\sup\left\{\psi(y)\ast \bm{1}_{\{0.8\}}(z)\mid y\wedge z=0.8\right\}\\
&\geq
\psi(0.8)\ast \bm{1}_{\{0.8\}}(0.8)
=0.5\ast 1=0.5,
\end{align*}
which is a contradiction.
\end{proof}

{\color{blue}\begin{theorem}\label{not-norm}
Let $\ast$ be a binary operation on $I$, $\vartriangle$ be a
$t$-norm on $I$, and $\triangledown$ be a $t$-conorm on $I$.
Then, we have
\begin{itemize}
  \item[{\rm (1)}] the binary operation $\curlyvee$ defined by \eqref{O-2} is
  not a $t$-norm on $\mathbf{L}$;
  \item[{\rm (2)}] the binary operation $\curlywedge$ defined by \eqref{O-1} is
  not a $t$-conorm on $\mathbf{L}$.
\end{itemize}
\end{theorem}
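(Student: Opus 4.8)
The plan is to derive a contradiction from the neutral-element requirement in each case, exploiting the fact that the two convolution formulas collapse to a single product at the extreme point $0$ (for $\curlyvee$) or $1$ (for $\curlywedge$) by Lemma~\ref{Con-1}, while the neutral element of the underlying $t$-conorm (resp.\ $t$-norm) forces the very same product to reappear at interior points. Since a $t$-norm on $\mathbf{L}$ must have $\bm{1}_{\{1\}}$ as neutral element (axiom (O3)) whereas $\curlyvee$ is conorm-like, and a $t$-conorm on $\mathbf{L}$ must have $\bm{1}_{\{0\}}$ as neutral element (axiom (O3')) whereas $\curlywedge$ is norm-like, this mismatch will pin the quantity $1\ast 0$ to two incompatible values.

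For (1), I would argue by contradiction: suppose $\curlyvee$ is a $t$-norm on $\mathbf{L}$, so that $f\curlyvee\bm{1}_{\{1\}}=f$ for every $f\in\mathbf{L}$. I would test this identity on $f=\bm{1}_{\{0\}}\in\mathbf{L}$. Evaluating at $x=0$ and invoking Lemma~\ref{Con-1}(2) gives $(\bm{1}_{\{0\}}\curlyvee\bm{1}_{\{1\}})(0)=\bm{1}_{\{0\}}(0)\ast\bm{1}_{\{1\}}(0)=1\ast 0$, which by (O3) must equal $\bm{1}_{\{0\}}(0)=1$, so $1\ast 0=1$. On the other hand, at any interior point $x\in(0,1)$ the pair $(y,z)=(0,x)$ satisfies $0\triangledown x=x$ (since $0$ is the neutral element of the $t$-conorm $\triangledown$), so the term $\bm{1}_{\{0\}}(0)\ast\bm{1}_{\{1\}}(x)=1\ast 0$ appears in the supremum defining $(\bm{1}_{\{0\}}\curlyvee\bm{1}_{\{1\}})(x)$; hence this value is $\geq 1\ast 0=1$, while (O3) forces it to equal $\bm{1}_{\{0\}}(x)=0$. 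This contradiction proves (1).

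Part (2) is dual, and I would treat it by the same device: assuming $\curlywedge$ is a $t$-conorm on $\mathbf{L}$, so $f\curlywedge\bm{1}_{\{0\}}=f$, I would test on $f=\bm{1}_{\{1\}}\in\mathbf{L}$. Evaluating at $x=1$ and using Lemma~\ref{Con-1}(1) yields $1\ast 0=(\bm{1}_{\{1\}}\curlywedge\bm{1}_{\{0\}})(1)=\bm{1}_{\{1\}}(1)=1$, whereas at any $x\in(0,1)$ the pair $(y,z)=(1,x)$ satisfies $1\vartriangle x=x$ (the neutral element of the $t$-norm $\vartriangle$), forcing $(\bm{1}_{\{1\}}\curlywedge\bm{1}_{\{0\}})(x)\geq 1\ast 0=1$ against the required value $\bm{1}_{\{1\}}(x)=0$.

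The routine verifications that $\bm{1}_{\{0\}},\bm{1}_{\{1\}}\in\mathbf{L}$ (indeed they are the minimum and maximum of $\mathfrak{L}$) are immediate. The only point requiring care---and the real heart of the argument---is to recognize that no single binary operation $\ast$ can reconcile the boundary evaluation, where $1\ast 0$ is pinned to $1$, with the interior evaluation, where the neutral element of the \emph{wrong} kind of $t$-(co)norm forces $1\ast 0=0$; everything else is bookkeeping. Notably, no continuity or further structural hypothesis on $\ast$, $\vartriangle$, or $\triangledown$ is needed, so the conclusion holds for \emph{all} admissible choices.
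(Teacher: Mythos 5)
Your proof is correct and follows essentially the same route as the paper's: both use Lemma~\ref{Con-1} together with the neutral-element axiom to pin $1\ast 0=1$ at the boundary point, and then exhibit an interior point whose defining supremum contains the term $1\ast 0$ (via the neutral element of $\triangledown$ resp.\ $\vartriangle$), contradicting the required value $0$. The only difference is cosmetic: you test on the characteristic functions $\bm{1}_{\{0\}}$ and $\bm{1}_{\{1\}}$, whereas the paper uses the affine functions $f_{\zeta}(x)=(1-\zeta)x+\zeta$ and $g(x)=1-x$.
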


\begin{proof}
(1) Suppose on the contrary that $\curlyvee$ is a $t$-norm on $\mathbf{L}$. Then, we have
$f\curlyvee \bm{1}_{\{1\}}=f$ for any $f\in \mathbf{L}$ as $\bm{1}_{\{1\}}$
is the neutral element of $\curlyvee$. For $0\leq \zeta\leq 1$, take $f_{\zeta}: I\to I$
as $f_{\zeta}(x)=(1-\zeta)x+\zeta$. Clearly, $f_{\zeta}\in \mathbf{L}$, which together with
Lemma~\ref{Con-1} implies that
\begin{equation}\label{QST-4}
\zeta=f_{\zeta}(0)=(f_{\zeta}\curlyvee \bm{1}_{\{1\}})(0)=f_{\zeta}(0)\ast \bm{1}_{\{1\}}(0)
=\zeta \ast 0.
\end{equation}
Similarly, we have $0\ast \zeta=\zeta$. Moreover, for $g\in \mathbf{L}$ with $g(x)=1-x$ ($x\in I$),
we have
\begin{align*}
0.5&=g(0.5)=(g\curlyvee \bm{1}_{\{1\}})(0.5)\\
& =\sup\{g(y)\ast \bm{1}_{\{1\}}(z)\mid y\triangledown z=0.5\}\\
& \geq g(0) \ast \bm{1}_{\{1\}}(0.5)\ (\text{as } 0\triangledown 0.5=0.5)\\
& =g(0)\ast 0=1 \ (\text{by } \eqref{QST-4}),
\end{align*}
which is a contradiction.

\medskip

(2) Similarly as in the previous case we can prove that $\curlywedge$ is
  not a $t$-conorm on $\mathbf{L}$.
\end{proof}

Theorems~\ref{tr-norm-theorem} and \ref{not-norm} imply the following result.

\begin{corollary}\label{Thm-31}
The $t_{r}$-norm $\hollowstar$ cannot be obtained by $\curlyvee$.
\end{corollary}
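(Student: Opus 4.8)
The plan is to deduce the statement directly by contradiction from the two cited results, exploiting a neutral-element obstruction. I would begin by spelling out what ``obtained by $\curlyvee$'' means: that there exist a $t$-conorm $\triangledown$ on $I$ and a binary operation $\ast$ on $I$ for which the operation $\curlyvee$ given by \eqref{O-2} coincides with $\hollowstar$, i.e. $f\curlyvee g=f\hollowstar g$ for all $f,g\in\mathbf{L}$. Assuming such a $\triangledown$ and $\ast$ exist, the goal is to derive a contradiction.

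The key step is to observe that $\hollowstar$ is already known to be a $t_{r}$-norm on $\mathbf{L}$ by Theorem~\ref{tr-norm-theorem}; in particular it satisfies the basic axioms (O1)--(O4), so it is a $t$-norm on $\mathbf{L}$ in the sense of Definition~\ref{Def-1} and, crucially, it has $\bm{1}_{\{1\}}$ as neutral element via (O3). If the identification $f\curlyvee g=f\hollowstar g$ held on all of $\mathbf{L}$, then the operation $\curlyvee$ defined by \eqref{O-2} would inherit these properties and hence would itself be a $t$-norm on $\mathbf{L}$. This is exactly what Theorem~\ref{not-norm}(1) forbids, so the assumption is untenable and $\hollowstar$ cannot be written in the form $\curlyvee$.

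Since all of the analytic work has been carried out in establishing Theorems~\ref{tr-norm-theorem} and \ref{not-norm}, I do not expect any genuine obstacle here; the only point demanding care is bookkeeping, namely confirming that ``being a $t_{r}$-norm'' entails ``being a $t$-norm on $\mathbf{L}$'' in precisely the sense negated by Theorem~\ref{not-norm}(1), that is, possessing $\bm{1}_{\{1\}}$ as neutral element, so that the contradiction is clean. One could alternatively render the argument fully self-contained by evaluating both sides at a witness pair, mirroring \eqref{QST-4} in the proof of Theorem~\ref{not-norm}, but the abstract comparison through the neutral element is the shortest route.
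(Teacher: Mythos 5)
Your argument is exactly the paper's: the corollary is stated as an immediate consequence of Theorem~\ref{tr-norm-theorem} (so $\hollowstar$ satisfies the basic axioms, i.e.\ is a $t$-norm on $\mathbf{L}$ with neutral element $\bm{1}_{\{1\}}$) and Theorem~\ref{not-norm}(1) (no operation of the form $\curlyvee$ is such a $t$-norm), giving the same contradiction you describe. The bookkeeping point you flag is indeed the only thing to check, and it holds by Definition~\ref{Def-1}.
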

}

\begin{remark}
{\color{blue}Theorems~\ref{tr-norm-theorem} and \ref{Thm-30}, and Corollary~\ref{Thm-31} show} that
there exists a $t_{r}$-norm $\hollowstar$ on $\mathbf{L}$, which cannot be obtained using the
formulas defining the operations $\curlywedge$ and $\curlyvee$. This gives a positive answer
to Question~\ref{Q-1}.
\end{remark}

\section{A $t_r$-conorm that is not obtained by $\curlywedge$ and $\curlyvee$}\label{S-VI}
{\color{blue}This section introduces the dual operation for every binary operation on $\mathbf{M}$ and proves
the duality between $t_{r}$-norm and $t_{r}$-conorm, also between $\curlywedge$ and $\curlyvee$.
As an application, a $t_{r}$-conorm on $\mathbf{L}$, which cannot be obtained by $\curlywedge$
and $\curlyvee$, is obtained.}

\begin{definition}\label{Def-Com}
Let $\lozenge$ be a binary operation on $\mathbf{M}$. Define the {\it dual operation}
$\lozenge^{\complement}$ of $\lozenge$ as follows: for $f, g\in \mathbf{M}$,
$$
f \lozenge^{\complement}g=\neg ((\neg f) \lozenge (\neg g)).
$$
\end{definition}

\begin{proposition}\label{Dual-Prop}
For a binary operation $\lozenge$ on $\mathbf{M}$, $(\lozenge^{\complement})^{\complement}=\lozenge$.
\end{proposition}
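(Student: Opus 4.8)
The plan is to reduce everything to the single fact that the complementation $\neg$ is an involution on $\mathbf{M}$, after which the claim unwinds purely formally from the definition of the dual operation.

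First I would establish that $\neg(\neg f)=f$ for every $f\in\mathbf{M}$. This is immediate from the defining formula $(\neg f)(x)=f(1-x)$ in Definition~\ref{Def-7}: for any $x\in I$,
$$
(\neg(\neg f))(x)=(\neg f)(1-x)=f\bigl(1-(1-x)\bigr)=f(x),
$$
so $\neg$ is its own inverse.

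Next I would simply apply the definition of $\lozenge^{\complement}$ twice. Fix $f,g\in\mathbf{M}$. By Definition~\ref{Def-Com} applied to the operation $\lozenge^{\complement}$,
$$
f\,(\lozenge^{\complement})^{\complement}\,g=\neg\bigl((\neg f)\,\lozenge^{\complement}\,(\neg g)\bigr).
$$
Applying Definition~\ref{Def-Com} once more to the inner occurrence, now for the arguments $\neg f$ and $\neg g$, gives
$$
(\neg f)\,\lozenge^{\complement}\,(\neg g)=\neg\bigl((\neg(\neg f))\,\lozenge\,(\neg(\neg g))\bigr)=\neg\bigl(f\,\lozenge\,g\bigr),
$$
where the second equality uses the involution property just established. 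Substituting back and using the involution property one final time yields
$$
f\,(\lozenge^{\complement})^{\complement}\,g=\neg\bigl(\neg(f\,\lozenge\,g)\bigr)=f\,\lozenge\,g.
$$

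Since $f,g\in\mathbf{M}$ were arbitrary, this shows $(\lozenge^{\complement})^{\complement}=\lozenge$. There is no real obstacle here: the only substantive ingredient is the involutivity of $\neg$, and the rest is a mechanical double application of the definition. The one point worth stating explicitly in the write-up is the involution identity itself, since everything else hinges on it.
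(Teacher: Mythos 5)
Your proof is correct and follows essentially the same route as the paper's: a double application of Definition~\ref{Def-Com} combined with the involutivity of $\neg$. The only difference is that you spell out the identity $\neg(\neg f)=f$ explicitly, which the paper uses silently.
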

\begin{proof}
For $f, g\in \mathbf{M}$, from Definition~\ref{Def-Com}, it follows that
$f(\lozenge^{\complement})^{\complement} g=\neg((\neg f)\lozenge^{\complement} (\neg g))=
\neg (\neg (f\lozenge g))=f \lozenge g$.
\end{proof}

\begin{theorem}\label{Dual-Thm}
Let $\lozenge$ be a binary operation on $\mathbf{M}$ such that $\lozenge(\mathbf{L}^2)\subset \mathbf{L}$,
{\color{blue}i.e., $\lozenge$ is closed on $\mathbf{L}$.}
Then, $\lozenge|_{\mathbf{L}^2}$ is a $t_{r}$-norm (resp., $t$-norm) on $\mathbf{L}$ if and only if
$\lozenge^{\complement}|_{\mathbf{L}^2}$ is a $t_{r}$-conorm (resp., $t$-conorm) on $\mathbf{L}$.
\end{theorem}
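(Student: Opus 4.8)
The plan is to treat the complementation $\neg$ of Definition~\ref{Def-7} as an order-reversing involution on $\mathbf{L}$ and to transport each defining axiom of a $t_r$-norm through the identity $f\lozenge^{\complement}g=\neg((\neg f)\lozenge(\neg g))$. First I would collect the elementary behaviour of $\neg$. Since $(\neg f)(x)=f(1-x)$, one has $\neg(\neg f)=f$, and reflection about $x=\tfrac12$ preserves normality and convexity, so $\neg$ is an involutive bijection of $\mathbf{L}$. Moreover $\neg\bm{1}_{\{0\}}=\bm{1}_{\{1\}}$, $\neg\bm{1}_{\{1\}}=\bm{1}_{\{0\}}$, and $\neg\bm{1}_{[a,b]}=\bm{1}_{[1-b,\,1-a]}$; hence $\neg$ maps $\mathbf{J}$ onto $\mathbf{J}$ and $\mathbf{K}$ onto $\mathbf{K}$. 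Together with the standing hypothesis that $\lozenge$ is closed on $\mathbf{L}$, these facts show at once that $\lozenge^{\complement}$ is closed on $\mathbf{L}$, and that if $\lozenge$ is closed on $\mathbf{J}$ (resp.\ $\mathbf{K}$) then so is $\lozenge^{\complement}$; this already disposes of axioms (O6) and (O7).

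The one substantive preliminary is that $\neg$ reverses the order $\sqsubseteq$, that is, $f\sqsubseteq g$ if and only if $\neg g\sqsubseteq\neg f$. I would derive this from Theorem~\ref{order-theorem} and Proposition~\ref{Main-Prop}(3): the condition $f\sqsubseteq g$ reads $f^{L}\geq g^{L}$ and $f^{R}\leq g^{R}$, and applying the pointwise map $\neg$ (which preserves pointwise inequalities) together with $(\neg f)^{L}=\neg(f^{R})$ and $(\neg f)^{R}=\neg(f^{L})$ turns these into $(\neg g)^{L}\geq(\neg f)^{L}$ and $(\neg g)^{R}\leq(\neg f)^{R}$, which is exactly $\neg g\sqsubseteq\neg f$.

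With these tools the remaining axioms follow by short computations, using $\neg(f\lozenge^{\complement}g)=(\neg f)\lozenge(\neg g)$ throughout. Commutativity (O1) and associativity (O2) pass directly from $\lozenge$ to $\lozenge^{\complement}$ by the involution. The neutral element converts (O3) into (O3'), since $f\lozenge^{\complement}\bm{1}_{\{0\}}=\neg((\neg f)\lozenge\bm{1}_{\{1\}})=\neg(\neg f)=f$. Monotonicity (O4) is preserved because $\neg$ reverses $\sqsubseteq$ twice: from $f\sqsubseteq g$ one gets $\neg g\sqsubseteq\neg f$, then (O4) for $\lozenge$ gives $(\neg g)\lozenge(\neg h)\sqsubseteq(\neg f)\lozenge(\neg h)$, and a final application of $\neg$ yields $f\lozenge^{\complement}h\sqsubseteq g\lozenge^{\complement}h$. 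Finally (O5) converts into (O5') using $\neg\bm{1}_{[0,1]}=\bm{1}_{[0,1]}$, $\neg\bm{1}_{[a,b]}=\bm{1}_{[1-b,1-a]}$, and $\neg\bm{1}_{[0,1-a]}=\bm{1}_{[a,1]}$, so that $\bm{1}_{[0,1]}\lozenge^{\complement}\bm{1}_{[a,b]}=\neg(\bm{1}_{[0,1]}\lozenge\bm{1}_{[1-b,1-a]})=\neg\bm{1}_{[0,1-a]}=\bm{1}_{[a,1]}$.

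This proves that a $t_r$-norm $\lozenge$ yields a $t_r$-conorm $\lozenge^{\complement}$; the symmetric computation (interchanging the roles of (O3)/(O5) with (O3')/(O5')) shows that a $t_r$-conorm $\lozenge$ yields a $t_r$-norm $\lozenge^{\complement}$. Combining the two implications with $(\lozenge^{\complement})^{\complement}=\lozenge$ from Proposition~\ref{Dual-Prop} gives the stated equivalence, and restricting all computations to the basic axioms (O1), (O2), (O3)/(O3'), (O4) yields the parenthetical $t$-norm/$t$-conorm version. The only real obstacle is the order-reversal of $\sqsubseteq$ under $\neg$; once that is in place, every axiom is mechanical.
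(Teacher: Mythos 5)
Your proposal is correct and follows essentially the same route as the paper: reduce via the involution $(\lozenge^{\complement})^{\complement}=\lozenge$, establish that $\neg$ reverses $\sqsubseteq$ using Proposition~\ref{Main-Prop}(3) together with Theorem~\ref{order-theorem}, and then transport each axiom mechanically, with (O3)/(O5) turning into (O3')/(O5') and the $\mathbf{J}$-, $\mathbf{K}$-closure passing through because $\neg$ preserves these families. Your explicit remark that the converse implication requires the symmetric computation is, if anything, slightly more careful than the paper's one-line reduction.
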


\begin{proof}
Clearly,
$\lozenge^{\complement}$ is closed on $\mathbf{L}$. By Proposition~\ref{Dual-Prop}, it suffices to
show that $\lozenge^{\complement}|_{\mathbf{L}^2}$
is a $t_{r}$-conorm provided that $\lozenge|_{\mathbf{L}^2}$ is a $t_{r}$-norm.
\begin{enumerate}[(i)]
\item $\lozenge^{\complement}$ satisfies (O1).

For $f, g\in \mathbf{L}$, since $\lozenge$ satisfies (O1), we have
$f\lozenge^{\complement} g=\neg ((\neg f)\lozenge (\neg g))=\neg ((\neg g)\lozenge (\neg f))
=g\lozenge^{\complement} f$.

\item $\lozenge^{\complement}$ satisfies (O2).

For $f, g, h\in \mathbf{L}$, {\color{blue}we get}
$$(f\lozenge^{\complement} g) \lozenge^{\complement} h
=\neg ((\neg(f \lozenge^{\complement} g)) \lozenge (\neg h))
=\neg ( ((\neg f) \lozenge (\neg g)) \lozenge (\neg h)),
$$
and
$$
f\lozenge^{\complement} (g \lozenge^{\complement} h)
=\neg((\neg f)\lozenge (\neg (g\lozenge^{\complement} h)))=
\neg ((\neg f) \lozenge ((\neg g)\lozenge (\neg h))),
$$
{\color{blue}and the associativity of $\lozenge^{\complement}$
follows from the associativity of $\lozenge$.}

\item $\lozenge^{\complement}$ satisfies (O3').

For $f\in \mathbf{L}$, since $\bm{1}_{\{1\}}$ is the neutral element of $\lozenge$,
we have
$$
f\lozenge^{\complement} \bm{1}_{\{0\}}=\neg ((\neg f) \lozenge (\neg \bm{1}_{\{0\}}))
=\neg ((\neg f)\lozenge \bm{1}_{\{1\}})=\neg (\neg f)=f.
$$

\item $\lozenge^{\complement}$ satisfies (O4).

{\color{blue}For $f, g, h\in \mathbf{L}$ with $f\sqsubseteq g$, Proposition~\ref{Main-Prop} and
Theorem~\ref{order-theorem} yield
$$
(\neg f)^{L}(x)=(\neg (f^{R}))(x)
=f^{R}(1-x)\leq g^{R}(1-x)=(\neg (g^{R}))(x)=(\neg g)^{L}(x),
$$
and
$$
(\neg f)^{R}(x)=(\neg (f^{L}))(x)
=f^{L}(1-x)\geq g^{L}(1-x)=(\neg (g^{L}))(x)=(\neg g)^{R}(x).
$$
Applying again Theorem~\ref{order-theorem}, we obtain $\neg g\sqsubseteq \neg f$.}
Since $\lozenge$ satisfies (O4), we have
$$
(\neg g)\lozenge (\neg h)\sqsubseteq (\neg f)\lozenge (\neg h).
$$
Thus,
$$
f\lozenge^{\complement} h=\neg ((\neg f)\lozenge (\neg h))
\sqsubseteq \neg ((\neg g)\lozenge (\neg h))=g\lozenge^{\complement} h.
$$

\item $\lozenge^{\complement}$ satisfies (O5').

Since $\lozenge$ satisfies (O5), it follows that
\begin{align*}
&\quad\bm{1}_{[0, 1]}\lozenge^{\complement} \bm{1}_{[a, b]}\\
&=\neg ((\neg \bm{1}_{[0, 1]})\lozenge (\neg \bm{1}_{[a, b]}))\\
&=\neg (\bm{1}_{[0, 1]}\lozenge \bm{1}_{[1-b, 1-a]})\\
&=\neg \bm{1}_{[0, 1-a]}=\bm{1}_{[a, 1]}.
\end{align*}

\item $\lozenge^{\complement}$ satisfies (O6).

{\color{blue}For $x_1, x_2\in I$, since $\lozenge$ satisfies (O6), then there exists $x_{3}\in I$
such that $\bm{1}_{\{1-x_1\}}\lozenge \bm{1}_{\{1-x_2\}}=\bm{1}_{\{x_3\}}$, implying that
\begin{align*}
&\quad\bm{1}_{\{x_1\}}\lozenge^{\complement} \bm{1}_{\{x_2\}}\\
&=\neg ((\neg \bm{1}_{\{x_1\}})\lozenge (\neg \bm{1}_{\{x_2\}}))\\
&=\neg (\bm{1}_{\{1-x_1\}}\lozenge \bm{1}_{\{1-x_2\}})\\
&=\neg \bm{1}_{\{x_3\}}=\bm{1}_{\{1-x_3\}}\in \mathbf{J}.
\end{align*}
}

\item $\lozenge^{\complement}$ satisfies (O7).

{\color{blue}For $[a_1, b_1]$, $[a_2, b_2]\subset I$, since $\lozenge$ satisfies (O7), then
there exist $[a_3, b_3]\subset I$ such that $\bm{1}_{[1-b_1, 1-a_1]}\lozenge \bm{1}_{[1-b_2, 1-a_2]}
=\bm{1}_{[a_3, b_3]}$, implying that
\begin{align*}
&\quad\bm{1}_{[a_1, b_1]}\lozenge^{\complement} \bm{1}_{[a_2, b_2]}\\
&=\neg ((\neg \bm{1}_{[a_1, b_1]})\lozenge (\neg \bm{1}_{[a_2, b_2]}))\\
&=\neg (\bm{1}_{[1-b_1, 1-a_1]}\lozenge \bm{1}_{[1-b_2, 1-a_2]})\\
&=\neg \bm{1}_{[a_3, b_3]}=\bm{1}_{[1-b_3, 1-a_3]}\in \mathbf{K}.
\end{align*}
}
\end{enumerate}
\end{proof}

{\color{blue}\begin{theorem}\label{Dual-obtain}
A binary operation on $\mathbf{M}$ is obtained by $\curlywedge$ if and only if its dual operation
is obtained by $\curlyvee$.
\end{theorem}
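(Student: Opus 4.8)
The plan is to unwind the definition of the dual operation and reduce the equivalence to the elementary duality between $t$-norms and $t$-conorms under the standard negation $N(x)=1-x$. First I would make precise what ``obtained by $\curlywedge$'' (resp. ``obtained by $\curlyvee$'') means: a binary operation $\lozenge$ on $\mathbf{M}$ is \emph{obtained by $\curlywedge$} if there exist a $t$-norm $\vartriangle$ on $I$ and a binary operation $\ast$ on $I$ such that for all $f,g\in\mathbf{M}$ and $x\in I$,
$$
(f\lozenge g)(x)=\sup\{f(y)\ast g(z)\mid y\vartriangle z=x\},
$$
and analogously for $\curlyvee$ with a $t$-conorm $\triangledown$ in place of $\vartriangle$. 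I would also recall the classical fact that if $\vartriangle$ is a $t$-norm then $u\triangledown v=1-((1-u)\vartriangle(1-v))$ defines a $t$-conorm, that the dual of a $t$-conorm is defined symmetrically, and that this correspondence is an involutive bijection between the $t$-norms and the $t$-conorms on $I$.

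The heart of the argument is a single change-of-variables computation. Assuming $\lozenge$ is obtained by $\curlywedge$ via $\vartriangle$ and $\ast$, I would compute, for $f,g\in\mathbf{M}$ and $x\in I$, using $(\neg h)(x)=h(1-x)$:
\begin{align*}
(f\lozenge^{\complement} g)(x)
&=((\neg f)\lozenge(\neg g))(1-x)\\
&=\sup\{(\neg f)(y)\ast(\neg g)(z)\mid y\vartriangle z=1-x\}\\
&=\sup\{f(1-y)\ast g(1-z)\mid y\vartriangle z=1-x\}.
\end{align*}
Substituting $u=1-y$, $v=1-z$, the constraint $y\vartriangle z=1-x$ becomes $(1-u)\vartriangle(1-v)=1-x$, i.e.\ $u\triangledown v=x$ with $\triangledown$ the dual $t$-conorm above. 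Hence
$$
(f\lozenge^{\complement} g)(x)=\sup\{f(u)\ast g(v)\mid u\triangledown v=x\},
$$
which is exactly the formula defining $\curlyvee$ with the $t$-conorm $\triangledown$ and the \emph{same} binary operation $\ast$. Thus $\lozenge^{\complement}$ is obtained by $\curlyvee$, which is the forward implication.

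For the converse I would first record the symmetric computation: if an operation is obtained by $\curlyvee$ via a $t$-conorm $\triangledown$ and $\ast$, then the identical substitution, now using the dual $t$-norm $\vartriangle$ of $\triangledown$, shows that its dual is obtained by $\curlywedge$ (with the same $\ast$). Then, if $\lozenge^{\complement}$ is obtained by $\curlyvee$, applying this to $\lozenge^{\complement}$ yields that $(\lozenge^{\complement})^{\complement}$ is obtained by $\curlywedge$; by Proposition~\ref{Dual-Prop}, $(\lozenge^{\complement})^{\complement}=\lozenge$, so $\lozenge$ is obtained by $\curlywedge$.

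The main obstacle I expect is purely bookkeeping rather than conceptual: making the substitution in the supremum rigorous (checking that the map $(y,z)\mapsto(1-y,1-z)$ is a bijection carrying the constraint $y\vartriangle z=1-x$ exactly onto $u\triangledown v=x$, so that the two suprema range over identical sets of values), and confirming cleanly that $\triangledown$ really is a $t$-conorm and that passing to duals twice returns the original $t$-norm. None of these steps is deep, but they must be stated carefully so that the \emph{same} $\ast$ can be reused on both sides and no $t$-norm or $t$-conorm axiom is silently lost in the translation.
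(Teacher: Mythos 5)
Your proposal is correct and follows essentially the same route as the paper: the same unwinding of $\neg$, the same substitution $u=1-y$, $v=1-z$ converting the constraint $y\vartriangle z=1-x$ into $u\triangledown z=x$ for the dual $t$-conorm $\triangledown$, with the same $\ast$ retained, and Proposition~\ref{Dual-Prop} used to dispose of the converse. You are in fact slightly more explicit than the paper about the backward direction (the paper simply invokes Proposition~\ref{Dual-Prop} to reduce to necessity, while you spell out the symmetric computation), but this is a matter of presentation, not of substance.
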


\begin{proof}
By Proposition~\ref{Dual-Prop}, it suffices to prove the necessity. Assume that $\curlywedge$
is a binary operation satisfying that there exist a binary operation $\ast$ on $I$ and a
$t$-norm $\vartriangle$ on $I$ such that, for $f, g\in \mathbf{M}$,
$(f\curlywedge g)(x)=\sup\{f(y)\ast g(z)\mid y\vartriangle z=x\}$. Take $\triangledown: I^{2}\to I$
as $x \triangledown y=1-(1-x)$$\vartriangle$$(1-y)$ for any $(x, y)\in I^{2}$. Clearly, $\triangledown$
is a $t$-conorm on $I$. For any $f, g\in \mathbf{M}$ and $x\in I$, we have
\begin{align*}
(f\curlywedge^{\complement}g)(x)&=(\neg ((\neg f)\curlywedge (\neg g)))(x)\\
&=((\neg f)\curlywedge (\neg g))(1-x)\\
&=\sup\{(\neg f)(y)\ast (\neg g)(z)\mid y\vartriangle z=1-x\}\\
&=\sup\{f(1-y)\ast g(1-z)\mid y\vartriangle z=1-x\}\\
&=\sup\{f(y)\ast g(z)\mid y\triangledown z=x\},
\end{align*}
implying that $\curlywedge^{\complement}$ can be obtained by $\curlyvee$.
\end{proof}

Theorems~\ref{tr-norm-theorem}, \ref{Dual-Thm}, and \ref{Dual-obtain} imply the following result.

\begin{corollary}\label{Tr-conorm-Thm}
Let $\fivestar=\hollowstar^{\complement}$. Then,
\begin{itemize}
  \item[{\rm (1)}] $\fivestar$ is
a $t_{r}$-conorm on $\mathbf{L}$;
  \item[{\rm (2)}] $\fivestar$ cannot be obtained by $\curlywedge$ and $\curlyvee$.
\end{itemize}
\end{corollary}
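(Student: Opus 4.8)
The plan is to obtain both assertions as formal consequences of the duality machinery already developed, with essentially no new computation. For part~(1), I would first recall that $\hollowstar$ is closed on $\mathbf{L}$ (Proposition~\ref{Closed-Lemma}, equivalently Remark~\ref{R-24}~(i)) and is a $t_{r}$-norm on $\mathbf{L}$ by Theorem~\ref{tr-norm-theorem}. Since $\fivestar$ is defined precisely as the dual operation $\hollowstar^{\complement}$, I would apply Theorem~\ref{Dual-Thm} with $\lozenge=\hollowstar$: the hypothesis that $\lozenge$ be closed on $\mathbf{L}$ is met, and the ``only if'' direction of that theorem then yields directly that $\hollowstar^{\complement}|_{\mathbf{L}^2}=\fivestar$ is a $t_{r}$-conorm on $\mathbf{L}$. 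This settles~(1).

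For part~(2), I would argue each of the two impossibilities separately, using Theorem~\ref{Dual-obtain} together with the involution $(\lozenge^{\complement})^{\complement}=\lozenge$ from Proposition~\ref{Dual-Prop}. To see that $\fivestar$ cannot be obtained by $\curlyvee$, I would apply Theorem~\ref{Dual-obtain} to the operation $\hollowstar$: it states that $\hollowstar$ is obtained by $\curlywedge$ if and only if its dual $\hollowstar^{\complement}=\fivestar$ is obtained by $\curlyvee$. Since Theorem~\ref{Thm-30} shows $\hollowstar$ is \emph{not} obtained by $\curlywedge$, the contrapositive forces $\fivestar$ not to be obtained by $\curlyvee$.

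To see that $\fivestar$ cannot be obtained by $\curlywedge$, I would apply Theorem~\ref{Dual-obtain} to the operation $\fivestar$ itself: $\fivestar$ is obtained by $\curlywedge$ if and only if its dual $\fivestar^{\complement}$ is obtained by $\curlyvee$. By Proposition~\ref{Dual-Prop}, $\fivestar^{\complement}=(\hollowstar^{\complement})^{\complement}=\hollowstar$, so this says $\fivestar$ is obtained by $\curlywedge$ if and only if $\hollowstar$ is obtained by $\curlyvee$. Corollary~\ref{Thm-31} shows $\hollowstar$ is \emph{not} obtained by $\curlyvee$, hence $\fivestar$ is \emph{not} obtained by $\curlywedge$. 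Combining the two impossibilities gives~(2).

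The whole argument is a bookkeeping assembly of prior results, so there is no genuine analytic obstacle; the only point requiring care is tracking which direction of the biconditional in Theorem~\ref{Dual-obtain} is being invoked, and correctly using the involution of Proposition~\ref{Dual-Prop} to rewrite $\fivestar^{\complement}$ as $\hollowstar$. As long as the two dualities (of the $t_{r}$-structure and of ``being obtained by'') are applied to the correct operation, the conclusion follows at once.
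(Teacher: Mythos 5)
Your proposal is correct and follows exactly the route the paper intends: the paper's own proof is just the one-line citation of Theorems~\ref{tr-norm-theorem}, \ref{Dual-Thm}, and \ref{Dual-obtain}, and your write-up supplies the same deduction in detail, including the two applications of Theorem~\ref{Dual-obtain} (to $\hollowstar$ and to $\fivestar$ via Proposition~\ref{Dual-Prop}) together with Theorem~\ref{Thm-30} and Corollary~\ref{Thm-31} for part~(2). No gaps; the directions of the biconditionals are tracked correctly.
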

}

\section{Conclusion}
Employing the functions $f^{L}$ and $f^{R}$, we have constructed in this paper two binary operations
$\hollowstar$ and $\fivestar=\hollowstar^{\complement}$ on $\mathbf{L}$ (see Definitions
\ref{bingstar-operation} and \ref{Def-Com}) and proved that $\hollowstar$ is a $t_r$-norm
on $\mathbf{L}$ and $\fivestar$
is a $t_{r}$-conorm on $\mathbf{L}$. Both $\hollowstar$ and $\fivestar$ cannot be obtained by
using the formulas defining the operations $\curlywedge$ and $\curlyvee$. These results give
a positive answer to an open problem (see Question~\ref{Q-1}) in \cite{HCT2015}. Combining
this result with our main results in \cite{WC-TFS}, the two open problems posed by
Hern\'{a}ndez et al. \cite{HCT2015} are completely solved.

\section*{Acknowledgements}

The authors express their sincere thanks to the editors and reviewers for
their most valuable comments and suggestions in improving this paper greatly.

\section*{References}

\end{document}